%% file: dyadic2026.tex
\documentclass[11pt,reqno]{amsart} % for arxiv-version
\usepackage{graphicx} % Required for inserting images
\usepackage{amsmath}
\usepackage{amssymb}
\usepackage{amsthm}
\usepackage{hyperref}
\usepackage{geometry}
\usepackage[all]{xy}
\usepackage{tikz}
\usetikzlibrary{decorations.pathreplacing, arrows.meta}

\newtheorem{thm}{Theorem}[section]
\newtheorem{prop}[thm]{Proposition}
\newtheorem{lemma}[thm]{Lemma}
\newtheorem{defi}[thm]{Definition}
\newtheorem{example}[thm]{Example}

\newtheorem{cor}[thm]{Corollary}

\newcommand{\PP}{\mathbb{P}}
\newcommand{\EE}{\mathbb{E}}

\newcommand{\RR}{\mathbb{R}}

\newcommand{\len}{\mathord{\ell}}

\title{Longest increasing subsequences of dyadic-type chaotic orbits}
\author[Shinsuke Iwao]{Shinsuke Iwao}
\address{Faculty of Business and Commerce, Keio University, Kanagawa 223-8521, Japan}
\email{iwao-s@keio.jp}

\author[Fumihiko Nakamura]{Fumihiko Nakamura}
\address{Faculty of Engineering, Kitami Institute of Technology, Hokkaido, 090-8507, JAPAN}
\email{nfumihiko@mail.kitami-it.ac.jp}

\author[Yushi Nakano]{Yushi Nakano}
\address{Faculty of Science, Hokkaido University, Hokkaido, 060-0810, Japan}
\email{yushi.nakano@math.sci.hokudai.ac.jp}

\subjclass[2020]{Primary 60C05; Secondary 37E05, 37B10, 05A05}
\keywords{Longest increasing subsequence; Ulam--Hammersley problem; doubling map; symbolic dynamics; random permutations; Young diagrams}

\begin{document}
\begin{abstract}
This paper studies the longest increasing subsequence (LIS) problem for sequences generated by dyadic-type chaotic interval maps.
Starting from a single point \(x\in[0,1)\) chosen uniformly at random, we form the order pattern of the first \(N\) points of its orbit, with the doubling map as the basic model.
Let \(\lambda_1^{(N)}\) be the LIS length, equivalently the length of the first row of the Young diagram obtained by Schensted's insertion.
We show that \(\mathbb E[\lambda_1^{(N)}]/\sqrt N\to 2\), matching the leading asymptotics in the classical Ulam--Hammersley problem for uniform random permutations.
\end{abstract}

\maketitle

\section{Introduction}
%\marginpar{\tiny YN: そもそも論文内でLISについてしか議論していないので、また歴史としてはUlam--Hammersleyの方がlimit shapeより古いので（当たり前ですが）、{\bf タイトルを変えました}。一方でyoungタブローの語を出した方がreaderが増えそうというメリットを消したくないので、可能な限り早い段階でyoungタブローの言葉をイントロに入れました}
This paper concerns the longest increasing subsequence (LIS) problem for sequences generated by dyadic-type chaotic interval maps.
In its classical form, the LIS problem asks how long an increasing subsequence one can find in a finite sequence.
For independent random variables with a continuous common distribution, the induced order pattern is a uniform random permutation; this is the classical Ulam--Hammersley problem.
Through Schensted's correspondence, the length of the LIS is also the length of the first row of the Young diagram associated with the permutation~\cite{Schensted1961}.
Thus the LIS problem is one of the most basic entry points to the asymptotic theory of random Young diagrams.
The Logan--Shepp and Vershik--Kerov theorems for the limit shape imply the first-order asymptotic $2\sqrt{N}$ for the LIS, and the Baik--Deift--Johansson theorem identifies the $N^{1/6}$-scale fluctuations with the Tracy--Widom distribution~\cite{LoganShepp1977,VershikKerov1977,BaikDeiftJohansson1999}.

Here we study an analogue of the Ulam--Hammersley problem generated by a deterministic orbit.
We choose a single initial point $x\in[0,1)$ uniformly at random and follow its orbit under a  map $f:[0,1)\to [0,1)$ in a dyadic symbolic class.
The basic model is the doubling map
%\marginpar{\tiny SI: $f(x)=2x\bmod 1$?}
\[
    f(x)=2x\bmod 1.
\]
The same symbolic viewpoint also covers the tent map, namely the piecewise linear map with slope $2$ on the left half of the interval and slope $-2$ on the right half.
%\marginpar{\tiny YN: the full quadratic map $Q(x)=4x(1-x)$についてのcorもどこかで言及: ただし、期待値の選択はLebではない: しかし、the aceipのdensityは一応Leb-integrableなので、（$L^\infty$-density away from 0だと上下で挟んですぐにLebに帰着できそうなので）$[N^{-1},N]$-cut off $h_N$でいい感じにLebに帰着できないか？}
%\marginpar{\tiny Furthermore, through the standard topological conjugacy with the tent map, the same conclusion also holds for the logistic map $Q(x)=4x(1-x)$ with respect to its absolutely continuous invariant probability measure. おそらく、Lebで$Q(x)=4x(1-x)$もいけそうで、スペースが欲しいのでremarkを書くと思います}
%\marginpar{\tiny YN: full branch quadratic mapの言及も入れる!}
We rank the first $N$ orbit points
$
    x,\; f(x),\; f^2(x),\ldots, f^{N-1}(x)$
and consider the LIS of the resulting order pattern.
Equivalently, by Schensted insertion, we study the first row of the Young diagram generated by this orbit.
The resulting sequence is not i.i.d., and its induced order pattern is not a uniform random permutation.
Thus, the problem does not fall under either the classical i.i.d.\ setting or the Plancherel framework.
Our main result shows that the expected LIS nevertheless has the same first-order asymptotics as in the Ulam--Hammersley problem:
if 
%$\lambda_1^{(N)}=
$\lambda_1^{(N)}(x)$ denotes the LIS of the first $N$ orbit points $\{f^n(x)\}_{n=0}^{N-1}$, then
\[
    \lim_{N\to\infty}
    \frac{\EE[\lambda_1^{(N)}]}{\sqrt{N}}
    =2.
\]
Here and throughout the paper, 
%unless otherwise stated, 
probabilities
and expectations of random variables determined by the initial point
\(x\) are taken with respect to Lebesgue measure on \([0,1)\).
%In particular,
%%$\EE[\cdot]$ is the integration with respect to the Lebesgue measure.
%$\EE[\lambda_1^{(N)}]=\int _0^1\lambda_1^{(N)}(x)\, dx$.

A natural first guess is that such a result should follow from the strong statistical properties of expanding maps.
For many expanding or hyperbolic systems, exponential decay of correlations, often combined with martingale or reverse-martingale decompositions of Gordin--Liverani type, provides a standard route to central limit theorems and related limit laws~\cite{Gordin1969,Liverani1996,Young1998,Young1999}.
However, this mechanism does not seem to control the LIS in any direct way.
The LIS is neither an additive observable nor a local statistic depending on a fixed number of orbit coordinates.
A maximal increasing subsequence may choose many possible sets of times, and even the comparison of two orbit points can require long, overlapping tails of the binary expansion of the same initial point.
Thus an i.i.d.-type approximation obtained only from exponential mixing is not by itself suited to the non-local optimization problem considered here.

The main technical idea is to use the dyadic symbolic structure more deeply than what is captured by abstract mixing estimates.
In the symbolic coordinate used in the proof, an initial point is written as
\[
    x=0.b_1b_2b_3\cdots,
\]
and the dynamics is the left shift; under the uniform initial distribution, the digits $b_i$ are independent Bernoulli variables.
The difficulty is that comparisons of $f^i(x)$ and $f^j(x)$ involve overlapping tails of this same binary sequence.
For the lower bound, we cut the binary expansion into blocks, search for prescribed finite words that force selected orbit points into specified dyadic intervals, and use the remaining tail digits as independent labels.
After collision events are excluded, the relative order of the selected orbit points agrees in distribution with that of an independent uniform sequence; together with the order-isomorphism statement for i.i.d.~samples, this reduces the selected subsequence to the uniform permutation model.
At the lower-bound level, the precise reduction is the combination of Proposition~\ref{prop:LIS_succ} with Proposition~\ref{prop:near_unif}, followed by the uniform-permutation estimate in Theorem~\ref{thm:known}; see Section~\ref{sec:lower_uniform_comparison}.
The upper-bound version appears in Section~\ref{sec:ev_small_block}, where the comparison is applied after thinning and leads to the estimate~\eqref{eq:ev_L_2}.

%The lower and upper bounds implement this comparison in complementary ways.
%\marginpar{\tiny YN: この段落はカットでもいい}
%For the lower bound, word-search and collision avoidance produce independent uniform order patterns inside suitable time blocks, and the resulting LIS contributions are concatenated using a variational decomposition.
%For the upper bound, short periodic binary words create clusters of nearby hits with overlapping tails; we therefore use a variable-length partition into regular and exceptional cylinders, thin the sequence by removing hits that are too close, and control the removed points by concentration estimates for Bernoulli sums.
%A union bound over all cylinders and time intervals gives the required high-probability estimate, which is converted into the expectation bound.
%%\marginpar{\tiny YN: いい塩梅にこの論文と興味の対象が似ていて、しかし安全に共通部分が小さい論文群を見つけたので、それを短く言及するかもしれません。ここに書かないにしてもお二人には後で論文名を共有します}

For a different perspective based on the full distributions of order patterns generated by interval maps, see Abrams et al.~\cite{AbramsEtAl2013}.

The paper is organized as follows.
Section~2 states the main results, introduces the pattern construction
and the class of admissible orders.
%, and explains how the tent map fits into the same framework via Gray coding.
Sections~3 and~4 prove the lower and upper bounds, respectively.
%The appendix records the concentration inequality used in the proof.
%\marginpar{\tiny YN: Appendixを復活させないならここの記述を変更}

\section{The problem}

\subsection{Setting}

Let \( N \) be a positive integer, and let \( S_N \) denote the symmetric group.  
For a sequence \( a = (a_1, a_2, \dots, a_N)\in \RR^N \), we define \( \mathrm{Pat}(a) \) to be the sequence obtained by replacing the entries $a_1,a_2,\dots,a_N$ with \( 1,2,\dots,N \) in increasing order. (The smallest entry of \( a \) is replaced by \( 1 \), the next smallest by \( 2 \), and so on.)
In the case where some entries of \( a_1,\dots,a_N \) are equal, the one appearing earlier is regarded as smaller.
The sequence \( \mathrm{Pat}(a) \) can be regarded as an element of $S_N$ by identifying it with the one-line notation of a permutation.

Let \( x \in [0,1) \) be a random variable distributed uniformly.
Set $a_1:=x$ and define the sequence \( a_1, a_2, a_3, \dots \in [0,1) \) by the recursive equation $a_{n+1}=2a_n\bmod{1}$.
For the first \( N \) terms of this sequence \( a^{(N)} = (a_1, \dots, a_N) \), we set
\begin{equation}\label{eq:def_of_pi}
\pi^{(N)} := \mathrm{Pat}(a^{(N)}).
\end{equation}
Let \( \lambda^{(N)} \) be the Young diagram obtained by applying Schensted's row-insertion algorithm (see, for example \cite{Hulton}) to \( \pi^{(N)} \). 
Denoting by \( \lambda_i^{(N)} \) the number of boxes contained in the $i$-th row of \( \lambda^{(N)} \),
each \( \lambda_i^{(N)} \) becomes a random variable depending on the value of \( x \).

It is known \cite[\S 3]{Hulton} that \( \lambda_1^{(N)} \) is equal to the length of a longest increasing subsequence contained in \( a_1, a_2, \dots, a_N \).
%In what follows, we use LIS as an abbreviation for ``longest increasing subsequence''.

\subsection{Main Theorem}

We write the binary expansion of an element in \( [0,1) \) as
\[
0.b_1 b_2 b_3 \ldots \qquad (b_i \in \{0,1\}).
\]
By excluding binary sequences whose tails consist entirely of 1's, the binary expansion becomes unique.

Let $f : [0,1) \to [0,1)$ be the \textit{doubling map} given by
\begin{equation}\label{eq:double_map}
f(x)=2x \bmod{1}.
\end{equation}
In terms of binary expansions, the map \( f \) corresponds to the \textit{shift map}
\begin{equation}\label{eq:def_shift}
0.b_n b_{n+1} b_{n+2} \dots \mapsto 0.b_{n+1} b_{n+2} b_{n+3} \dots.
\end{equation}

We can extend the situation to more general settings.
Let \(\succ\) be a total order on \([0,1)\) satisfying the following property:
\begin{enumerate}
\item[($\ast$)]
For sequences \(c_1,c_2,c_3,\dots\) and \(d_1,d_2,d_3,\dots\), if there exists \(N\) such that
\(c_i = d_i\) for any \(i < N\) and \(c_N \neq d_N\), we have
\[
0.c_1 c_2 \dots c_N \succ 0.d_1 d_2 \dots d_N
\;\;\Longleftrightarrow\;\;
0.c_1 c_2 c_3 \ldots \succ 0.d_1 d_2 d_3 \dots.
\]
In other words, the order between the two numbers is determined only by their first $N$ digits.
\end{enumerate}

Let $\Omega:=\{0,1\}^{\mathbb N}$ be the set of $01$-sequences equipped with the Bernoulli product measure $\nu:=\left(\frac12,\frac12\right)^{\otimes\mathbb N}$. 
Consider the \textit{binary coding map}
\[
\widetilde{\beta}:\Omega\to [0,1],\qquad 
\widetilde{\beta}(c):=\sum_{j=1}^{\infty}c_j2^{-j}
\]
and the exceptional set
\[
\mathfrak z
:=
\left\{
(c_1,c_2,\dots)\in\Omega:
\text{\(c_n=1\) for all sufficiently large \(n\)}
\right\}.
\]
Restricting $\widetilde{\beta}$ to $\Omega\setminus
\mathfrak z$, we obtain the bijection $\beta:\Omega\setminus\mathfrak z\longrightarrow[0,1)$.
Since \(\mathfrak z\) is countable, $\beta$ is a measure-preserving map.

For a finite binary word \(w\), let
\[
[w]
:=
\{c\in\Omega:\text{\(c\) begins with \(w\)}\}
\]
be the \textit{cylinder set} corresponding to $w$.
Its image 
\begin{equation}\label{eq:def_of_cyl}
C_w:=
\beta([w]\setminus\mathfrak z)=
\{x\in[0,1):\text{the binary expansion of \(x\) begins with \(w\)}\}
\end{equation}
under $\beta$ is also referred to as a \textit{cylinder set}.

For subsets \(A,B\subset[0,1)\), write \(A\prec B\) if
\(x\prec y\) for every \(x\in A\) and \(y\in B\).
Then, the property \((\ast)\) can be rephrased as follows:
\begin{equation}\label{eq:property_one}
\begin{aligned}
&\text{For any two distinct finite words \(w\) and \(w'\) of the same lengths},\\
&\text{exactly one of
$C_{w}\prec C_{w'}$ and $C_{w'}\prec C_{w}$
holds}.
% \\ 
% &\text{where $ua$ $(a\in \{0,1\})$ is the concatenation of the word $u$ and the letter $a$.}  
\end{aligned}
\end{equation}

For a finite word $w$, an \textit{extension of $w$} is a word that begins with $w$.
\begin{lemma}\label{lemma:cylinder_property}
Let $\succ$ be a total order on $[0,1)$ satisfying \((\ast)\).
Then, for any finite words $w$ and $w'$, the following (i)--(ii) hold:
\begin{itemize}
\item[(i)] If $C_w\cap C_{w'}\neq \emptyset$, one of $w$ and $w'$ is an extension of the other.
\item[(ii)] If $C_w\cap C_{w'}=\emptyset$, exactly one of $C_{w}\prec C_{w'}$ and $C_{w'}\prec C_w$ holds.  
\end{itemize}
\end{lemma}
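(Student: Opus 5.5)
Both parts reduce to comparing $w$ and $w'$ up to the shorter length, using that binary expansions in $[0,1)$ are unique once tails of 1's are excluded. So the plan is to work with $\beta$ on $\Omega\setminus\mathfrak z$ and with the rephrased property \eqref{eq:property_one}. Throughout, let $w=w_1\cdots w_m$ and $w'=w'_1\cdots w'_n$ with $m\le n$ (by symmetry), and let $u:=w'_1\cdots w'_m$ be the prefix of $w'$ of length $m$.

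For (i), suppose $x\in C_w\cap C_{w'}$. By \eqref{eq:def_of_cyl} and the uniqueness of the expansion $\beta^{-1}(x)$, the single sequence $\beta^{-1}(x)$ begins with both $w$ and $w'$. Hence its first $m$ digits equal both $w$ and $u$, so $w=u$. This says $w'$ is an extension of $w$.

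For (ii), suppose $C_w\cap C_{w'}=\emptyset$. First I claim $w\neq u$. Otherwise $C_{w'}\subset C_w$, and $C_{w'}$ is nonempty: it contains $\beta$ of the sequence $w'000\cdots$, which is not in $\mathfrak z$. That would contradict disjointness. So $w$ and $u$ are distinct words of the same length $m$. By \eqref{eq:property_one}, exactly one of $C_w\prec C_u$ and $C_u\prec C_w$ holds. Since $C_{w'}\subset C_u$, the relation $A\prec B$ passes to subsets, so we get $C_w\prec C_{w'}$ or $C_{w'}\prec C_w$.

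It remains to show that not both can hold. Both $C_w$ and $C_{w'}$ are nonempty, so pick $x\in C_w$ and $y\in C_{w'}$. Having both relations would give $x\prec y$ and $y\prec x$, which is impossible because $\succ$ is a total order. This is what makes the alternative exclusive.

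The only delicate points are nonemptiness of cylinders and the fact that $\prec$ restricts to subsets. Neither is a real obstacle, so the proof is routine once \eqref{eq:property_one} is taken as given from $(\ast)$.
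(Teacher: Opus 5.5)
Your proof is correct and follows the paper's argument: you reduce to the length-$m$ prefix $u$ of $w'$, apply \eqref{eq:property_one} to $w$ and $u$, and pass to the subset $C_{w'}\subset C_u$. The differences are minor. For (i) you use uniqueness of the binary expansion directly, where the paper invokes \eqref{eq:property_one}, and you make explicit the nonemptiness of cylinders (needed both for $w\neq u$ and for exclusivity in (ii)), which the paper uses tacitly.
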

\begin{proof}
Let $\ell$ and $\ell'$ be the length of $w$ and $w'$, respectively. 
We may assume $\ell\leq \ell'$ without loss of generality.
Let $w''$ be the first $\ell$ digits of $w'$.
Then, we have $C_{w'}\subset C_{w''}$ by the definition of cylinder sets.

(i) If $C_w\cap C_{w'}\neq \emptyset$, then we have $C_w\cap C_{w''}\neq \emptyset$.
From \eqref{eq:property_one}, it follows that $w=w''$, hence $w'$ is an extension of $w$.

(ii) By assumption, $w'$ is not an extension of $w$, hence $w\neq w''$.
From \eqref{eq:property_one}, exactly one of $C_w\prec C_{w''}$ and $C_{w''}\prec C_{w}$ holds.
The claim follows from $C_{w'}\subset C_{w''}$.

\end{proof}

\begin{lemma}\label{lemma:ord_isom}
Suppose that \(\succ\) is a total order on \([0,1)\) satisfying
\((\ast)\).
Then there exist Lebesgue-null sets \(Z_1,Z_2\subset[0,1)\) and a
bijection
\[
\iota:Z_1^c\to Z_2^c
\]
such that
\[
x\succ y
\quad\Longleftrightarrow\quad
\iota(x)>\iota(y)
\qquad
(x,y\in Z_1^c),
\]
and \(\iota\) preserves the Lebesgue measure.
\end{lemma}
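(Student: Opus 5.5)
The plan is to build \(\iota\) explicitly from the order structure on cylinders. By \((\ast)\) and \eqref{eq:property_one}, for each word \(u\) the two cylinders \(C_{u0}\) and \(C_{u1}\) are comparable. So there is a function \(\sigma(u)\in\{0,1\}\) recording which child is larger: \(\sigma(u)=1\) means \(C_{u1}\succ C_{u0}\). We then define a map on symbol sequences, \(\Phi:\Omega\to\Omega\), by
\[
\Phi(c)_n := c_n \oplus (1\oplus\sigma(c_1\cdots c_{n-1}))
\]
(addition mod 2). In words, we flip the \(n\)-th digit exactly when the order puts the \(0\)-child above the \(1\)-child. This \(\Phi\) is a bijection of \(\Omega\), since it can be inverted digit by digit. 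It preserves \(\nu\): on each cylinder \([u]\), the conditional law of the next digit is fair, and a fair coin stays fair when flipped.

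We then set \(\iota:=\beta\circ\Phi\circ\beta^{-1}\). The exceptional points are handled as follows. Take \(Z_1:=\beta\bigl(\Phi^{-1}(\mathfrak z)\setminus\mathfrak z\bigr)\) and \(Z_2:=\beta\bigl(\Phi(\mathfrak z)\setminus\mathfrak z\bigr)\). Both sets are countable, because \(\mathfrak z\) is countable and \(\Phi\) is a bijection. On \(Z_1^c\), the map \(\iota\) is a well-defined bijection onto \(Z_2^c\). It preserves Lebesgue measure because \(\beta\) and \(\Phi\) are measure preserving and countable sets are null.

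Next comes the order check. Take \(x\neq y\) in \(Z_1^c\), with expansions \(c\neq d\), and let \(N\) be the first index where they differ. By \((\ast)\), \(x\succ y\) holds iff \(C_{c_1\cdots c_N}\succ C_{d_1\cdots d_N}\). Since \(c_1\cdots c_{N-1}=d_1\cdots d_{N-1}=:u\), this holds iff the digit \(c_N\) is the one that \(\sigma(u)\) marks as larger. By construction that happens iff \(\Phi(c)_N=1\) and \(\Phi(d)_N=0\). Now \(\Phi(c)\) and \(\Phi(d)\) agree before index \(N\), because \(\Phi\) depends only on prefixes. Neither lies in \(\mathfrak z\), since \(x,y\notin Z_1\). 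For sequences without a tail of 1's, the usual order of \(\beta\)-values is lexicographic at the first differing digit, so \(\iota(x)>\iota(y)\). The converse follows by swapping \(x\) and \(y\). Injectivity is covered as well, since distinct points are strictly ordered.

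The main obstacle is bookkeeping rather than a real difficulty. We must check that \(\iota\) maps \(Z_1^c\) onto \(Z_2^c\) and not merely into \([0,1)\). We must also make sure the dyadic-rational ambiguity does not break the strict comparison. Strictness needs both image sequences to avoid \(\mathfrak z\): for example \(0.0111\cdots\) and \(0.1000\cdots\) have the same value. This is exactly why \(\Phi^{-1}(\mathfrak z)\) is removed.
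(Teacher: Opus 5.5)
Your proposal is correct and follows the paper's proof essentially verbatim. Your $\Phi$ is exactly the paper's digit-relabeling map $\tau$: its $n$-th digit is $1$ precisely when $c_n$ selects the $\succ$-larger child cylinder of $C_{c_1\cdots c_{n-1}}$. Your exceptional sets $Z_1,Z_2$ coincide with the paper's $[0,1)\setminus\beta(A)$ and $[0,1)\setminus\beta(\tau(A))$, where $A=(\Omega\setminus\mathfrak z)\cap\tau^{-1}(\Omega\setminus\mathfrak z)$. The cylinder-based measure-preservation argument and the first-differing-digit order comparison are likewise the same as in the paper.
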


\begin{proof}
% Let
% \[
% \Omega:=\{0,1\}^{\mathbb N}
% \]
% be equipped with the Bernoulli product measure
% \[
% \nu:=\left(\frac12,\frac12\right)^{\otimes\mathbb N}.
% \]
% Let
% \[
% \mathfrak z
% :=
% \left\{
% (c_1,c_2,\dots)\in\Omega:
% \text{\(c_n=1\) for all sufficiently large \(n\)}
% \right\}.
% \]
% The set \(\mathfrak z\) is countable, and the binary coding map
% \[
% \beta:\Omega\setminus\mathfrak z\longrightarrow[0,1),
% \qquad
% \beta(c):=\sum_{j=1}^{\infty}c_j2^{-j},
% \]
% is a bijection.

% For a finite binary word \(u\), let
% \[
% [u]
% :=
% \{c\in\Omega:\text{\(c\) begins with \(u\)}\}
% \]
% be the corresponding cylinder, and let
% \[
% C_u:=\beta([u]\setminus\mathfrak z).
% \]

% For subsets \(A,B\subset[0,1)\), write \(A\prec B\) if
% \(x\prec y\) for every \(x\in A\) and \(y\in B\).
% By property \((\ast)\), for every finite word \(u\), exactly one of
% \[
% C_{u0}\prec C_{u1},
% \qquad
% C_{u1}\prec C_{u0}
% \]
% holds, where $ua$ $(a\in \{0,1\})$ is the concatenation of the word $u$ and the letter $a$.

For \(a\in \{0,1\}\), let \(\overline{a}\in \{0,1\}\) denote the other element of \(\{0,1\}\).
For a finite word $w$ ($w$ might be the empty word $\emptyset$) and $a\in \{0,1\}$, we define
\[
\sigma_w(a)
:=
\begin{cases}
0,
&
\text{if }C_{wa}\prec C_{w\overline{a}},
\\[1mm]
1,
&
\text{if }C_{w\overline{a}}\prec C_{wa},
\end{cases}
\]
where $wa$ is the concatenation of $w$ and $a$.
The map $\sigma_w(a)$ is well-defined by  \eqref{eq:property_one}.
Note that $\sigma_w(0)\neq \sigma_w(1)$.
% For each \(u\), the map
% $
% a\mapsto\sigma_u(a)
% $
% is a permutation of \(\{0,1\}\).

We define the map \(\tau:\Omega\to\Omega\);
\(c_1c_2\ldots\mapsto d_1d_2\dots\) by
\[
d_j
:=
\sigma_{c_1\cdots c_{j-1}}(c_j),
\qquad
(j\geq1).
\]
Since the first $\ell$ digits $d_1d_2\dots d_\ell$ are determined only by $c_1c_2\dots c_\ell$, we have a one-to-one correspondence $c_1c_2\dots c_\ell\leftrightarrow d_1d_2\dots d_\ell$ for any $\ell\geq 1$.
This implies that $\tau$ is a bijection of \(\Omega\).
Moreover, for any finite words $c=c_1c_2\dots c_\ell$ and $d=\tau(c)=d_1d_2\dots d_\ell$, we have
\[
\tau([c])=[d]\quad \text{and}\quad 
\nu([c])
=
\nu([d])
=
2^{-\ell},
\]
which implies that $\tau$ preserves the product measure of all cylinder sets.
Since the cylinder sets generate the Borel \(\sigma\)-algebra of
\(\Omega\), we conclude that \(\tau\) preserves \(\nu\).

If \(c,c'\in\Omega\setminus \mathfrak{z}\) first differ at position \(j\), their images \(d=\tau(c)\) and \(d'=\tau(c')\) also first differ at position \(j\).
By the definition of \(\sigma_w\), we have
\[
\begin{aligned}
\beta(c)\prec\beta(c')
&\iff
C_{c_1c_2\dots c_j}\prec C_{c_1c_2\dots \overline{c_j}}\\
&\iff
d_1d_2\dots d_j<_{\mathrm{lex}}d_1d_2\dots \overline{d_j}
\iff \tau(c)<_{\mathrm{lex}}\tau(c').
\end{aligned}
\]

Let
\(
A
:=
(\Omega\setminus\mathfrak z)
\cap
\tau^{-1}(\Omega\setminus\mathfrak z)
\)
be a maximal subset of $\Omega$ on which both of $\beta$ and $\beta\circ \tau$ are injective.
Set
\(
B:=\tau(A).
\)
Then, both $(\Omega\setminus\mathfrak z)\setminus A$ and $(\Omega\setminus\mathfrak z)\setminus B$ are countable.
Define
\[
Z_1:=[0,1)\setminus\beta(A),
\qquad
Z_2:=[0,1)\setminus\beta(B).
\]
Then \(Z_1\) and \(Z_2\) are countable, and hence Lebesgue-null.
Finally, define
\[
\iota
:=
\beta\circ\tau\circ
\bigl(\beta|_A\bigr)^{-1}
:
Z_1^c\to Z_2^c.
\]
This map is a bijection, and the preceding order comparison shows that
\[
x\prec y
\quad\Longleftrightarrow\quad
\iota(x)<\iota(y).
\]
Moreover, 
since $\beta$ and $\tau$ are measure-preserving, 
% for every Borel set \(E\subset Z_1^c\),
% \[
% \begin{aligned}
% \operatorname{Leb}(\iota(E))
% &=
% \nu\left(
% \tau\left(
% \beta^{-1}(E)
% \right)
% \right)
% \\
% &=
% \nu\left(
% \beta^{-1}(E)
% \right)
% \\
% &=
% \operatorname{Leb}(E).
% \end{aligned}
% \]
%Thus 
\(\iota\) also preserves the Lebesgue measure.
\end{proof}
\color{black}

By Lemma~\ref{lemma:ord_isom}, we obtain the following statement for i.i.d.\ random variables on \([0,1)\).

\begin{prop}\label{prop:LIS_succ}
For an i.i.d.\ uniform sequence \(a=(a_1,a_2,\dots,a_n)\) taking values in \([0,1)\), let \(U_n(a)\) denote the length of the LIS taken with respect to the usual order, and let \(U_n^{\succ}(a)\) denote the length of the LIS taken with respect to the order \(\succ\). Then, we have
\[
U_n
\stackrel{d}{=}
U_n^{\succ}.
\]
\end{prop}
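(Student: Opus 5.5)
The plan is to transport the i.i.d.\ sample through the order isomorphism $\iota$ of Lemma~\ref{lemma:ord_isom}. Let $Z_1,Z_2$ and $\iota:Z_1^c\to Z_2^c$ be as given there. Since $Z_1$ is Lebesgue-null and the $a_i$ are independent uniform, the event $E:=\{a_i\in Z_1^c\text{ for all }i\le n\}$ has probability one. On $E$ we define $b_i:=\iota(a_i)$, and on the complement we set $b_i:=a_i$, say.

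The first key step is to check that $b=(b_1,\dots,b_n)$ is again an i.i.d.\ uniform sequence on $[0,1)$. Independence holds because each $b_i$ is a measurable function of $a_i$ alone, modulo a null set. The fact that each $b_i$ is uniform comes from $\iota$ being Lebesgue-measure preserving: for Borel $B$ we have $\PP(b_i\in B)=\mathrm{Leb}(\iota^{-1}(B\cap Z_2^c))=\mathrm{Leb}(B)$.

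Here lies the only point needing care. We must know that $\iota$ and $\iota^{-1}$ are Borel measurable. This follows from the construction $\iota=\beta\circ\tau\circ(\beta|_A)^{-1}$: the map $\beta$ is a Borel bijection off countable sets, and $\tau$ is a bimeasurable bijection of $\Omega$, since its finite-level restrictions are bijections of cylinders. Measure preservation of $\iota$ is already stated in the lemma, so we will just invoke it. We should also remark that ties among the $a_i$, or among the $b_i$, occur with probability zero, so the tie-breaking convention plays no role.

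The second key step is deterministic. On $E$, for all $i,j$ we have $a_i\prec a_j$ if and only if $b_i<b_j$. Hence a set of indices $i_1<\dots<i_k$ gives a $\succ$-increasing subsequence of $a$ exactly when it gives a usual increasing subsequence of $b$. It follows that $U_n^{\succ}(a)=U_n(b)$ almost surely. Since $b\stackrel{d}{=}a$, we get $U_n(b)\stackrel{d}{=}U_n(a)$, and therefore $U_n^{\succ}\stackrel{d}{=}U_n$. I expect no real obstacle beyond the measurability bookkeeping noted above.
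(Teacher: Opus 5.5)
Your proof is correct and takes the same approach as the paper, which derives Proposition~\ref{prop:LIS_succ} directly from Lemma~\ref{lemma:ord_isom}: replace $a$ by $\iota(a)$, which is again i.i.d.\ uniform and whose usual order coincides with the $\succ$-order of $a$. Your remarks on measurability, independence and ties fill in routine details that the paper leaves implicit.
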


Let \(x\in [0,1)\) be a uniformly distributed random variable, and define the sequence \(a_1,a_2,a_3,\dots\in [0,1)\) by setting \(a_1=x\) and $a_{n+1}:=f(a_n)$.
For the first \(N\) terms \(a^{(N)}=(a_1,\dots,a_N)\), we let \[\pi_{\succ}^{(N)}:=\mathrm{Pat}_{\succ}(a^{(N)})\in S_N\] 
be the sequence with respect to the order $\succ$ (see \eqref{eq:def_of_pi}).
% \[
% \pi_{\succ}^{(N)}:=\mathrm{Pat}_{\succ}(a^{(N)}).
% \]
Let \(\lambda_{\succ}^{(N)}\) be the Young diagram obtained by applying Schensted's row-insertion algorithm to \(\pi_{\succ}^{(N)}\), and let \(\lambda_{\succ,1}^{(N)}\) denote the length of its first row.

In this paper, we study the asymptotic behavior of
\(
\EE[\lambda_{\succ,1}^{(N)}]
\)
as \( N \to \infty \).
The main theorem of this paper is the following:
\begin{thm}\label{thm:main2}
Assume that the order \(\succ\) on \([0,1)\) satisfies property \((\ast)\).
Then, for any \(\epsilon>0\), we have
%\marginpar{\tiny YN: as \(N\to\infty\)がちょっとフランク？割って極限取って=2で書くか（$\epsilon$不要）、for any sufficiently large $N$にするとか？}
\[
(2-\epsilon)\sqrt{N}<\mathbb{E}[\lambda_{\succ,1}^{(N)}]<(2+\epsilon)\sqrt{N} \qquad \text{as \(N\to\infty\).}
\]
\end{thm}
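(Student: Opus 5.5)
We prove the two inequalities separately. Both reduce, via Proposition~\ref{prop:LIS_succ}, to the classical fact that a uniform random permutation of length $n$ has expected LIS $(2+o(1))\sqrt n$, with concentration of order $n^{1/6}$ or at least $o(\sqrt n)$. Throughout we work in the symbolic coordinate $x=0.b_1b_2\cdots$ with i.i.d.\ fair digits. Then $a_{n+1}=f^n(x)$ has expansion $0.b_{n+1}b_{n+2}\cdots$. By property $(\ast)$ and Lemma~\ref{lemma:cylinder_property}, the $\succ$-order of $a_i,a_j$ is decided by the first position where the tails $b_{i+\cdot}$ and $b_{j+\cdot}$ differ.

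\emph{Lower bound.} Fix a large integer $k$ and split the time window $\{1,\dots,N\}$ into $m$ consecutive blocks of length $L=N/m$. We also split the range of $\succ$ into the $2^k$ cylinders $C_w$, $|w|=k$, ordered by Lemma~\ref{lemma:cylinder_property}(ii) as $C_{w^{(1)}}\prec\cdots\prec C_{w^{(2^k)}}$. Take $m=2^k$ and, in the $r$-th time block, keep only those indices $n$ whose tail starts with $w^{(r)}$. Any increasing subsequence among the kept points of block $r$, concatenated over $r=1,\dots,m$, is $\succ$-increasing overall. Hence
\[
\lambda_{\succ,1}^{(N)}\ \ge\ \sum_{r} \mathrm{LIS}_\succ(\text{kept points of block } r).
\]
Within block $r$ there are about $L2^{-k}$ hits of $w^{(r)}$. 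Conditional on the hit positions, the relative order of these points is determined by their tails after $w^{(r)}$. Discard all hits within distance $K$ (say $K=C\log N$) of another hit, and condition on no two retained tails agreeing for $K$ digits. The tails are then compared using essentially disjoint stretches of fresh digits, so their order pattern is within total variation $o(1)$ of that of i.i.d.\ uniform points; this is the content of Proposition~\ref{prop:near_unif}. Proposition~\ref{prop:LIS_succ} then lets us replace $\succ$ by $<$. Theorem~\ref{thm:known} gives $\mathbb E\,\mathrm{LIS}\approx 2\sqrt{L2^{-k}}$ per block, so the sum is about $m\cdot 2\sqrt{N/m^2}=2\sqrt N$. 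Choosing $k$ slowly growing (or fixed, then letting $\epsilon\to0$), and accounting for discarded hits, which occur with density $O(2^{-k})$ relative to hits for aperiodic $w$, yields $(2-\epsilon)\sqrt N$. Periodic words $w$ should be avoided or handled by a slightly finer choice.

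\emph{Upper bound.} Here we must control every increasing subsequence, so we use a union bound instead of a construction. Fix a partition of $[0,1)$ into cylinders $C_w$ and of time into intervals. An increasing subsequence of length $\ell$ passes through cylinders in $\succ$-order and through times in increasing order, so it decomposes into pieces lying in a product cell (time interval)$\times$(cylinder). The LIS is therefore at most the maximum over monotone lattice paths of the sum of cell LIS's. For each cell, the number of points is a Bernoulli-type sum concentrated near its mean. After thinning away hits that are too close, the cell's pattern is nearly uniform, so the cell LIS exceeds $(2+\delta)\sqrt{\text{count}}$ only with probability $e^{-c\,\text{count}^{\alpha}}$ by the uniform-permutation large-deviation tail. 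A union bound over cells and paths then shows
\[
\sum_{\text{path}} 2\sqrt{n_{\text{cell}}}\le (2+\epsilon)\sqrt N
\]
by Cauchy--Schwarz along a path of $\sim 2m$ cells each of mass $\sim N/m^2$. This holds with high probability, and converting to expectation uses the trivial bound $\lambda_{\succ,1}^{(N)}\le N$.

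The main obstacle is the upper bound near periodic words. A short periodic word $w$ such as $0^j$ or $(01)^j$ produces clusters of consecutive hits whose tails nearly coincide and are, for instance, monotone along the cluster. These can contribute increasing runs that are not seen in the uniform model. Our plan is a variable-length partition: cylinders whose word has a short period are refined further (exceptional cylinders), and the removed clustered points are shown to number $o(\sqrt N)$ in any monotone path using Chernoff bounds for runs in Bernoulli sequences. This bookkeeping, uniform over all admissible orders $\succ$, is where most of the effort will go.
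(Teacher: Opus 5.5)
There is a genuine gap in your upper bound: as you set it up, it gives $4\sqrt N$, not $(2+\epsilon)\sqrt N$. Cut the time axis into $m$ fixed intervals and the values into $R$ cylinders. A monotone lattice path can then visit $R+m-1$ cells. Bounding the LIS by the sum of cellwise bounds $2\sqrt{n_{\mathrm{cell}}}$, with $n_{\mathrm{cell}}\approx N/(Rm)$, gives about $2\sqrt N\,(R+m-1)/\sqrt{Rm}$, which is at least roughly $4\sqrt N$ by AM--GM. Your own count (``$\sim 2m$ cells each of mass $\sim N/m^2$'') gives exactly $4\sqrt N$, and refining the mesh does not help. The loss comes from charging a horizontal run of cells in the same cylinder $\sum 2\sqrt{n}$ instead of the LIS of their union.

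The missing idea is Proposition~\ref{prop:decomp}. Only the value axis is partitioned, and the time cuts $t^\ast_r$ are chosen by the LIS itself. Each cylinder $C_w$ then gets exactly one interval $J^\ast_w$, and these intervals are disjoint with $\sum_w\#J^\ast_w=N$. Because $J^\ast_w$ is random, the estimates for $W_w(J)$, $M_w(J)$ and $M_w(J)-M'_w(J)$ must hold simultaneously for all $\le RN^2$ pairs $(w,J)$ (Proposition~\ref{prop:ideal}). This is why the paper needs an additive slack $N^{1/10}$ and tail bounds strong enough to beat the factor $RN^2$. The weighted Cauchy--Schwarz inequality
\[
\sum_{w\in\mathcal W}\sqrt{{M'_w}^\ast}\le\Bigl(\sum_{w}2^{-\len(w)}\Bigr)^{1/2}\Bigl(\sum_{w}2^{\len(w)}M^\ast_w\Bigr)^{1/2}\le\sqrt{(1+\delta)N+2^{P+Q+3}f}
\]
then uses $\sum_w 2^{-\len(w)}=1$ and $\sum_w\#J^\ast_w=N$ at the same time, and that is what produces the constant $2$. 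It also handles the unequal masses $2^{-\len(w)}$ of the variable-length partition $\mathcal W(P,Q)$, which your equal-mass heuristic ignores. Clustered hits are removed by a one-sided greedy thinning with gap $D=\len(w)+t$, so the tails are genuinely unread digits. Bernstein bounds then control the removed points, giving a total loss $(1+\delta)(t+2)N/4^P+Rg=o(\sqrt N)$ for $P\sim(\frac14+\theta)\log_2N$ and $Q\ge 2P$.

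Your lower bound is the paper's diagonal scheme and is essentially sound, but two points need correcting.

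\emph{The discarded fraction is misestimated.} The proportion of hits of $w$ that have another hit within distance $K$ is of order $2^{-\chi(w)}+K2^{-k}$, not $O(2^{-k})$. With $K=C\log N$ and $k$ fixed, almost all hits are discarded. You need $\log N\ll 2^k\ll\sqrt N$; the paper takes $m\sim\frac25\log_2N$ and drops the fewer than $2^P$ words of period $<P$.

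\emph{The tails are no longer fresh.} Two-sided thinning conditions the retained tails, since they must avoid $w$ near their start. So Proposition~\ref{prop:near_unif} does not apply as stated. You can either argue exchangeability of the retained tails from the renewal structure of occurrences, or use the paper's device. That device takes the first hit in a search window of length $S\ll 2^m$ and reads the next $t$ digits, which are fresh by the strong Markov property.
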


% By applying Theorem~\ref{thm:main2}, the original result (Theorem~\ref{thm:main1}) can be extended from the doubling map to maps other than the doubling map, such as the tent map.

If we take $\succ$ as the ordinary order $>$, we obtain the following estimate for the doubling map $f$.
\begin{cor}\label{thm:main1}
For any \( \epsilon > 0 \), we have
\[
(2-\epsilon)\sqrt{N} < \mathbb{E}[\lambda_1^{(N)}] < (2+\epsilon)\sqrt{N}\qquad \text{ as \( N \to \infty \).}
\]
\end{cor}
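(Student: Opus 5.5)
The plan is to derive Corollary~\ref{thm:main1} directly from Theorem~\ref{thm:main2} by specializing the order \(\succ\) to the usual order \(>\) on \([0,1)\). Two things have to be checked: that \(>\) satisfies property \((\ast)\), and that the objects \(\pi^{(N)}\), \(\lambda^{(N)}\) defined in \eqref{eq:def_of_pi} coincide with \(\pi_{\succ}^{(N)}\), \(\lambda_{\succ}^{(N)}\) when \(\succ={>}\). Granting these, the estimate follows at once.

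First, I verify \((\ast)\) for \(>\), using the convention that binary expansions never end in an infinite tail of \(1\)'s. Let \(c,d\) agree in positions \(i<N\) and satisfy \(c_N=1\), \(d_N=0\). Then
\[
0.c_1c_2\cdots-0.d_1d_2\cdots \;=\; 2^{-N}+\sum_{j>N}(c_j-d_j)2^{-j}.
\]
The sum is strictly greater than \(-2^{-N}\), because \(d\) does not end in all \(1\)'s, so \(d_j=1\) fails for some \(j>N\). Hence the difference is strictly positive. Since \(0.c_1\cdots c_N>0.d_1\cdots d_N\) holds by the same digit comparison, both sides of the equivalence in \((\ast)\) hold. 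Symmetrically, both fail when \(c_N=0\), \(d_N=1\).

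Equivalently, in the rephrased form \eqref{eq:property_one}, the cylinders \(C_w\) are the half-open dyadic intervals \([k2^{-\ell},(k+1)2^{-\ell})\). Distinct words of equal length give disjoint intervals that are linearly ordered by \(<\).

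Second, I identify the objects. With \(\succ={>}\), the pattern \(\mathrm{Pat}_{\succ}(a^{(N)})\) is by definition \(\mathrm{Pat}(a^{(N)})=\pi^{(N)}\), with the same tie-breaking rule, so \(\lambda_{\succ,1}^{(N)}=\lambda_1^{(N)}\) pointwise in \(x\). Ties are in any case irrelevant. If \(f^{i}(x)=f^{j}(x)\) with \(i<j\), then \(x\) is eventually periodic, hence rational, and this happens only on a Lebesgue-null set. So the expectations agree regardless of convention.

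Applying Theorem~\ref{thm:main2} with \(\succ={>}\) then gives \((2-\epsilon)\sqrt N<\EE[\lambda_1^{(N)}]<(2+\epsilon)\sqrt N\) for all sufficiently large \(N\). There is no real obstacle here. The only delicate point is the strict inequality in the first step, which relies on excluding expansions with an all-\(1\) tail. All the substantive work lies in Theorem~\ref{thm:main2} itself.
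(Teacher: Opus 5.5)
Your proposal is correct and follows the paper's route exactly: the corollary is obtained by taking $\succ$ to be the usual order $>$ in Theorem~\ref{thm:main2}. Your explicit check that $>$ satisfies $(\ast)$, which uses the exclusion of all-$1$ tails, and the identification $\lambda_{\succ,1}^{(N)}=\lambda_1^{(N)}$ supply details the paper leaves implicit.
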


\subsection{Application to the tent map}

An interesting application   where a non-trivial order $\succ$ appears is the dynamics of the \textit{tent map} \(g:[0,1]\to [0,1]\), defined by
\[
g(x)=1-|1-2x|.
\]
% The graph of $g$ is given in Fig \ref{fig:tentmap}.

% \begin{figure}[htpb]
% \centering
% \begin{tikzpicture}[scale=2]

%   \draw[->] (-0.2,0) -- (1.15,0) node[right] {$x$};
%   \draw[->] (0,-0.2) -- (0,1.15) node[above] {$g(x)$};

%   \draw (0.5,0.02) -- (0.5,-0.02)
%         node[below] {$\frac12$};
%   \draw (1,0.02) -- (1,-0.02)
%         node[below] {$1$};
%   \draw (0.02,1) -- (-0.02,1)
%         node[left] {$1$};

%   \node[below left] at (0,0) {$O$};

%   \draw[dashed,gray] (0.5,0) -- (0.5,1);
%   \draw[dashed,gray] (0,1) -- (0.5,1);

%   \draw[very thick]
%     (0,0) -- (0.5,1) -- (1,0);

%   \fill (0.5,1) circle (0.6pt);

%   % \node[right] at (0.9,0.68)
%   %   {$g(x)=1-\lvert 1-2x\rvert$};

% \end{tikzpicture}
%     \caption{The graph of the tent map $g(x)=1-|1-2x|$.}
%     \label{fig:tentmap}
% \end{figure}

The sequence generated by $g$ can be encoded by the \textit{Gray code}~\cite{Arroyo2014} explained below.
For a binary code $x=c_1c_2c_3\dots$, the \textit{Gray code} associated to $x$ is given by
\[
\widetilde{\Gamma}(x)=w_1w_2w_3\dots,
\]
where
\[
w_n \equiv c_n+c_{n-1}\pmod 2,\qquad w_n\in\{0,1\},
\]
and \(c_0=0\).
The Gray code transformation $\widetilde{\Gamma}$ is invertible, and the inverse map $\widetilde{\Gamma}^{-1}(w)=0.c_1c_2c_3\dots$ is given as
\begin{equation}\label{eq:example_of_order}
c_i\equiv w_1+w_2+\dots+w_i \pmod{2}.
\end{equation}

Let $\succ_G$ be the \textit{Gray code order} defined as follows:
For any two distinct binary codes 
\[
w=w_1w_2\dots w_iw_{i+1}w_{i+2}\dots\,\text{ and }\,
w'=w_1w_2\dots \overline{w_i}w'_{i+1}w'_{i+2}\dots,
\]
which first differ at the $i$-th digit, define
\begin{equation}\label{eq:def_of_G_order}
\begin{aligned}
&w
\succ_G
w'\iff w_1+w_2+\dots+w_i\equiv 1\pmod 2.
\end{aligned}
\end{equation}
Comparing \eqref{eq:def_of_G_order} with \eqref{eq:example_of_order}, we have
\begin{equation}\label{eq:til_Gamma_rel}
x>y\iff \widetilde{\Gamma}(x)\succ_G\widetilde{\Gamma}(y)
\end{equation}
for any $x,y\in \Omega$.

Let $\widetilde{g}:\Omega\to \Omega$ be the map defined by
\[
\tilde{g}:
c_1c_2c_3\dots \mapsto 
\begin{cases}
c_2c_3c_4\dots  & (c_1=0),\\
\overline{c_2}\, \overline{c_3}\,\overline{c_4}\dots  & (c_1=1).
\end{cases}
\]
\begin{lemma}\label{lemma:comm_rel_gG}
The commutation relation
$
\widetilde{\Gamma}\circ \tilde{g}=\tilde{f}\circ \widetilde{\Gamma}
$    
holds, where $\widetilde{f}$ is the shift map \eqref{eq:def_shift}.
\end{lemma}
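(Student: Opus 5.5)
The identity $\widetilde{\Gamma}\circ \tilde g=\tilde f\circ\widetilde{\Gamma}$ can be checked digitwise, directly from the definitions of the Gray code and of $\tilde g$. Fix $c=c_1c_2c_3\dots\in\Omega$ and write $\widetilde{\Gamma}(c)=w_1w_2w_3\dots$ with $w_n\equiv c_n+c_{n-1}\pmod 2$ and $c_0=0$. The right-hand side is then $\tilde f(\widetilde{\Gamma}(c))=w_2w_3w_4\dots$. So the $n$-th digit of the right-hand side is $w_{n+1}\equiv c_{n+1}+c_n$. It suffices to show that the $n$-th digit of $\widetilde{\Gamma}(\tilde g(c))$ equals $c_{n+1}+c_n \bmod 2$ for every $n\ge1$. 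We split into the two cases in the definition of $\tilde g$.

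\emph{Case $c_1=0$.} Here $\tilde g(c)=e_1e_2\dots$ with $e_n=c_{n+1}$. Applying $\widetilde{\Gamma}$ with the convention $e_0=0$, the first digit is $e_1+e_0=c_2$. Since $c_1=0$, this equals $c_2+c_1=w_2$. For $n\ge2$, the $n$-th digit is $e_n+e_{n-1}=c_{n+1}+c_n=w_{n+1}$.

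\emph{Case $c_1=1$.} Here $e_n=\overline{c_{n+1}}\equiv c_{n+1}+1$. The first digit is $e_1+0\equiv c_2+1=c_2+c_1=w_2$, because $c_1=1$. For $n\ge2$, the two added $1$'s cancel mod $2$, so $e_n+e_{n-1}\equiv c_{n+1}+c_n=w_{n+1}$.

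In both cases every digit agrees, which proves the relation on all of $\Omega$.

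The only point needing care, and the one I expect to be the real obstacle, is the first digit. The convention $c_0=0$ (equivalently $e_0=0$) interacts with the value of $c_1$. Precisely this interaction is what the complementation in the branch $c_1=1$ compensates for: complementing the tail shifts the leading digit by exactly $c_1$. Since the statement concerns the maps on $\Omega$ (not on $[0,1)$), the exceptional set $\mathfrak z$ plays no role here. It will matter only later, when $\beta$ is used to transfer the relation to the tent map $g$ and the doubling map $f$ on the interval.
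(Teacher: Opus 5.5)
Your proof is correct and follows the same route as the paper: you compare $\widetilde\Gamma(\tilde g(c))$ with $\tilde f(\widetilde\Gamma(c))$ digit by digit, splitting on $c_1\in\{0,1\}$, and you treat the first digit separately using the convention $c_0=0$, while for the later digits the complementations cancel mod $2$. You are a bit more explicit than the paper about why the first digit matches in the case $c_1=1$.
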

\begin{proof}
Let $x=c_1c_2c_3\dots\in \Omega$,
$\widetilde{\Gamma}\circ \widetilde{g}(x)=a_1a_2a_3\dots$, and 
$\widetilde{f}\circ \widetilde{\Gamma}(x)=b_1b_2b_3\dots$.
Then we have
\[
a_1\equiv 
\begin{cases}
c_2&(c_1=0)\\
\overline{c_2}&(c_1=1)
\end{cases},\ 
a_i\equiv 
\begin{cases}
c_i+c_{i+1} & (c_1=0)\\
\overline{c_i}+\overline{c_{i+1}}&(c_1=1)
\end{cases}
,\qquad b_i\equiv c_i+c_{i+1},
\]
which implies $a_i=b_i$.
\end{proof}

Through the binary coding map $\beta$, the Gray order naturally induces a total order on the interval $[0,1)$.
For brevity, we use the same notation $\succ_G$ to describe that order.
Since the order $\succ_G$ satisfies the property \((\ast)\), it follows from Lemma \ref{lemma:ord_isom} that there exists two Lebesgue-null sets $Z_1,Z_2\subset [0,1)$ and a measure-preserving bijection
\[
\Gamma:Z_1^c\to Z_2^c
\]
satisfying 
\begin{equation}\label{eq:rel_through_G}
X> Y\iff \Gamma(X)\succ_G \Gamma(Y).
\end{equation}

From \eqref{eq:til_Gamma_rel} and \eqref{eq:rel_through_G}, we find that $\Gamma$ is induced from $\widetilde{\Gamma}$ through $\beta$:
\[
\Gamma=\beta\circ \widetilde{\Gamma}\circ(\beta|_{Z_1^c})^{-1}.
\]
On the other hand, on the set $\mathbb{Q}^c:=[0,1)\setminus \mathbb{Q}$, $g$ is induced from $\widetilde{g}$:
\[
g=\beta \circ \widetilde{g}\circ (\beta|_{\mathbb{Q}^c})^{-1}.
\]
By Lemma \ref{lemma:comm_rel_gG}, except for the countable set $Z_1\cup (\mathbb{Q}\cap [0,1))$, $\Gamma$ satisfies the commutative relation
\begin{equation}\label{eq:comm_rel_G}
\Gamma\circ g=f\circ \Gamma.
\end{equation}

\begin{lemma}\label{lemma:f_pattern_g}
Except for countably many $x\in [0,1)$, the order pattern of
\[
x,g(x),\dots,g^{N-1}(x)
\]
in the usual order agrees with the order pattern of
\[
\Gamma(x),f(\Gamma(x)),\dots,f^{N-1}(\Gamma(x))
\]
in the order \(\succ_G\).
\end{lemma}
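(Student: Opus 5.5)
The plan is to combine the conjugacy \eqref{eq:comm_rel_G} with the order-isomorphism \eqref{eq:rel_through_G}. Let $E$ be the countable exceptional set on which \eqref{eq:comm_rel_G} may fail; by the discussion above, $E\subset Z_1\cup(\mathbb{Q}\cap[0,1))$. I would set
\[
E':=\bigcup_{n\ge 0} g^{-n}(E).
\]
Since $g$ is two-to-one (each point has at most two preimages in $[0,1]$), every $g^{-n}(E)$ is countable, and hence so is $E'$. I also want the orbit to stay in $[0,1)$, so I add the countable set $\bigcup_n g^{-n}(\{1\})$ to $E'$. Now fix $x\notin E'$. Then $g^n(x)\notin E$ for every $n\ge0$. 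In particular $g^n(x)\notin Z_1$, so $\Gamma$ is defined at each orbit point. Moreover \eqref{eq:comm_rel_G} holds at each $g^n(x)$.

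\textbf{Step 1 (conjugacy along the orbit).} By induction on $n$ I would show $\Gamma(g^n(x))=f^n(\Gamma(x))$ for all $n\ge 0$. The case $n=0$ is trivial. For the inductive step, $\Gamma(g^{n+1}(x))=\Gamma(g(g^n(x)))=f(\Gamma(g^n(x)))=f^{n+1}(\Gamma(x))$, using \eqref{eq:comm_rel_G} at the point $g^n(x)\notin E$.

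\textbf{Step 2 (transfer of order).} For $0\le i,j\le N-1$, apply \eqref{eq:rel_through_G} to $X=g^i(x)$ and $Y=g^j(x)$, both of which lie in $Z_1^c$. This gives
\[
g^i(x)>g^j(x)\iff f^i(\Gamma(x))\succ_G f^j(\Gamma(x)).
\]
The relation $g^i(x)=g^j(x)$ is likewise equivalent to $f^i(\Gamma(x))=f^j(\Gamma(x))$, because $\Gamma$ is a bijection. The tie-breaking convention in $\mathrm{Pat}$ depends only on the indices, so it is the same on both sides. Hence $\mathrm{Pat}(x,\dots,g^{N-1}(x))=\mathrm{Pat}_{\succ_G}(\Gamma(x),\dots,f^{N-1}(\Gamma(x)))$, which is the claim.

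\textbf{Main obstacle.} The delicate point is the bookkeeping of exceptional sets. One needs all orbit points, not just $x$, to avoid both $Z_1$ and the rationals where the identity $g=\beta\circ\widetilde g\circ\beta^{-1}$ may fail. The countability of the backward saturation $E'$ handles this, since $g$ is finite-to-one. If one prefers to avoid preimages, there is a simpler route: take $x$ irrational. Then every $g^n(x)$ is irrational, because $g$ maps irrationals to irrationals. Its binary expansion then lies in neither $\mathfrak z$ nor the eventually-zero sequences, and one checks directly that it is not in $Z_1$, since $\tau$ sends non-eventually-constant sequences to non-eventually-constant ones. This would give the lemma for all $x\notin\mathbb{Q}$.
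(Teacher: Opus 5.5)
Your main argument is correct and matches the paper's proof, which simply cites \eqref{eq:rel_through_G} and \eqref{eq:comm_rel_G}; you make the exceptional-set bookkeeping explicit. One fix: define $E:=Z_1\cup(\mathbb{Q}\cap[0,1))$ outright rather than only $E\subset Z_1\cup(\mathbb{Q}\cap[0,1))$, since otherwise $g^n(x)\notin Z_1$ does not follow from $g^n(x)\notin E$.

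In your optional shortcut the conclusion is true but the stated reason is not. The domain of $\Gamma=\beta\circ\widetilde\Gamma\circ\beta^{-1}$ is governed by $\widetilde\Gamma$, the inverse of the prefix-sum map $\tau$ of Lemma~\ref{lemma:ord_isom}. $\widetilde\Gamma$ does not preserve non-eventual-constancy: e.g.\ $\widetilde\Gamma(0101\cdots)=0111\cdots\in\mathfrak z$, so $1/3\in Z_1$. What actually keeps irrational points out of $Z_1$ is that their binary expansions are not eventually periodic, hence not eventually alternating.
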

\begin{proof}
This lemma follows from \eqref{eq:rel_through_G} and \eqref{eq:comm_rel_G}.
\end{proof}

\begin{example}
For $x=0.11000101...$, the Gray code expression of $x$ is
\[
\Gamma(x)=0.10100111...
\]
One checks that $g(x)=0.0111010...$, and its Gray code expression is 
\[
\Gamma(g(x))=0.0100111...
=f(\Gamma(x)).
\]
\end{example}

It follows from Lemma \ref{lemma:f_pattern_g} that the expectation value of the length of a $>$-LIS obtained by the tent map is the same as that of a $\succ$-LIS obtained by the doubling map.
Hence, we obtain the following corollary from Theorem~\ref{thm:main2}.

\begin{cor}
Let $E_N$ be the expectation value of the length of a LIS contained in the sequence $x,g(x),g^2(x),\dots,g^{N-1}(x)$ with respect to the Lebesgue measure.
For any $\epsilon>0$, we have
\[
(2-\epsilon)\sqrt{N}<E_N<(2+\epsilon)\sqrt{N}
\]
as $N\to \infty$.
\end{cor}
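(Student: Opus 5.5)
The corollary follows from Theorem~\ref{thm:main2} once we show that $E_N$ equals $\EE[\lambda_{\succ_G,1}^{(N)}]$. The plan has three steps: verify that $\succ_G$ satisfies $(\ast)$, identify the two LIS lengths pointwise off a null set, and then use that $\Gamma$ preserves Lebesgue measure to transfer the expectation.

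First, check that $\succ_G$, transported to $[0,1)$ through $\beta$, is a total order satisfying $(\ast)$. Take two sequences that first differ at position $i$. By \eqref{eq:def_of_G_order}, their comparison is decided by the parity of $w_1+\dots+w_i$, and this depends only on the first $i$ digits. Hence the truncations $0.w_1\dots w_i$ and $0.w_1\dots\overline{w_i}$ are ordered the same way as the full sequences. Totality and transitivity follow from \eqref{eq:til_Gamma_rel}: the map $\widetilde\Gamma$ is a bijection of $\Omega$ carrying the lexicographic order, which is total, onto $\succ_G$. So Lemma~\ref{lemma:ord_isom} and Theorem~\ref{thm:main2} both apply to $\succ_G$.

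Second, let $L_N(x)$ be the usual-order LIS length of $x,g(x),\dots,g^{N-1}(x)$, so that $E_N=\int_0^1 L_N\,dx$. Set $y=\Gamma(x)$. Lemma~\ref{lemma:f_pattern_g} says that, outside a countable set $Z$, the order pattern of the tent orbit of $x$ in the usual order coincides with the $\succ_G$-order pattern of the doubling orbit of $y$. The LIS length is a function of the order pattern alone, so
\[
L_N(x)=\lambda_{\succ_G,1}^{(N)}(\Gamma(x)) \qquad \text{for } x\notin Z\cup Z_1 .
\]
One point needs care here: coincidences. If some orbit points are equal, the tie-breaking rule in $\mathrm{Pat}$ is used. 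Equal orbit points occur only for periodic or preperiodic $x$, which are rational and therefore form a countable set; we add this set to $Z$.

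Third, integrate. Since $\Gamma:Z_1^c\to Z_2^c$ is a measure-preserving bijection and $Z\cup Z_1$ and $Z_2$ are null,
\[
E_N=\int_{Z_1^c} \lambda_{\succ_G,1}^{(N)}(\Gamma(x))\,dx=\int_{Z_2^c}\lambda_{\succ_G,1}^{(N)}(y)\,dy=\EE[\lambda_{\succ_G,1}^{(N)}].
\]
Measurability is not an issue, because $\lambda^{(N)}_{\succ_G,1}$ is a bounded function that is constant on cylinder sets of sufficiently large length, up to null sets. Theorem~\ref{thm:main2} then gives $(2-\epsilon)\sqrt N<E_N<(2+\epsilon)\sqrt N$ for all large $N$.

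The only delicate point is the bookkeeping of null sets in the second step: the equality must hold off a single countable set on which $\Gamma$ is defined, \eqref{eq:comm_rel_G} holds along the entire orbit, and no ties occur. Each of these exceptional sets is countable, and a countable union of them is still countable, so this should go through directly.
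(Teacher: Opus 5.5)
Your proposal is correct and follows the paper's own argument. Lemma~\ref{lemma:f_pattern_g} identifies the tent-map LIS at $x$ with the $\succ_G$-LIS of the doubling orbit of $\Gamma(x)$ off a countable set, the measure-preserving bijection $\Gamma$ transfers the expectation, and Theorem~\ref{thm:main2} finishes; your checks of $(\ast)$ for $\succ_G$, of ties, and of the null-set bookkeeping spell out what the paper leaves implicit.

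One side remark needs rewording. $\lambda^{(N)}_{\succ_G,1}$ is not constant on all cylinders of a single fixed length, because orbit points can share arbitrarily many leading digits (e.g.\ near the period-two orbit $\{\tfrac13,\tfrac23\}$). It is, however, locally constant off a countable set, and that is all measurability requires.
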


\subsection{LIS and finite cylinder partitions}

A finite set \(\mathcal W\) of finite binary words is called a
\emph{complete prefix set} if the cylinders
\[
\{C_u:u\in\mathcal W\}
\]
are pairwise disjoint and cover \([0,1)\).  Let $\succ$ be a total order on $[0,1)$ satisfying the property \((\ast)\).
By Lemma \ref{lemma:cylinder_property}, if \(w,w'\in\mathcal W\) are distinct, exactly one of $C_w\prec C_{w'}$ and $C_{w'}\prec C_w$ holds.
Thus, after enumerating \(\mathcal W=\{w_0,\dots,w_{R-1}\}\)
appropriately and setting \(I_r:=C_{w_r}\), we may assume that
\[
I_0\prec I_1\prec\cdots\prec I_{R-1},\quad (R=\# \mathcal{W}).
\]

For a finite sequence \(z=(z_1,\dots,z_M)\), write
\(\mathrm{LIS}_\succ(z)\) for the length of a longest $\succ$-increasing subsequence contained in $z$.

%\input{figures/fig_partition}
\input{fig_partition}

\begin{prop}\label{prop:decomp}
Let \(a=(a_1,\dots,a_N)\in[0,1)^N\), and let
\begin{equation}\label{eq:order_separation}I_0\prec I_1\prec\cdots\prec I_{R-1}
\end{equation}
be the cylinder partition associated with a complete prefix set.
Let
\[
\Delta_N^{(R)}
:=
\left\{
(t_0,\dots,t_R)\in\mathbb Z^{R+1}:
1=t_0\leq t_1\leq\cdots\leq t_R=N+1
\right\}
\]
be the set of partitions of $\{1,2,\dots,N+1\}$.
Then, we have
\begin{equation}\label{eq:lambda_est}
\lambda_{\succ,1}^{(N)}
=
\max_{(t_0,\dots,t_R)\in\Delta_N^{(R)}}
\sum_{r=0}^{R-1}
\mathrm{LIS}_\succ
\bigl(
a_i:t_r\leq i<t_{r+1},a_i\in I_r
%I_r\cap \{a_{t_r},a_{t_r+1},\dots,a_{t_{r+1}-1}\}
\bigr).
\end{equation}
\end{prop}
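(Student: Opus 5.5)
We prove \eqref{eq:lambda_est} by showing the two inequalities separately. Throughout, \(\lambda_{\succ,1}^{(N)}=\mathrm{LIS}_\succ(a_1,\dots,a_N)\), and by Lemma~\ref{lemma:cylinder_property} the cylinders \(I_0,\dots,I_{R-1}\) partition \([0,1)\) with every element of \(I_r\) \(\prec\)-below every element of \(I_{r'}\) whenever \(r<r'\).

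For the inequality ``\(\geq\)'', fix \((t_0,\dots,t_R)\in\Delta_N^{(R)}\). For each \(r\), choose a longest \(\succ\)-increasing subsequence of the block \((a_i: t_r\le i<t_{r+1},\ a_i\in I_r)\). Concatenate these subsequences in the order \(r=0,1,\dots,R-1\). The concatenation is again a subsequence of \(a\), because the index windows \([t_r,t_{r+1})\) are consecutive and disjoint. It is \(\succ\)-increasing for two reasons. Within one block this holds by construction. Across blocks \(r<r'\), the later term lies in \(I_{r'}\) and the earlier one in \(I_r\), so the order separation \eqref{eq:order_separation} gives the required inequality. Its length is the sum in \eqref{eq:lambda_est}, and taking the maximum over the partition yields ``\(\geq\)''.

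For the inequality ``\(\leq\)'', take a longest \(\succ\)-increasing subsequence \(a_{i_1}\prec a_{i_2}\prec\cdots\prec a_{i_L}\) with \(i_1<\cdots<i_L\), and let \(r(k)\) be the index with \(a_{i_k}\in I_{r(k)}\). The main point is that \(r(k)\) is nondecreasing in \(k\). Indeed, if \(r(k)>r(k+1)\), then \(I_{r(k+1)}\prec I_{r(k)}\) would force \(a_{i_{k+1}}\prec a_{i_k}\), contradicting the increase.

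Now we build a partition adapted to this subsequence. For each \(r\) with some \(k\) satisfying \(r(k)=r\), set \(t_r:=\min\{i_k: r(k)=r\}\). For the remaining \(r\), define \(t_r\) so that \(1=t_0\le t_1\le\cdots\le t_R=N+1\) and no chosen index is captured by a wrong block. Concretely, for an \(r\) with no terms, take \(t_r\) equal to \(t_{r'}\) for the next \(r'>r\) that has terms, or \(N+1\) if there is none. Then put \(t_0:=1\) regardless; this is harmless, since it only enlarges block \(0\), and block \(0\) contains no index belonging to a later cylinder among the chosen ones before \(t_1\).

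With this choice, every \(i_k\) with \(r(k)=r\) satisfies \(t_r\le i_k<t_{r+1}\). This follows from the monotonicity of \(r(k)\): all terms in cylinder \(r\) come before the first term in any later nonempty cylinder. The terms with \(r(k)=r\) therefore form a \(\succ\)-increasing subsequence of block \(r\), so
\[
L\le\sum_r \mathrm{LIS}_\succ\bigl(a_i:t_r\le i<t_{r+1},\,a_i\in I_r\bigr),
\]
which gives ``\(\leq\)''.

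The only delicate step is this bookkeeping of the \(t_r\): handling empty cylinders and the endpoint conventions \(t_0=1\), \(t_R=N+1\). Defining each \(t_r\) as the smallest index of a chosen term in some cylinder \(\ge r\) (or \(N+1\) if there is none), and then resetting \(t_0=1\), gives a nondecreasing sequence in \(\Delta_N^{(R)}\) with all required properties.
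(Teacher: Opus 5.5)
Your proof is correct and follows the paper's argument: concatenation gives the lower bound on $\lambda_{\succ,1}^{(N)}$, and for the upper bound you use that the cylinder indices are nondecreasing along a longest $\succ$-increasing subsequence, then choose cut points. The only difference is the choice of cut points: you take $t_r$ to be the first chosen index lying in a cylinder of index $\geq r$ (resetting $t_0=1$), while the paper takes $t_r$ to be one plus the last chosen index lying in a cylinder of index $<r$. Both choices satisfy $t_r\leq \kappa_j<t_{r+1}$ whenever $\rho_j=r$.
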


\begin{proof}
Let
\[
a_{\kappa_1}\prec a_{\kappa_2}\prec\cdots\prec a_{\kappa_\ell},
\qquad
1\leq\kappa_1<\cdots<\kappa_\ell\leq N,
\]
be a longest \(\succ\)-increasing subsequence, so that
\(\ell=\lambda_{\succ,1}^{(N)}\).  For each \(j\), let
\(\rho_j\in\{0,\dots,R-1\}\) be the unique index satisfying
\(a_{\kappa_j}\in I_{\rho_j}\).  By
\eqref{eq:order_separation}, the sequence
\(\rho_1,\dots,\rho_\ell\) is nondecreasing.

For \(0\leq r<R\), define
\[
t_r
:=
1+\max\bigl(\{\kappa_j:\rho_j<r\}\cup\{0\}\bigr),
\]
and set \(t_R:=N+1\).  Then
\((t_0,\dots,t_R)\in\Delta_N^{(R)}\).  If \(\rho_j=r\), then
\(
t_r\leq\kappa_j<t_{r+1}.
\)
Consequently, the part of the chosen LIS lying in \(I_r\) is contained
in
\[
(a_i:t_r\leq i<t_{r+1},\ a_i\in I_r)
=
I_r\cap \{a_{t_r},a_{t_r+1},\dots,a_{t_{r+1}-1}\}.
\]
Summing over \(r\) proves that the left-hand side of
\eqref{eq:lambda_est} is at most its right-hand side.

Conversely, fix
\((t_0,\dots,t_R)\in\Delta_N^{(R)}\), and for every \(r\) choose a
longest \(\succ\)-increasing subsequence of
\(
(a_i:t_r\leq i<t_{r+1},\ a_i\in I_r).
\)
The time intervals occur in increasing order, and every point of
\(I_r\) is \(\succ\)-smaller than every point of \(I_{r+1}\).
Therefore the concatenation of these \(R\) subsequences is a
\(\succ\)-increasing subsequence of \(a\).  This proves the reverse
inequality.
\end{proof}
\color{black}

\section{Lower Bound}

% Hereafter, we fix a total order \(\succ\) satisfying the property \((\ast)\).
In this section, we evaluate a lower bound of the expectation value $\EE[\lambda^{(N)}_{\succ,1}]$.
The basic idea of the proof is to find a ``sufficiently large'' subsequence of $a_1,a_2,\dots$ distributed like a uniform random permutation.

\subsection{Uniform complete prefix set $\mathcal{W}_m$}

For \(m\geq 1\), let $\mathcal{W}_m:=\{0,1\}^m$ be the set of binary words of length $m$.
By the property \eqref{eq:property_one}, one can arrange the element of \(\mathcal W_m\) as
\[
w_0,w_1,\dots,w_{R-1},\qquad (R=2^m)
\]
so that the set $\{C_{w}:w\in \mathcal{W}_m\}$ satisfies $C_{w_0}\prec C_{w_1}\prec\cdots\prec C_{w_{R-1}}$.
Therefore, \(\mathcal{W}_m\) is a complete prefix set.
Set \(I_r:=C_{w_r}\).
% , so
% \(I_0,\dots,I_{R-1}\) form a partition of \([0,1)\).

\begin{example}
Let \(m=3\) and let \(\succ\) be the total order defined in \eqref{eq:def_of_G_order}.  Then
\[
C_{000}\prec C_{001}\prec
C_{011}\prec
C_{010}\prec
C_{110}\prec
C_{111}\prec
C_{101}\prec
C_{100},
\]
and hence
\[
\begin{gathered}
I_0=\left[0,\frac18\right),\quad
I_1=\left[\frac18,\frac28\right),\quad
I_2=\left[\frac38,\frac48\right),\quad
I_3=\left[\frac28,\frac38\right),\\
I_4=\left[\frac68,\frac78\right),\quad
I_5=\left[\frac78,1\right),\quad
I_6=\left[\frac58,\frac68\right),\quad
I_7=\left[\frac48,\frac58\right).
\end{gathered}
\]
\end{example}

Let
\[
T:=\left\lfloor\frac{N}{R}\right\rfloor,
\qquad
J_r:=\{rT+1,rT+2,\dots,(r+1)T\}
\quad(0\leq r<R).
\]
Let \(L_r\) be the \(\succ\)-LIS length of
\[
(a_i:i\in J_r,\ a_i\in I_r).
\]
Choose \(t_r=rT+1\) for \(0\leq r<R\) and \(t_R=N+1\) in
Proposition~\ref{prop:decomp}.  The last time interval may contain the
unused remainder \(\{RT+1,\dots,N\}\), which can only increase its LIS.
Therefore
\begin{equation}\label{eq:lambda1_vs_L}
\lambda_{\succ,1}^{(N)}
\geq
\sum_{r=0}^{R-1}L_r.
\end{equation}
\color{black}

\subsection{Random variable $K_r$}
Let \(x\in [0,1)\) be the initial value, and
\begin{equation}\label{eq:binary}
x=0.b_1b_2b_3\cdots\qquad (b_i\in \{0,1\})
\end{equation}
be its binary expansion.
Since \(x\) is uniformly distributed, the binary sequence \(b_1,b_2,b_3,\dots\in \{0,1\}\) is an i.i.d. sequence.

We take three positive integers $S,t$, and $B$ satisfying
\[
S\geq m\quad \text{and}\quad B=S+t.
\]
For now, we leave these parameters unspecified; their precise dependence on \(N\) will be fixed later.
We divide the first \(N\) digits of the binary expansion \eqref{eq:binary} into $R$ blocks of length $T$ (see Example \ref{ex:first} below).
Each block is divided into $\lfloor T/B\rfloor$ subblocks of length \(B\).
(In these procedures, remainders are ignored.)
In addition, each subblock is divided into two parts: the first part consists of $S$ consecutive digits, and the latter part consists of $t$ consecutive digits.
We call the first part of each subblock the \emph{search window}, and the latter part the \emph{tail}.
Let \(J_{r,i}\) denote the \(i\)-th subblock contained in \(J_r\), namely
\[
J_{r,i}:=\{rT+(i-1)B+1,rT+(i-1)B+2,\dots,rT+iB\}
\subset J_r.
\]

\begin{example}\label{ex:first}
If $N=80$, $m=3$, $R=2^m=8$, and $(S,t,B)=(3,1,4)$, the binary expansion of $x$ is decomposed as
\[
x=0.
\big|\overbrace{(\underbrace{110|0}_{J_{0,1}})(\underbrace{101|0}_{J_{0,2}})10}^{J_0}
\big|\overbrace{(\underbrace{001|0}_{J_{1,1}})(\underbrace{001|1}_{J_{1,2}})10}^{J_1}
\big|\overbrace{(\underbrace{110|0}_{J_{2,1}})(\underbrace{010|0}_{J_{2,2}})11}^{J_2}
\big|
\cdots
\big|\overbrace{(\underbrace{000|0}_{J_{7,1}})(\underbrace{101|0}_{J_{7,2}})10}^{J_7}\big| 0001...
\]
Each block has length $10$, and each subblock has length $4$.
Each subblock has a search window of length $3$ and a tail of length $1$.
% The boundaries between the blocks are indicated by thick vertical lines, the boundaries between the subblocks by brackets, and the first and second halves of each subblock are separated by thin vertical lines.
\end{example}

% Let $w_r \in \{0,1\}^m$ be the binary expansion of $0 \leq s(r) < 2^m$.
We will say ``\textit{$J_{r,i}$ hits $w_r$}'' if the search window of $J_{r,i}$ contains a consecutive subsequence that coincides with $w_r$.
In Example~\ref{ex:first}, $J_{1,1}$ and $J_{1,2}$ hit $001$, and $J_{2,2}$ hits $010$.

Recall that the doubling map $f$ corresponds to the shift of the binary expansion.
Then, we have
\begin{equation}\label{eq:cond_Jri}
\text{$J_{r,i}$ hits $w_r$} \iff 
\text{there exists at least one $1\leq j\leq S-m+1$ such that 
$a_{rT+(i-1)B+j}\in I_r$
}.
\end{equation}

Let $K_r$ denote the number of subblocks contained in $J_r$ that hit $w_r$:
\[
K_r:=\#\left\{1\leq i\leq\left\lfloor T/B\right\rfloor:J_{r,i}\text{ hits }w_r\right\}.
\]
Here \(K_r\) is a random variable on \([0,1)\) with respect to Lebesgue measure; its dependence on \(x\) and \(w_r\) is suppressed from the notation.
%(Note that $K_r=K_r(x)$ is a random variable on the probability space $[0,1)$ equipped with the Lebesgue measure, which also depends on $w_r$, while we suppress them from the notation for simplicity.)
%\marginpar{\tiny 英語}
Because the search windows of distinct subblocks are disjoint and the digits \(b_i\) are independent, the corresponding hit events are independent.
Therefore, the random variable $K_r$ follows the binomial distribution 
\begin{equation}\label{eq:bin_dist}
K_r \sim \mathrm{Bin}(\lfloor T/B \rfloor, p_r),
\end{equation}
where $p_r$ is the probability
\begin{equation}\label{eq:def_of_p}
p_r = \frac{\#(\text{binary words of length $S$ hitting $w_r$})}{2^S}.
\end{equation}

To evaluate the probability $p_r$ more precisely, we define the \textit{period} of binary words.
% \begin{equation}
% \text{$J_{r,i}$ hits $w_r$} \iff 
%  1 \leq \exists j \leq B \ \text{s.t.}\ 
% (a_{rT+(i-1)B+j} \in I_r).
% \end{equation}
\begin{defi}
For a binary word
$w=b_1b_2\cdots b_m$,
define its (minimal) period by
\[
\chi(w):=\min\left\{1\leq c\leq m: b_i=b_{i+c} \text{ for every }1\leq i\leq m-c\right\}.
\]
In particular, we have \(\chi(w)=m\) if \(w\) has no period smaller than its length.
\end{defi}

% For brevity, we write
% \[
% C(r):=C(w_r).
% \]

\begin{lemma}\label{lemma:ineq}
Let $\chi=\chi(w_r)$ be the period of a binary word $w_r\in \mathcal{W}_m$.
Then, the probability $p_r$ defined in \eqref{eq:def_of_p} satisfies the inequality
\[
p_r \geq \frac{(S-m)}{2^m} - \frac{(S-m)^2}{2^{m+\chi}}.
\]
\end{lemma}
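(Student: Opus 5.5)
Write $n:=S-m+1$ for the number of starting positions in the search window. Let $E_j$ be the event that the window has $w_r$ at positions $j,\dots,j+m-1$ ($1\le j\le n$), so $\PP(E_j)=2^{-m}$ and $p_r=\PP\bigl(\bigcup_{j=1}^n E_j\bigr)$. The plan is to apply the second Bonferroni inequality to this union:
\[
p_r\;\geq\;\sum_{j=1}^n \PP(E_j)-\sum_{1\le j<k\le n}\PP(E_j\cap E_k)
\;=\;\frac{n}{2^m}-\sum_{j<k}\PP(E_j\cap E_k).
\]
Since $n=S-m+1\ge S-m$, the first term is at least $(S-m)/2^m$.

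Everything then reduces to showing
\[
\sum_{j<k}\PP(E_j\cap E_k)\le \frac{(S-m)^2}{2^{m+\chi}},
\]
with some slack to absorb the extra $2^{-m}$ gained from $n=S-m+1$. We bound each pair according to the gap $d=k-j$.

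\emph{Overlapping pairs} ($d<m$). The event $E_j\cap E_k$ forces $w_r$ to have period $d$, so it is empty unless $d\ge\chi$. When $d\ge \chi$, the union of the two occurrences spans $m+d$ digits, all determined, so the probability is at most $2^{-(m+d)}\le 2^{-(m+\chi)}$.

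\emph{Disjoint pairs} ($d\ge m$). The two events are independent, so the probability is $2^{-2m}\le 2^{-(m+\chi)}$, using $\chi\le m$.

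Thus every pair contributes at most $2^{-(m+\chi)}$. There are $\binom{n}{2}=\frac{(S-m+1)(S-m)}{2}$ pairs, which is at most $(S-m)^2$ as soon as $S-m\ge1$. If instead $S=m$, then $n=1$, there are no pairs, and $p_r=2^{-m}\ge 0$, so the bound holds trivially. Combining gives
\[
p_r\ge \frac{S-m}{2^m}-\frac{(S-m)^2}{2^{m+\chi}}.
\]

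The main obstacle I anticipate is bookkeeping rather than substance. One point is justifying that an overlap at gap $d$ with $d<\chi$ is impossible, which is exactly the minimality in the definition of $\chi$: a consistent overlap at gap $d$ would mean $b_i=b_{i+d}$ for all $i\le m-d$. The other is checking that the count of pairs is dominated by $(S-m)^2$ in every case. A finer count, summing $2^{-(m+d)}$ over $d\ge\chi$, would give a stronger bound, but the crude uniform bound suffices for the statement.
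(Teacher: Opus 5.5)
Your proof is correct and matches the paper's argument: a second-order inclusion--exclusion over the $S-m+1$ starting positions, with each pairwise intersection bounded by $2^{-(m+\chi)}$, using minimality of the period for overlapping occurrences and $\chi\le m$ for disjoint ones. The only small differences are that the paper counts just the $\binom{S-m-\chi+2}{2}$ pairs with gap at least $\chi$, whereas you count all $\binom{S-m+1}{2}$ pairs (both are at most $(S-m)^2$), and that you handle the degenerate case $S=m$ explicitly.
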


\begin{proof}
Let
\(
A = \{ \text{binary words of length $S$ that hit $w_r$}\} \).
Then we have $p_r=|A|/2^S$.
For each $1\leq i\leq S-m+1$, we also define
\[
A_i = \left\{
\begin{aligned}
&\text{binary words of length $S$ whose entries}\\ 
&\text{from the $i$-th to the $(i+m-1)$-th positions is $w_r$}
\end{aligned}
\right\}.
\]
By the inclusion-exclusion principle, we have
\[\begin{aligned}
|A| &\geq \sum_{i=1}^{S-m+1} |A_i| - \sum_{1 \leq i < j \leq S-m+1} |A_i \cap A_j|\\
&= (S-m+1)2^{S-m} - \sum_{i<j} |A_i \cap A_j|.
\end{aligned}
\]
If $\chi\leq j-i<m$, the two occurrences of $w_r$ prescribe the digits
on the union of their positions, which consists of $m+(j-i)$ numbers,
and hence
\[
|A_i \cap A_j| \leq 2^{S-m-(j-i)} \leq 2^{S-m-\chi}.
\]
If $j-i\geq m$, the two occurrences prescribe $2m$ digits at disjoint
positions, and hence
\[
|A_i \cap A_j| = 2^{S-2m} \leq 2^{S-m-\chi},
\]
where the last inequality follows from $\chi\leq m$.
Thus, in either case, we have $|A_i\cap A_j|\leq 2^{S-m-\chi}$.

Since the number of pairs $(i,j)$ satisfying $j - i \geq \chi$ and $1 \leq i < j \leq S-m+1$ is $\binom{S-m-\chi+2}{2}$, we obtain
\[
\begin{aligned}
|A|
&\geq (S-m+1)2^{S-m} - \binom{S-m-\chi+2}{2} 2^{S-m-\chi} \\
&\geq (S-m)2^{S-m} - (S-m)^2 2^{S-m-\chi}.
\end{aligned}
\]
From this, we conclude the desired inequality.
\end{proof}

\begin{lemma}\label{lemma:ev_of_EK}
Suppose that
\[
2^{\chi(w_r)}\gg S,
\qquad
S\gg m,
\qquad
N\gg 2^mB,
\qquad
NS\gg 4^mB.
\]
Then
\[
\EE[K_r]
\geq
(1-o(1))\frac{NS}{4^mB},
\qquad
\EE[\sqrt{K_r}]
\geq
(1-o(1))
\frac{\sqrt{NS}}{2^m\sqrt B},
\qquad
\EE[K_r^3]
\leq
\left\lfloor\frac{T}{B}\right\rfloor^3.
\]
\end{lemma}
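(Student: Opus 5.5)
The plan is to read everything off the binomial law \eqref{eq:bin_dist}, $K_r\sim\mathrm{Bin}(n,p_r)$ with $n:=\lfloor T/B\rfloor$, together with the lower bound on $p_r$ from Lemma~\ref{lemma:ineq}. The third-moment bound is immediate: $0\le K_r\le n$ pointwise, so $\EE[K_r^3]\le n^3=\lfloor T/B\rfloor^3$.

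For the first moment, I would write $\EE[K_r]=n p_r$ and estimate the two factors separately. Since $T=\lfloor N/2^m\rfloor$ and $N\gg 2^mB$, we have $n\ge T/B-1\ge N/(2^mB)-1/B-1=(1-o(1))N/(2^mB)$. By Lemma~\ref{lemma:ineq},
\[
p_r\ge \frac{S-m}{2^m}\Bigl(1-\frac{S-m}{2^{\chi}}\Bigr)=(1-o(1))\frac{S}{2^m}.
\]
Here the factor $S-m=(1-o(1))S$ because $S\gg m$, and $(S-m)/2^{\chi}\le S/2^{\chi(w_r)}=o(1)$ because $2^{\chi(w_r)}\gg S$. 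Multiplying the two estimates gives $\EE[K_r]\ge(1-o(1))NS/(4^mB)$.

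For $\EE[\sqrt{K_r}]$, concavity (Jensen) runs the wrong way, so I would use concentration instead. Put $\mu:=\EE[K_r]$ and note $\mu\to\infty$ by the hypothesis $NS\gg 4^mB$. Chebyshev with $\mathrm{Var}(K_r)=np_r(1-p_r)\le\mu$ gives
\[
\PP\bigl(K_r<(1-\delta)\mu\bigr)\le \frac{1}{\delta^2\mu}.
\]
Hence
\[
\EE[\sqrt{K_r}]\ge \sqrt{(1-\delta)\mu}\,\Bigl(1-\frac{1}{\delta^2\mu}\Bigr).
\]
Choosing $\delta=\mu^{-1/3}\to0$ makes both factors $1-o(1)$. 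Then $\sqrt{\mu}\ge(1-o(1))\sqrt{NS}/(2^m\sqrt B)$ follows from the first-moment bound.

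An alternative for this last step is the elementary inequality $\sqrt{k}\ge \sqrt{\mu}\,\bigl(1+\tfrac{k-\mu}{2\mu}-\tfrac{(k-\mu)^2}{2\mu^2}\bigr)$, which holds for $k\ge0$, combined with the variance bound. Either way the main care point is this step: making sure the $o(1)$ terms depend only on $S/2^{\chi}$, $m/S$, $2^mB/N$ and $4^mB/(NS)$. That way they are uniform in $r$ among the words satisfying the hypothesis, which is how the lemma will presumably be summed over $r$ later.
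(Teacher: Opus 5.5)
Your proof is correct and follows essentially the same route as the paper. Both arguments use the binomial law with Lemma~\ref{lemma:ineq} for $p_r$, then $\mathrm{Var}(K_r)\le\EE[K_r]\to\infty$ for the square root, then the pointwise bound $K_r\le\lfloor T/B\rfloor$ for the third moment. The only real difference is the elementary step for $\EE[\sqrt{K_r}]$. Writing $\mu=\EE[K_r]$, the paper uses $|\sqrt{x}-\sqrt{y}|\le\sqrt{|x-y|}$ and Jensen to get $\EE[\sqrt{K_r}]\ge\sqrt{\mu}-\mu^{1/4}$. Your Chebyshev truncation, or your quadratic minorant (which even gives $\sqrt{\mu}\,(1-\tfrac{1}{2\mu})$), does the same job. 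Your explicit handling of the floor in $\lfloor T/B\rfloor$ via $N\gg 2^mB$ is slightly more careful than the paper's.
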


\begin{proof}
By Lemma~\ref{lemma:ineq} and the assumptions,
\[
\begin{aligned}
p_r
\geq
\frac{S-m}{2^m}
-
\frac{(S-m)^2}{2^{m+\chi(w_r)}} 
=
\frac{S-m}{2^m}
\left(
1-\frac{S-m}{2^{\chi(w_r)}}
\right)
=
(1-o(1))\frac{S}{2^m}.
\end{aligned}
\]
Since \(T=\lfloor N/2^m\rfloor\), 
%and \(N\gg 2^mB\),
it follows that
\[
\EE[K_r]
=
\left\lfloor\frac{T}{B}\right\rfloor p_r
\geq
(1-o(1))\frac{NS}{4^mB}.
\]
In particular, the last assumption implies that
\(\EE[K_r]\to\infty\).

Since \(K_r\) is binomial,
\[
\mathrm{Var}(K_r)\leq\EE[K_r].
\]
Moreover, for \(x,y\geq0\),
\[
|\sqrt{x}-\sqrt{y}|
\leq
\sqrt{|x-y|}.
\]
Therefore,
\[
\begin{aligned}
\EE[\sqrt{K_r}]
&\geq
\sqrt{\EE[K_r]}
-
\EE\left[
\left|
\sqrt{K_r}-\sqrt{\EE[K_r]}
\right|
\right] \\
&\geq
\sqrt{\EE[K_r]}
-
\mathrm{Var}(K_r)^{1/4} \\
&\geq
\sqrt{\EE[K_r]}
-
\EE[K_r]^{1/4} \\
&=
(1-o(1))\sqrt{\EE[K_r]} \\
&\geq
(1-o(1))
\frac{\sqrt{NS}}{2^m\sqrt B}.
\end{aligned}
\]
Finally,
$
0\leq K_r\leq
\left\lfloor\frac{T}{B}\right\rfloor
$
implies
$
\EE[K_r^3]
\leq
\left\lfloor\frac{T}{B}\right\rfloor^3.
$
\end{proof}

\subsection{Comparison with the uniform permutation}\label{sec:lower_uniform_comparison}

For each $0\leq r<R=2^m$, define the random set \(\mathcal K_r\) by
\[
\mathcal K_r
:=
\left\{
1\leq i\leq \left\lfloor T/B\right\rfloor:
J_{r,i}\text{ hits }w_r
\right\}.
\]
Recall $K_r=\#\mathcal K_r$.
%We suppress the dependence of the objects below on the initial point \(x\).
For each \(i\in\mathcal K_r\), let \(\tau(r,i)\) be the first time at
which \(w_r\) occurs in the search window of \(J_{r,i}\).
More precisely,
\[
\tau(r,i)
:=
\min\left\{
\tau:
rT+(i-1)B+1\leq \tau
\leq rT+(i-1)B+S-m+1,\quad
a_\tau\in I_r
\right\}.
\]

Let \(y_{r,i}\) be the binary word obtained by taking the first \(m+t\) digits of \(a_{\tau(r,i)}\).
By definition $\tau(r,i)$, the first \(m\) digits of $y_{r,i}$ must coincide with \(w_r\).
We denote its last \(t\) digits by $\eta_{r,i}$.
Then, $y_{r,i}$ is the concatenation of $w_r$ and $\eta_{r,i}$.
% \[
% \eta_{r,i}
% :=
% b_{\tau(r,i)+m}
% b_{\tau(r,i)+m+1}
% \cdots
% b_{\tau(r,i)+m+t-1}.
% \]
Since \(B=S+t\), the whole word \(y_{r,i}\) is contained in the
subblock \(J_{r,i}\).

Seeing the initial value $x\in [0,1)$ as a Bernoulli digit, the stopping time for the event $i\in \mathcal{K}_r$ is \(\tau(r,i)+m-1\).
%for the Bernoulli digits in this subblock.
Moreover, conditional on \(\mathcal K_r\) and on the first hitting times \((\tau(r,i))_{i\in\mathcal K_r}\), the tail words
$
(\eta_{r,i})_{i\in\mathcal K_r}
$
are independent and uniformly distributed on \(\{0,1\}^{t}\).

When the tail words \(\eta_{r,i}\) are pairwise
distinct for each $i\in \mathcal{K}_r$, the relative order with respect to \(\succ\) of the values
\(a_{\tau(r,i)}\) is determined by \(\eta_{r,i}\).
Otherwise, deeper digits may be needed to compare some of the selected values.
We denote this ``bad event'' by
\[
\mathrm{Coll}_r
:=
\left\{
x\in[0,1):
\exists\,i<j,\quad
i,j\in\mathcal K_r(x),\quad
\eta_{r,i}(x)=\eta_{r,j}(x)
\right\}.
\]

\begin{lemma}\label{lemma:birth}
For $0\leq k\leq \lfloor T/B\rfloor$, we have
\[
\mathbb{P}(\mathrm{Coll}_r\mid K_r=k)\leq\binom{k}{2}2^{-t}.
\]
\end{lemma}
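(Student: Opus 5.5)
The plan is a birthday-problem union bound, applied after conditioning on the full hit structure rather than on $K_r$ alone. The paper has already asserted that, conditional on $\mathcal K_r$ and on $(\tau(r,i))_{i\in\mathcal K_r}$, the tail words $(\eta_{r,i})_{i\in\mathcal K_r}$ are independent and uniform on $\{0,1\}^t$. I would first make this precise and then average over the conditioning.

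\textbf{Step 1: the conditional independence claim.} For a fixed subblock $J_{r,i}$, the event $\{i\in\mathcal K_r,\ \tau(r,i)=\tau\}$ depends only on the digits of $x$ in positions $rT+(i-1)B+1,\dots,\tau+m-1$. These positions lie inside the search window of $J_{r,i}$, since $\tau\le rT+(i-1)B+S-m+1$. The word $\eta_{r,i}$ consists of the digits in positions $\tau+m,\dots,\tau+m+t-1$. These positions come strictly after the stopping time and, because $B=S+t$, still lie inside $J_{r,i}$.

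For each $i$ and each admissible value $\tau$, the digits of $J_{r,i}$ after position $\tau+m-1$ are therefore independent of the event $\{\tau(r,i)=\tau\}$. This is the strong Markov property for i.i.d. Bernoulli digits, and it can be checked directly on cylinders.

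Distinct subblocks use disjoint digit sets. So, conditioning on the full configuration $\mathcal K_r=\mathcal K$ and $(\tau(r,i))_{i\in\mathcal K}=(\tau_i)$, an event of positive probability, the words $(\eta_{r,i})_{i\in\mathcal K}$ are i.i.d. uniform on $\{0,1\}^t$. To verify this, I would write the joint probability of the conditioning event together with prescribed values $\eta_{r,i}=e_i$. This probability factorizes over subblocks, and within each subblock the prescribed $t$ digits contribute a factor $2^{-t}$.

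\textbf{Step 2: the union bound on a fixed configuration.} On a fixed configuration with $\#\mathcal K=k$, I would apply the union bound over the $\binom{k}{2}$ pairs $i<j$. For a single pair, $\mathbb P(\eta_{r,i}=\eta_{r,j}\mid\cdots)=\sum_{e}2^{-t}\cdot2^{-t}=2^{-t}$. Summing over pairs gives
\[
\mathbb P(\mathrm{Coll}_r\mid \mathcal K_r=\mathcal K,\ (\tau(r,i))=(\tau_i))\le \binom{k}{2}2^{-t}.
\]

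\textbf{Step 3: averaging.} The event $\{K_r=k\}$ is the disjoint union of the configurations with $\#\mathcal K=k$. Averaging the bound of Step 2 over these configurations, weighted by their conditional probabilities given $K_r=k$, yields the lemma. When $\mathbb P(K_r=k)=0$ the statement is vacuous, and for $k\le1$ both sides are $0$ or trivially consistent.

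\textbf{Main obstacle.} The only delicate point is Step 1. One must make sure that the stopping-time event does not depend on the digits forming $\eta_{r,i}$. This holds because $\tau$ is the \emph{first} occurrence of $w_r$, so determining it never requires looking beyond position $\tau+m-1$. The tail positions also never spill into the next subblock. Both facts come from the choice $B=S+t$ and the range $\tau\le rT+(i-1)B+S-m+1$.
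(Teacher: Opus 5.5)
Your proof is correct and follows the paper's argument. Both rest on the conditional i.i.d.\ uniformity of the tails $\eta_{r,i}$ given the hit configuration; the paper computes the exact no-collision probability $\prod_{j=0}^{k-1}(1-j2^{-t})$ and bounds it below by $1-\binom{k}{2}2^{-t}$, whereas you apply the union bound over pairs directly, and your explicit conditioning on $(\mathcal K_r,(\tau(r,i))_{i\in\mathcal K_r})$ followed by averaging over configurations makes rigorous a step the paper leaves implicit.
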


\begin{proof}
If $K_r=k$, the conditional probability that the complementary event $\mathrm{Coll}^c_r$ occurs is
\[
\mathbb{P}(\mathrm{Coll}^c_r\mid K_r=k)=\frac{2^{t}(2^{t}-1)(2^t-2)\cdots (2^{t}-k+1)}{(2^t)^k}\geq 1-\binom{k}{2}2^{-t}.
\]
The desired inequality follows from this.
\end{proof}

We order the elements of $\mathcal{K}_r$ in increasing order as
\[
\mathcal{K}_r=\{
i_{r,1}<i_{r,2}<\dots<i_{r,K_r}
\}.
\]
Then, from the above discussion, we obtain the following proposition.

\begin{prop}\label{prop:near_unif}
Under $\mathrm{Coll}^c_r$, the distribution of the relative order with respect to $\succ$ of the sequence
\[
a_{\tau(r,i_{r,1})},a_{\tau(r,i_{r,2})},\dots,a_{\tau(r,i_{r,K_r})}
\]
coincides with the distribution of the relative order of a uniform random sequence of length $K_r$.
\end{prop}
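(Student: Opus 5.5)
We condition on the subblock-level data and show that the $\succ$-order pattern depends only on the tail words, which are i.i.d.\ uniform. Throughout, $r$ is fixed.

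\textbf{Step 1: the conditioning.} Condition on the random set $\mathcal K_r=\{i_{r,1}<\dots<i_{r,k}\}$ and on the first hitting times $(\tau(r,i))_{i\in\mathcal K_r}$. The event that these take given values is determined by the digits of each subblock up to its stopping time $\tau(r,i)+m-1$, together with the subblocks that do not hit $w_r$.

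\textbf{Step 2: the tail words are i.i.d.\ uniform.} By the strong Markov property for i.i.d.\ Bernoulli digits, the digits after $\tau(r,i)+m-1$ in $J_{r,i}$ are independent of this conditioning. This gives the fact stated before Lemma~\ref{lemma:birth}: conditionally, the words $\eta_{r,i_{r,1}},\dots,\eta_{r,i_{r,k}}$ are i.i.d.\ uniform on $\{0,1\}^t$. The point to check is that $\tau(r,i)+m+t-1\le rT+iB$, so each tail stays inside its own subblock. This holds because $\tau(r,i)\le rT+(i-1)B+S-m+1$ and $B=S+t$.

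\textbf{Step 3: the order pattern under $\mathrm{Coll}^c_r$.} Each selected value $a_{\tau(r,i)}$ lies in $C_{w_r\eta_{r,i}}$. On $\mathrm{Coll}^c_r$ these are $k$ distinct cylinders of the same length $m+t$. By \eqref{eq:property_one} and Lemma~\ref{lemma:cylinder_property}, the $\succ$-order of the $a_{\tau(r,i)}$ coincides with the $\succ$-order of the cylinders $C_{w_r\eta}$. These cylinders are totally ordered, so the permutation equals $\mathrm{Pat}_\succ$ of the words $(\eta_{r,i_{r,j}})_j$ under a fixed total order on $\{0,1\}^t$. Hence, conditionally on $K_r=k$ and $\mathrm{Coll}^c_r$, the pattern is the relative order of $k$ i.i.d.\ uniform elements of $\{0,1\}^t$, conditioned on being pairwise distinct.

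\textbf{Step 4: comparison with a uniform random sequence.} Such a conditioned sample is exchangeable, with all entries distinct. Its relative order is therefore a uniform permutation in $S_k$, since each ordering of a distinct $k$-set is equally likely. A uniform random sequence in $[0,1)$ of length $k$ also has a uniform permutation as its relative order, because ties have probability zero. Proposition~\ref{prop:LIS_succ} (equivalently Lemma~\ref{lemma:ord_isom}) shows this remains true when the pattern is measured with respect to $\succ$. Thus the two distributions agree for each $k$, and mixing over $K_r$ gives the statement.

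\textbf{Main obstacle.} The delicate point is Step 2: justifying rigorously that conditioning on the first hitting times leaves the tails i.i.d.\ uniform. One must handle the dependence between the search windows of different subblocks and the non-hit information. This is done by decomposing the event by subblock and using the product structure of $\nu$.
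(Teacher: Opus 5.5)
Your proposal is correct and follows the same route as the paper. The tails $\eta_{r,i}$ are i.i.d.\ uniform on $\{0,1\}^t$ conditional on $\mathcal K_r$ and the first hitting times, by the stopping-time observation plus your check that each tail stays inside its own subblock; and on $\mathrm{Coll}^c_r$ the $\succ$-order of the selected points is read off from the distinct cylinders $C_{w_r\eta_{r,i}}$ via \eqref{eq:property_one}. Your Step 4 (a uniform injective tuple is exchangeable, so its pattern is uniform on $S_k$) makes explicit what the paper leaves implicit; the precise citation for the $\succ$-pattern of an i.i.d.\ sample is Lemma~\ref{lemma:ord_isom}, since Proposition~\ref{prop:LIS_succ} as stated concerns LIS lengths rather than patterns.
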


By Proposition~\ref{prop:LIS_succ} and Proposition~\ref{prop:near_unif}, we conclude that, under $\mathrm{Coll}^c_r$, the expected length of the $\succ$-LIS contained in
\begin{equation}\label{eq:pickuped_sec}
a_{\tau(r,i_{r,1})},a_{\tau(r,i_{r,2})},\dots,a_{\tau(r,i_{r,K_r})}
\end{equation}
coincides with that for a uniform random permutation of length $K_r$.

\subsection{A lower bound of $\EE[\lambda_{\succ,1}^{(N)}]$}

In what follows, we use the following known theorem.

\begin{thm}\label{thm:known}
For a uniform random permutation $\pi\in S_n$, let $U_n(\pi)$ denote the length of the LIS contained in $\pi$.
Then, for any $\epsilon>0$, there exist some positive constants $c(\epsilon),n(\epsilon)>0$ such that
\[
n\geq n(\epsilon)\ \Rightarrow\ 
\PP\left(\left|\frac{U_n}{\sqrt{n}}-2\right|>\epsilon\right)<\exp(-c(\epsilon)\sqrt{n}).
\]
In particular, as $n\to \infty$, we have $(2-\epsilon)\sqrt{n}+o(1)<\EE[U_n]<(2+\epsilon)\sqrt{n}+o(1)$.
\end{thm}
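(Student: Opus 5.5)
The statement is classical, so I would assemble it from two standard ingredients rather than prove it from scratch: the first-order asymptotics of the mean, and a concentration inequality of Talagrand type. The first ingredient is the Logan--Shepp and Vershik--Kerov theorem already cited in the introduction, which gives $\EE[U_n]/\sqrt n\to 2$. For the second, I would realize $\pi$ as $\mathrm{Pat}(a_1,\dots,a_n)$ with $a_i$ i.i.d.\ uniform on $[0,1)$, so that $U_n$ becomes a function of independent coordinates.

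This function has two properties. First, it is $1$-Lipschitz in the Hamming sense: changing one coordinate changes $U_n$ by at most $1$. Second, it is certifiable: the event $\{U_n\geq k\}$ is witnessed by $k$ coordinates, namely the entries of an increasing subsequence. Talagrand's convex-distance inequality then gives, with $M_n$ a median of $U_n$,
\[
\PP(U_n\geq M_n+s)\leq 2\exp\!\left(-\frac{s^2}{4(M_n+s)}\right),\qquad
\PP(U_n\leq M_n-s)\leq 2\exp\!\left(-\frac{s^2}{4M_n}\right).
\]

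The key steps, in order, are as follows. First, I will show $M_n=O(\sqrt n)$. The first-moment count $\EE[\#\{\text{increasing subsequences of length }k\}]=\binom nk/k!\leq (e^2n/k^2)^k$ shows $\PP(U_n\geq 3\sqrt n)$ is exponentially small, so $M_n\leq 3\sqrt n$. Second, I will use the tail bounds above with $s=\epsilon\sqrt n/2$, which gives $\PP(|U_n-M_n|\geq \epsilon\sqrt n/2)\leq 4\exp(-c'\epsilon^2\sqrt n)$. Integrating these tails also yields $|\EE[U_n]-M_n|=O(n^{1/4})$. Third, I will combine this with $\EE[U_n]/\sqrt n\to2$ to get $|M_n-2\sqrt n|<\epsilon\sqrt n/2$ for $n\geq n(\epsilon)$. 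The claimed bound $\PP(|U_n/\sqrt n-2|>\epsilon)<\exp(-c(\epsilon)\sqrt n)$ then follows, after shrinking the constant to absorb the factor $4$.

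The expectation statement is immediate from the tail bound. On the good event, $U_n/\sqrt n\in[2-\epsilon,2+\epsilon]$. On the bad event we have $0\leq U_n\leq n$, so the bad event contributes at most $n\exp(-c(\epsilon)\sqrt n)=o(1)$ to $\EE[U_n]$.

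The only genuinely deep input is the limit $\EE[U_n]/\sqrt n\to2$, which I take as known. The main technical point is applying Talagrand's inequality in its certifiable form correctly, since a naive bounded-differences (McDiarmid) estimate only gives fluctuations of order $\sqrt n$ and hence no useful tail at scale $\epsilon\sqrt n$. If one prefers to avoid Talagrand, the same $\exp(-c\sqrt n)$ rate can be quoted from the large-deviation results of Deuschel--Zeitouni and Seppäläinen for the upper tail, and from the speed-$n$ lower-tail large deviations of the Plancherel measure.
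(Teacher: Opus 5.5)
Your proposal is correct, but it takes a different route from the paper. The paper's proof just quotes the two large-deviation theorems of Deuschel and Zeitouni:
\begin{itemize}
\item the lower tail decays at speed $n$, $\lim_n \frac1n\log\PP(U_n<(2-\epsilon)\sqrt n)<0$, so it is at most $e^{-cn}\le e^{-c\sqrt n}$;
\item the upper tail decays at speed $\sqrt n$.
\end{itemize}
Combining the two gives the tail bound directly.

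You instead take only the first-order law $\EE[U_n]/\sqrt n\to 2$ (Logan--Shepp, Vershik--Kerov) as the deep input. You then obtain the tails from Talagrand's inequality for Hamming-Lipschitz, certifiable functions, and re-centre from the median to $2\sqrt n$ via the $O(n^{1/4})$ median--mean gap. The supporting steps check out: the first-moment bound $(e^2n/k^2)^k$ with $e^2<9$, the choice $s=\epsilon\sqrt n/2$, and the tail integration.

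What your route buys: it is more structural, since it shows that concentration alone yields an $\exp(-c\sqrt n)$ bound at scale $\epsilon\sqrt n$ once the constant $2$ is known. It also gives an explicit $c(\epsilon)\asymp\epsilon^2$. The price is a much weaker lower tail ($e^{-c\epsilon^2\sqrt n}$ rather than the true speed-$n$ decay) and no information about rate functions. Neither is needed here: the paper uses the theorem only through the upper-tail estimate behind \eqref{eq:ev_W_2} and the expectation bounds in the lower-bound section. The paper's citation is shorter, but it leans on large-deviation theorems whose proofs already contain the identification of the constant $2$.

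One small remark: your final paragraph, which derives the expectation statement from the tail bound, is valid but redundant. That statement follows immediately from the input $\EE[U_n]/\sqrt n\to2$ that you assumed at the outset.
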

\begin{proof}
This follows from the large deviation estimates of
Deuschel and Zeitouni~\cite{DeuschelZeitouni1999}.
Indeed, their Theorem~1 gives the lower-tail estimate
\[
\lim_{n\to\infty}\frac1n
\log \PP\{U_n<(2-\epsilon)\sqrt n\}<0,
\]
while their Theorem~2 gives the upper-tail estimate
\[
\lim_{n\to\infty}\frac1{\sqrt n}
\log \PP\{U_n>(2+\epsilon)\sqrt n\}<0.
\]
Combining these two bounds yields the stated estimate.
\end{proof}

Let $W_r$ denote the length of the $\succ$-LIS contained in the sequence \eqref{eq:pickuped_sec}.
By Proposition~\ref{prop:near_unif}, we have
\[
\mathbb{E}[W_r\mid K_r=k]\geq \mathbb{E}[U_k]\cdot\mathbb{P}(\mathrm{Coll}^c_r\mid K_r=k).
\]
This inequality leads
\begin{align*}
\mathbb{E}[W_r]
&\geq \sum_{k} \mathbb{E}[U_k]
\cdot\mathbb{P}(\mathrm{Coll}^c_r\mid K_r=k)\cdot 
\mathbb{P}(K_r=k)\\
&\geq \sum_{k} \mathbb{E}[U_k]\cdot 
\left(1-\binom{k}{2}2^{-t}\right)
\mathbb{P}(K_r=k)\qquad \text{(Lemma \ref{lemma:birth})}\\
&\geq 
\sum_k\EE[U_k]\PP(K_r=k)-2^{-t}\sum_k k\cdot \binom{k}{2}\mathbb{P}(K_r=k)\\
&\geq 
\sum_k\EE[U_k]\PP(K_r=k)-2^{-t}\mathbb{E}[K_r^3].
\end{align*}
On the other hand, from Theorem~\ref{thm:known} and the trivial bound $U_k \le k$, we have
%\marginpar{\tiny $\EE[\sqrt{K_r}]\to \infty$を使いたい}
\begin{align*}
\sum_k\EE[U_k]\PP(K_r=k)
&>(2-\epsilon)\sum_k \sqrt{k}\PP(K_r=k) + \sum_{0\le k\le n(\epsilon)}\left(\EE[U_k]-(2-\epsilon)\sqrt{k}\right)\PP(K_r=k) \\
&\ge (2-\epsilon)\EE[\sqrt{K_r}] -n(\epsilon)^2
\end{align*}
for any $\epsilon>0$.
From these inequalities, we derive
\begin{equation}\label{eq:EE[U]}
\EE[W_r]\geq (2-\epsilon)\EE[\sqrt{K_r}]-2^{-t}\EE[K_r^3] -n(\epsilon)^2.    
\end{equation}

Let $L_r$ be the random variable in \eqref{eq:lambda1_vs_L}, the length of the $\succ$-LIS contained in the subsequence
\[
(a_i:i\in J,a_i\in I_r)=
I_r\cap \{a_{rT+1},a_{rT+2},\dots,a_{(r+1)T}\}
.\]
Since this subsequence contains the sequence \eqref{eq:pickuped_sec}, we have $L_r \geq W_r$, which implies $\mathbb{E}[L_r]\geq \mathbb{E}[W_r]$.
% By \eqref{eq:EE[U]}, under the condition $2^{C(r)} \gg S$, we obtain
% \begin{align*}
% \mathbb{E}[L_r]
% &\geq 
% \mathbb{E}[W_r]\\
% &\geq \mathbb{E}[(2-\epsilon)\sqrt{K_r}]-2^{-t}\mathbb{E}[K_r^3]\\
% &\geq (2-\epsilon)(1-o(1))\sqrt{\mathbb{E}[K_r]}-2^{-\ell}\mathbb{E}[K_r^3].
% \end{align*}
Therefore, from \eqref{eq:lambda1_vs_L} and \eqref{eq:EE[U]}, we have
\begin{equation}\label{eq:Elam_vs_E}
\mathbb{E}[\lambda_{\succ,1}^{(N)}]\geq \sum_{r=0}^{R-1}\mathbb{E}[L_r]\geq 
\sum_{r=0}^{R-1}\mathbb{E}[W_r]
\geq 
\sum_{r=0}^{R-1}
\left(
(2-\epsilon)\EE[\sqrt{K_r}]-2^{-t}\EE[K_r^3]-n(\epsilon)^2
\right).
\end{equation}
% which gives a computable lower bound of $\mathbb{E}[\lambda_{\succ,1}^{(N)}]$.

\subsection{Evaluation of the lower bound as $N\to \infty$}

We then consider the behavior of the value
\[
\mathcal{E}:=
\sum_{r=0}^{R-1}
\left(
(2-\epsilon)\EE[\sqrt{K_r}]-2^{-t}\EE[K_r^3]-n(\epsilon)^2
\right)
\]
as $N\to \infty$.

Let $P$ be an arbitrary integer satisfying $1\leq P\leq m$.
For any $p$ equal to or smaller than $m$, the number of binary words of length $m$ of period $p$ is at most $2^p$.
Therefore, the number of binary words of length $m$ whose period is \textit{smaller than} $P$ is at most
\(
2+2^2+\dots+2^{P-1}<2^{P}
\).
% \color{blue}
% Therefore, under the condition $2^P\gg S\gg m$ and $T\gg B$, the following inequality holds by Lemma~\ref{lemma:ev_of_EK}:
%\color{red}
Therefore, under the conditions
\[
2^P\gg S\gg m,
\qquad
T\gg B,
\qquad
NS\gg 4^mB,
\]
the following inequality holds by
Lemma~\ref{lemma:ev_of_EK}:
\color{black}
\begin{equation}\label{eq:ev_LL}
\begin{aligned}
\mathcal{E}
&\geq \sum_{w_r\,:\,\chi(w_r)\geq P}
(2-\epsilon)\EE[\sqrt{K_r}]-
\sum_{r=0}^{R-1} 2^{-t}\EE[K_r^3]
-\sum_{r=0}^{R-1}n(\epsilon)^2
\\
&\geq (2-\epsilon)(1-o(1))
\sum_{w_r\,:\,\chi(w_r)\geq P}
\sqrt{\mathbb{E}[K_r]}-2^{-t}\sum_r\mathbb{E}[K^3_r]
-Rn(\epsilon)^2
\\
&\sim  
(2-\epsilon)(1-o(1))
(2^m-2^P)
\frac{\sqrt{NS}}{2^{m}\sqrt{B}}
-2^{-t}\sum_r\mathbb{E}[K^3_r]
-Rn(\epsilon)^2\\
&\geq 
(2-\epsilon)(1-o(1))
(2^m-2^P)
\frac{\sqrt{NS}}{2^{m}\sqrt{B}}
-(1+o(1))2^{m-t}\left(\frac{N}{2^mB}\right)^3
-2^mn(\epsilon)^2.
\end{aligned}
\end{equation}

% \color{blue}
% If we choose the parameters as
% \[
% m \sim \frac{1}{2}\log_2 N,\quad 
% S \sim N^{1/10},\quad 
% \ell \sim 100\log_2 N,\quad 
% P \sim \frac{1}{4}\log_2 N\qquad (N\to \infty),
% \]
% then the condition $2^P \gg S \gg m$ is satisfied. In addition, we have
% \[
% B \sim N^{1/10},\quad 
% 1 \leq P \leq m,\quad 
% \]
% \[
% T=\left\lfloor \frac{N}{2^m} \right\rfloor = O(N^{1/2}) \to \infty,\quad 
% \left\lfloor \frac{T}{B} \right\rfloor = O(N^{2/5}) \to \infty.
% \]
% In this situation, we can evaluate $\mathcal{E}$ by using \eqref{eq:ev_LL} as
% \[
% \mathcal{E}
% \geq (2-\epsilon)\sqrt{N}-o(1)
% \sim (2-\epsilon)\sqrt{N}.
% \]
% From \eqref{eq:Elam_vs_E}, we obtain the desired lower bound.

If we choose the parameters as
\begin{equation}\label{eq:20}
m\sim \frac25\log_2N,
\quad
P\sim \frac14\log_2N,
\quad
S\sim N^{1/10},
\quad
t\sim 100\log_2N,
\quad
B=S+t,
\end{equation}
we have
\[
2^P\gg S\gg m,
\quad
\frac{T}{B}\sim N^{1/2},
\quad
\frac{NS}{4^mB}
\sim
N^{1/5}
\]
as $N\to \infty$.
Thus Lemma~\ref{lemma:ev_of_EK} applies uniformly to every
\(r\) satisfying \(\chi(r)\geq P\).
From
\[
\frac{2^P}{2^m}\to0,
\quad
\frac{S}{B}\to1,
\quad
2^{m-t}
\left(\frac{N}{2^mB}\right)^3
=o(\sqrt N),\quad
2^mn(\epsilon)^2=O(N^{2/5})=o(\sqrt{N})
\]
and \eqref{eq:ev_LL}, we obtain
\[
\mathcal E\geq(2-\epsilon-o(1))\sqrt N,
\]
which proves the desired lower bound by \eqref{eq:Elam_vs_E}.

\section{Upper bound}

To obtain an upper bound, we need a more careful argument.
We will define a complete prefix set $\mathcal{W}$ in a more refined way.

\subsection{Definition of the complete prefix set $\mathcal{W}(P,Q)$}

Fix natural numbers $1<P<Q$, which will serve as thresholds.
A binary word $b_1b_2\ldots b_\ell$ of length $\ell\geq P$ is said to have a \emph{short period} if it has a period smaller than $P$.

For a binary word $w$, let $\len(w)$ denote the length of $w$.
\begin{defi}\label{defi:W}
Let $\mathcal{W}(P,Q)$ denote the set of binary sequences $w$ satisfying one of the following properties~(i)--(iii):
\begin{enumerate}
\item[(i)] $\len(w)=P$, and $w$ has no short period.
\item[(ii)] 
$P<\len(w)\leq Q$, $w$ has no short period, but the subsequence obtained by deleting its last digit has a short period.
\item[(iii)] 
$\len(w)=Q$, and $w$ has a short period.
\end{enumerate}

We write $\mathcal{W}_{\mathrm{regular}}(P,Q)\subset \mathcal{W}(P,Q)$ for the subset consisting of those $w$ satisfying~(i) or~(ii), and $\mathcal{W}_{\mathrm{exceptional}}(P,Q)\subset \mathcal{W}(P,Q)$ for the subset consisting of those $w$ satisfying~(iii).
For a binary code $w\in\mathcal{W}(P,Q)$, denote by $C_w\subset [0,1)$ the cylinder defined in \eqref{eq:def_of_cyl}.
% \[
% I_w=\{x\in [0,1):\text{the first $\len(w)$ digits in the binary expansion of $x$ coincide with $w$}\}.
% \]
\end{defi}

% For a binary word $w$, an \textit{extension of $w$} is a binary word that begins with $w$.
% \begin{equation}\label{eq:w_eq_w}
% x\in C_{w}\cap C_{w'}\text{ for some 
% $x\in [0,1)$ and 
% $w,w'\in \mathcal{W}(P,Q)$, then }
% w=w'.
% \end{equation}

\begin{lemma}\label{lemma:division}
Every element of $[0,1)$ belongs to exactly one cylinder $C_w$ with $w\in \mathcal{W}(P,Q)$.
In other words, $\mathcal{W}(P,Q)$ is a complete prefix set.
\end{lemma}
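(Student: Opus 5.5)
The plan is to argue on the level of infinite binary sequences. Take $x\in[0,1)$ with binary expansion $0.b_1b_2b_3\cdots$, and write $u_\ell:=b_1\cdots b_\ell$ for its prefix of length $\ell$. Then $x\in C_w$ if and only if $w=u_{\len(w)}$. Hence the lemma amounts to the following: for every infinite sequence, exactly one prefix length $\ell$ has $u_\ell\in\mathcal W(P,Q)$. I will show existence and uniqueness separately.

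The key monotonicity fact is that having a short period passes to prefixes. Suppose $c<P$ is a period of $u_\ell$, i.e.\ $b_i=b_{i+c}$ for all $i\leq\ell-c$. Then $c$ is also a period of $u_{\ell'}$ for every $\ell'\leq\ell$. Contrapositively, if $u_\ell$ has no short period, then no extension of it has one. Consequently the set $\{\ell\geq P: u_\ell\text{ has no short period}\}$ is an up-set: it is either empty or of the form $\{\ell\geq\ell_0\}$ for some $\ell_0\geq P$.

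For existence, I split into cases according to $\ell_0$.
\begin{itemize}
\item If $\ell_0=P$, then $u_P$ satisfies (i).
\item If $P<\ell_0\leq Q$, then $u_{\ell_0}$ has no short period but $u_{\ell_0-1}$ does, since $\ell_0-1\geq P$. So $u_{\ell_0}$ satisfies (ii).
\item If $\ell_0>Q$, or the up-set is empty, then $u_Q$ has a short period and satisfies (iii).
\end{itemize}

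For uniqueness, any prefix $u_\ell$ in $\mathcal W(P,Q)$ has $P\leq\ell\leq Q$. If it is of type (i) or (ii), then $\ell$ must equal $\ell_0$. For type (i) this holds because $\ell=P$ and $u_P$ has no short period. For type (ii), $u_\ell$ has no short period while $u_{\ell-1}$ (of length $\geq P$) does, so $\ell$ is exactly the threshold $\ell_0$. If it is of type (iii), then $\ell=Q$ and $u_Q$ has a short period, which forces $\ell_0>Q$ or the up-set to be empty. These descriptions are mutually exclusive and each pins down $\ell$ uniquely, so at most one prefix lies in $\mathcal W(P,Q)$.

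Finally, disjointness of the cylinders and their covering of $[0,1)$ follow directly. By Lemma~\ref{lemma:cylinder_property}(i), two cylinders could only intersect if one word were a prefix of the other, and then a point in the intersection would have two prefixes in $\mathcal W(P,Q)$, which is excluded. Finiteness is clear because every word in $\mathcal W(P,Q)$ has length at most $Q$.

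The only delicate point is the boundary bookkeeping: checking that $\ell_0-1\geq P$ in case (ii), and that when $\ell_0>Q$ the word $u_Q$ genuinely has a short period. Both follow from the up-set structure.
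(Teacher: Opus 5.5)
Your proof is correct and follows essentially the same route as the paper: your threshold $\ell_0$ is the paper's $m^\ast$, existence uses the same three-way case split into (i)/(ii)/(iii), and uniqueness rests on the same fact that having no short period is inherited by extensions. The only difference is one of phrasing. The paper states uniqueness as prefix-freeness of $\mathcal{W}(P,Q)$ combined with Lemma~\ref{lemma:cylinder_property}(i), whereas you pin down the admissible prefix length directly in terms of $\ell_0$; you also note finiteness explicitly, which the paper leaves implicit.
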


\begin{proof}
For $x\in [0,1)$, let 
\(
x=0.c_1c_2c_3\dots
\)
be the binary expansion and  $w_m(x):=c_1c_2\dots c_m$ be its first $m$ digits.
Define
% \color{blue}
% \[
% m^\ast:=\min\{m:\text{$x(m)$ has no short period}\}.
% \]
\[
m^\ast
:=
\min\left(
\{m\geq P:w_m(x)\text{ has no short period}\}
\cup\{\infty\}
\right).
\]
Put $w:=w_{\min\{m^\ast,Q\}}(x)$. 
Then, we have $w\in \mathcal{W}(P,Q)$ and $x\in C_w$.
If $m^\ast= P$, then $w$ satisfies (i); if $P<m^\ast\le Q$, then $w$ satisfies (ii); if $m^\ast > Q$, then $w$ satisfies (iii).
To show the uniqueness, note that if a binary word has no short period, its extensions also have no short period.
Therefore, if $w$ is an element of $\mathcal{W}(P,Q)$, any extension of $w$ is not an element of $\mathcal{W}(P,Q)$.
Then, from Lemma \ref{lemma:cylinder_property}, we find that $C_w\cap C_{w'}\neq \emptyset$ for some $w,w'\in \mathcal{W}(P,Q)$ implies $w=w'$.

% The uniqueness follows from \eqref{eq:w_eq_w}.
% \marginpar{\tiny exactly oneの証明についてもう一言コメントできれば嬉しい（が）$\to$最短は$m^\ast$の定義変更とのこと}
% If $m^\ast=P$, then $x$ belongs to some $I_w$ with property~(i).
% If $P<m^\ast<Q$, then $x$ belongs to some $I_w$ with property~(ii).
% If $m^\ast\geq Q$, then $x$ belongs to some $I_w$ with property~(iii).
% To prove uniqueness, observe that if a word has no short period, then
% none of its extensions has a short period. %otherwise, a short period
% % of the extension would also be a short period of the original word.
% %Hence \(\mathcal W\) is prefix-free.
% Therefore, if \(x\in C_u\cap C_v\) for \(u,v\in\mathcal W\), then \(u\) and \(v\)
% are prefixes of the same binary expansion and are therefore comparable under the prefix relation.
% The prefix-free property implies \(u=v\).
\end{proof}

\begin{example}\label{ex:decomp24}
Let $P=2$, $Q=4$, and $\mathcal{W}:=\mathcal{W}(2,4)$. Then, we have
\[
\mathcal{W}_{\mathrm{regular}}=\{01,10,001,110,0001,1110\},\qquad
\mathcal{W}_{\mathrm{exceptional}}=\{0000,1111\}.
\]
In this case,
\[
C_{01}=\left[\frac{1}{4},\frac{1}{2}\right),\quad
C_{10}=\left[\frac{1}{2},\frac{3}{4}\right),
\]
\[
C_{001}=\left[\frac{1}{8},\frac{1}{4}\right),\quad
C_{110}=\left[\frac{3}{4},\frac{7}{8}\right),
\]
\[
C_{0001}=\left[\frac{1}{16},\frac{1}{8}\right),\quad
C_{1110}=\left[\frac{7}{8},\frac{15}{16}\right),\quad
C_{1111}=\left[\frac{15}{16},1\right),\quad
C_{0000}=\left[0,\frac{1}{16}\right).
\]

%\input{figures/fig_word_partition}
\input{fig_word_partition}
\end{example}

\begin{example}\label{ex:decomp36}
Let $P = 3$, $Q = 6$, and $\mathcal{W}:=\mathcal{W}(3,6)$.
Binary sequences with a short period have period $1$ $(000\ldots$, $111\ldots)$ or period $2$ $(0101\ldots$, $1010\ldots)$. In this case, $\mathcal{W}$ consists of 20 words, and $[0,1)$ is divided into 20 intervals (see Figure \ref{fig:word_partition2}).
Notably, the exceptional words in $\mathcal{W}_{\text{exceptional}} = \{000000, 010101, 101010, 111111\}$ are related to
%the words $010101$ and $101010$ sit at 
the period-$2$ orbit $\tfrac{1}{3} = 0.010101\ldots$ and $\tfrac{2}{3} = 0.101010\ldots$ of the doubling map. 
%They generate a fine subdivision near the middle of $[0,1)$.

%\input{figures/fig_word_partition2}
\input{fig_word_partition2}
\end{example}

\begin{example}
Let $P=2$ and $Q=4$ as in Example \ref{ex:decomp24}.
If $\succ$ is the total order for the tent map defined in \eqref{eq:def_of_G_order}, we have
\[
C_{0000}\prec C_{0001}\prec C_{001}\prec C_{01}\prec C_{110}\prec C_{1111}\prec C_{1110}
\prec C_{10}.
\]
\end{example}

% \color{blue}
% \begin{prop}\label{prop:ev_R}
% The inequality
% \[
% R< 2^P(Q-P+2)
% \]
% holds.
% \end{prop}

% \begin{proof}
% The number of the sequences $w$ satisfying~(i) is at most $2^P$.
% A sequence $w$ satisfying~(ii) can be given by its first $P$ digits and length $\len(w)$, because the sequence $w$ except for the last digit has a short period. 
% As $P< \len(w)\leq Q$, the number of such $w$ is at most $2^P(Q-P)$.
% The number of the sequences $w$ satisfying~(iii) is at most $2^P$.
% Then, their total number is at most
% \[
% 2^P+2^P(Q-P)+2^P<2^P(Q-P+2).
% \]
% \end{proof}

% \begin{lemma}\label{lemma:sum_of_powers_of_2}
% The inequality
% \[
% \sum_{w\in \mathcal{W}_{\mathrm{regular}}}2^{\len(w)}<2^{P+Q+1}
% \]
% holds.
% \end{lemma}

% \begin{proof}
% As mentioned in the proof of Proposition \ref{prop:ev_R},  for each $P\leq m\leq Q$, the number of elements $w\in \mathcal{W}_{\mathrm{regular}}$ with $\len(w)=m$ is at most $2^P$.
% Hence, we have
% \[
% \begin{aligned}
% \sum_{w\in \mathcal{W}_{\mathrm{regular}}}2^{\len(w)}
% &\leq 
% 2^P\cdot 2^P+
% \sum_{m=P+1}^{Q}2^{m}\cdot 2^P
% <2^{P+Q+1}.
% % \\
% % &
% % <2^{2P}+2\cdot 2^Q\cdot 2^P
% % <3\cdot 2^{P+Q}.    
% \end{aligned}
% \]
% \end{proof}

%\color{red}

Let $R:=\# \mathcal{W}(P,Q)$ be the size of the complete prefix set $\mathcal{W}(P,Q)$.
In the following, we will use the abbreviation $\mathcal{W}=\mathcal{W}(P,Q)$ if there is no fear of confusion.

\begin{prop}\label{prop:ev_R}
We have
\[
R<2^{P+1}(Q-P+1).
\]
\end{prop}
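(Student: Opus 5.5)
We will count the words in $\mathcal W(P,Q)$ separately according to the three types (i)--(iii) of Definition~\ref{defi:W}, and bound each class by a multiple of $2^P$.

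\emph{Type (i) and type (iii).} Words of type~(i) have length exactly $P$, so there are at most $2^P$ of them. For type~(iii), the key observation is that a word with a short period $c<P$ is determined by its first $c\le P-1$ digits: every later digit is forced by $b_i=b_{i+c}$. Hence, for each fixed length $\ell\ge P$, the number of words of length $\ell$ having a short period is at most $\sum_{c=1}^{P-1}2^c<2^P$. This is the same count already used in the lower-bound section for words of length $m$. Applying it with $\ell=Q$ shows that there are fewer than $2^P$ words of type~(iii).

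\emph{Type (ii).} Fix a length $\ell$ with $P<\ell\le Q$. A type-(ii) word $w$ of length $\ell$ is obtained from a word $w^-$ of length $\ell-1\ge P$ that has a short period, by appending one digit. By the observation above, there are fewer than $2^P$ choices for $w^-$ and two choices for the appended digit. This gives fewer than $2^{P+1}$ words.

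The bound can in fact be sharpened to at most $2^P$ words of type~(ii) per length. If $w^-$ has a short period $c$, then the continuation digit $b_{\ell-c}$ keeps period $c$. So among the two extensions of $w^-$, at most one can have no short period. To make this rigorous, I would argue that appending the digit $b_{\ell-c}$ preserves a short period, so at most one extension can be of type~(ii). I would also note that the weaker bound of $2^{P+1}$ per length is already sufficient for the stated inequality, so the sharpening is not needed.

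\emph{Summing.} Using the weaker bound of $2^{P+1}$ per length for type~(ii), the total is
\[
R< 2^P + 2^{P+1}(Q-P) + 2^P = 2^{P+1}(Q-P+1).
\]

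The only step needing care is the counting of words of a given length with a short period. I expect no real obstacle here, and the remaining bookkeeping of the three classes is routine.
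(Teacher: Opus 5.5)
Your proof is correct and is essentially the paper's argument: type~(i) contributes at most $2^P$ words. Type~(iii) words and type-(ii) prefixes are bounded by the count $\sum_{c=1}^{P-1}2^c<2^P$ of words with a short period, doubled for the appended digit in type~(ii), and summing gives $2^P+2^{P+1}(Q-P)+2^P$. Your sharpening is also valid: the extension that continues the period keeps a short period, so at most one extension of each prefix is of type~(ii). As you note, it is not needed.
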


\begin{proof}
There are at most \(2^P\) words satisfying~(i).
For any $n\geq P$, the number of binary words of length $n$ that have a short period is at most $2+2^2+\cdots+2^{P-1}<2^P$.
Therefore, the number of binary words satisfying (ii) is at most $2\cdot \sum_{n=P}^{Q-1}2^P=2^{P+1}(Q-P)$.
Finally, since the number of binary words satisfying (iii) is at most $2^P$, we conclude
\[
R<2^P+2^{P+1}(Q-P)+2^P=2^{P+1}(Q-P+1).
\]

% Fix \(m\in\{P+1,\dots,Q\}\).
% If a word \(w\) of length \(m\) satisfies~(ii), then the prefix obtained by deleting its last digit has a period \(c<P\).
% For each fixed \(c\), such a prefix is determined by its first \(c\)
% digits and hence has at most \(2^c\) possibilities.
% Therefore, the number of possible prefixes is at most
% \[
% \sum_{c=1}^{P-1}2^c<2^P.
% \]
% Each prefix has two one-digit extensions, so there are fewer than
% \(2^{P+1}\) words of length \(m\) satisfying~(ii).
% Summing over the \(Q-P\) possible values of \(m\), the total number of
% words satisfying~(ii) is less than
% \[
% 2^{P+1}(Q-P).
% \]

% Finally, the number of words satisfying~(iii) is less than
% \[
% \sum_{c=1}^{P-1}2^c<2^P.
% \]
% Consequently,
% \[
% R
% <
% 2^P+2^{P+1}(Q-P)+2^P
% =
% 2^{P+1}(Q-P+1).
% \]
\end{proof}

\begin{lemma}\label{lemma:sum_of_powers_of_2}
We have
\[
\sum_{w\in\mathcal W}2^{\len(w)}
<
2^{P+Q+3}.
\]
\end{lemma}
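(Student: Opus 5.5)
We bound the sum by splitting $\mathcal W=\mathcal W(P,Q)$ according to the three types (i)--(iii) in Definition~\ref{defi:W} and the word length $n=\len(w)$. The counting is the same as in the proof of Proposition~\ref{prop:ev_R}.

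Type (i) words have length exactly $P$, and there are at most $2^P$ of them. Their contribution is therefore at most $2^P\cdot 2^P=2^{2P}$.

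For type (ii), fix a length $n$ with $P<n\le Q$. Such a $w$ is obtained by appending one digit to a word of length $n-1\ge P$ that has a short period. As noted in the proof of Proposition~\ref{prop:ev_R}, for each $c<P$ a word with period $c$ is determined by its first $c$ digits. Hence there are fewer than $2+2^2+\cdots+2^{P-1}<2^P$ such prefixes, and at most $2^{P+1}$ words of type (ii) of length $n$. Their contribution is at most
\[
\sum_{n=P+1}^{Q}2^{P+1}2^{n}<2^{P+1}\cdot 2^{Q+1}=2^{P+Q+2}.
\]

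Type (iii) words have length $Q$ and have a short period, so there are fewer than $2^P$ of them. Their contribution is less than $2^{P+Q}$.

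Adding the three pieces, and using $P<Q$ so that $2^{2P}<2^{P+Q}$, gives
\[
\sum_{w\in\mathcal W}2^{\len(w)}<2^{P+Q}+2^{P+Q+2}+2^{P+Q}=6\cdot 2^{P+Q}<2^{P+Q+3}.
\]
No step is a real obstacle. The only point needing care is the geometric sum $\sum_{n\le Q}2^n<2^{Q+1}$, together with the bound of $2^{P+1}$ on the number of type (ii) words of each fixed length.
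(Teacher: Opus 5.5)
Your proof is correct and follows essentially the same counting as the paper. You split $\mathcal W$ by types (i)--(iii), bound the type (ii) words of each length by $2^{P+1}$, sum the geometric series, and use $2^{P+Q}$ for the exceptional words. The only cosmetic difference is that you absorb the type (i) term $2^{2P}$ into $2^{P+Q}$ using $P<Q$, whereas the paper absorbs it through the exact geometric sum $2^{2P}+2^{P+1}(2^{Q+1}-2^{P+1})<2^{P+Q+2}$.
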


\begin{proof}
For the regular words, the words satisfying~(i) contribute at most
\(2^{2P}\).
For each \(n\in\{P+1,\dots,Q\}\), there are fewer than \(2^{P+1}\) words of length \(n\) satisfying~(ii).
Hence
\[
\begin{aligned}
\sum_{w\in\mathcal W_{\mathrm{regular}}}2^{\len(w)}
&\leq
2^{2P}
+
2^{P+1}\sum_{m=P+1}^{Q}2^m
% &=
% 2^{2P}
% +
% 2^{P+1}\left(2^{Q+1}-2^{P+1}\right)\\
<
2^{P+Q+2}.
\end{aligned}
\]
On the other hand, there are fewer than \(2^P\) exceptional words, all of length \(Q\).
Therefore
\[
\sum_{w\in\mathcal W_{\mathrm{exceptional}}}2^{\len(w)}
<
2^{P+Q}.
\]
Combining the two estimates gives
\[
\sum_{w\in\mathcal W}2^{\len(w)}
<
2^{P+Q+2}+2^{P+Q}
<
2^{P+Q+3}.
\]
\end{proof}

\color{black}

The following lemma is immediate:
\begin{lemma}\label{lemma:2_minusL}
We have
\[
\sum_{w\in \mathcal{W}}2^{-\len(w)}=1. 
\]
\end{lemma}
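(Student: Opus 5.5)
The identity follows directly from Lemma~\ref{lemma:division}, which says that $\mathcal W=\mathcal W(P,Q)$ is a complete prefix set. The cylinders $C_w$ ($w\in\mathcal W$) are therefore pairwise disjoint and cover $[0,1)$. Since $\mathcal W$ is finite, additivity of Lebesgue measure gives
\[
1=\operatorname{Leb}([0,1))=\sum_{w\in\mathcal W}\operatorname{Leb}(C_w).
\]

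It remains to compute $\operatorname{Leb}(C_w)$. By \eqref{eq:def_of_cyl}, $C_w=\beta([w]\setminus\mathfrak z)$. The map $\beta$ is measure-preserving, and $\mathfrak z$ is countable and hence $\nu$-null, so $\operatorname{Leb}(C_w)=\nu([w])=2^{-\len(w)}$. Equivalently, $C_w$ is the half-open dyadic interval $[k2^{-\len(w)},(k+1)2^{-\len(w)})$, where $k$ is the integer whose binary representation is $w$; a binary expansion beginning with $w$ lies exactly in this interval once the tails of all $1$'s are excluded. Substituting this into the displayed sum yields $\sum_{w\in\mathcal W}2^{-\len(w)}=1$.

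No real obstacle is expected. The only point needing care is the measure-zero bookkeeping: the excluded expansions ending in $1$'s, and the fact that $\beta$ is a bijection only off $\mathfrak z$. Both are already handled by the setup preceding Lemma~\ref{lemma:ord_isom}.

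Alternatively, one could bypass measure theory with a Kraft-type counting argument. Let $Q$ be the maximal word length in $\mathcal W$. Each word of length $Q$ has exactly one prefix in $\mathcal W$, because the cylinders partition $[0,1)$ and the cylinder of a length-$Q$ word is contained in a unique $C_w$ by Lemma~\ref{lemma:cylinder_property}. Each $w\in\mathcal W$ has exactly $2^{Q-\len(w)}$ extensions of length $Q$. Counting the $2^Q$ words of length $Q$ according to their prefix in $\mathcal W$ then gives the identity after dividing by $2^Q$.
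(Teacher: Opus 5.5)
Your proof is correct and follows the paper's own argument. The paper's one-line proof likewise combines the partition of $[0,1)$ by the cylinders $C_w$, $w\in\mathcal W$ (Lemma~\ref{lemma:division}), with the fact that each $C_w$ is a dyadic interval of width $2^{-\len(w)}$. You simply make that width computation explicit, and your alternative Kraft-type count over words of maximal length is also valid.
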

\begin{proof}
This lemma immediately follows from the fact that $C_w$ is an interval of width $2^{-\len(w)}$.
\end{proof}

\subsection{The expected length of $\succ$-LIS}\label{sec:ev_small_block}

We will derive an upper bound for $\lambda_{\succ,1}^{(N)}$ by applying Proposition~\ref{prop:decomp} to $\mathcal{W}=\mathcal{W}(P,Q)$.
Let
\[
x=a_1,a_2,a_3,\dots,a_N\in [0,1),\qquad \text{where }   a_{n+1}=f(a_n)
\]
be the sequence generated by the doubling map $f$.

For a while, we fix an arbitrary $w\in \mathcal{W}$.
%Let $\len(w)$ be the length of $w$.
%Note $P\leq \len(w) \leq Q$.
Also, we fix an interval $J=\{u,u+1,\dots,v-1\}\subset \{1,2,\dots,N\}$.
Let
\(
n:=\# J=v-u
\)
be the size of $J$.

From the subsequence $a_u,a_{u+1},\dots,a_{v-1}$, we extract all terms that are contained in the cylinder $C_w$:
\begin{equation}\label{eq:seq}
\{a_{\tau_1},a_{\tau_2},\dots,a_{\tau_{M_w}}\}
:= 
(a_i:i\in J,a_i\in C_w)=C_w\cap  \{a_u,a_{u+1},\dots,a_{v-1}\}.
\end{equation}
where $M_w$ is the size of the set and
\[
u\leq \tau_1<\tau_2<\dots<\tau_{M_w}<v.
\]
$M_w$ is allowed to be $0$. 

The distribution of the subsequence
$a_{\tau_1},a_{\tau_2},\dots,a_{\tau_{M_w}}$ is \textit{not} independent because these first $\ell(w)$ digits may overlap in the binary expansion of the initial value $x$.
To obtain an independent subsequence, we need to remove some elements from $a_{\tau_1},a_{\tau_2},\dots,a_{\tau_{M_w}}$ that are too close to other element.

Let \(D>\len(w)\) be a positive integer, which will serve as a threshold.
If \(M_w=0\), set
\(M_w'=0\).  If \(M_w>0\), set \(\sigma_1:=\tau_1\).  Having chosen
\(\sigma_j\), let \(\sigma_{j+1}\) be the first \(\tau_i\) satisfying
\[
\tau_i\geq\sigma_j+D,
\]
provided that such a \(\tau_i\) exists.  This produces a subsequence
\begin{equation}\label{eq:subseq}
\{a_{\sigma_1},a_{\sigma_2},\dots,a_{\sigma_{M_w'}}\}
\subset
\{a_{\tau_1},a_{\tau_2},\dots,a_{\tau_{M_w}}\},
\end{equation}
where \(M_w'\leq M_w\) and
\[
u\leq\sigma_1<\sigma_2<\cdots<\sigma_{M_w'}<v,
\qquad
\sigma_{j+1}-\sigma_j\geq D.
\]

Let $L_w$ denote the length of the $\succ$-LIS contained in the sequence $\{a_{\tau_1},a_{\tau_2},\dots,a_{\tau_{M_w}}\}$, and let $W_w$ denote the length of the $\succ$-LIS contained in the sequence $\{a_{\sigma_1},a_{\sigma_2},\dots,a_{\sigma_{M'_w}}\}
$.
Obviously, we have
\begin{equation}\label{eq:L_vs_W}
\begin{array}{ccc}
M_w&\geq &M'_w\\
\rotatebox{90}{$\leq$}&&\rotatebox{90}{$\leq$}\\
L_w&\geq &W_w
\end{array}
\end{equation}

Although $L_w$ is directly related to the evaluation of $\lambda_{\succ,1}^{(N)}$, it is not easy to compute. 
Instead, we will work with $W_w$.
The following inequality is crucial.

\begin{lemma}\label{lemma:combin_W_L}
The random variables $M_w,M_w',W_w,L_w$ satisfy the inequality
\[
L_w\leq W_w+M_w-M_w'.
\]
\end{lemma}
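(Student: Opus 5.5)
The inequality is purely combinatorial: it holds for every fixed initial point $x$, and no probability is involved. The idea is that deleting elements from a sequence can only shorten its longest increasing subsequence by at most the number of deleted elements.

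First I would record the inclusion of index sets. Every $\sigma_j$ is one of the $\tau_i$, and the $\sigma_j$ are increasing, so the sequence $(a_{\sigma_1},\dots,a_{\sigma_{M_w'}})$ is a subsequence of $(a_{\tau_1},\dots,a_{\tau_{M_w}})$. It is obtained by deleting exactly the terms indexed by $E:=\{\tau_1,\dots,\tau_{M_w}\}\setminus\{\sigma_1,\dots,\sigma_{M_w'}\}$. This set has cardinality $M_w-M_w'$, since both index lists consist of distinct times.

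Next, fix a longest $\succ$-increasing subsequence of $(a_{\tau_i})$. It has length $L_w$, and its times $\kappa_1<\dots<\kappa_{L_w}$ all lie in $\{\tau_i\}$. Remove from it the terms whose times lie in $E$; at most $\#E=M_w-M_w'$ terms are removed. The remaining terms have times in $\{\sigma_j\}$, appear in increasing time order, and are still $\succ$-increasing, because a subsequence of a $\succ$-increasing sequence is $\succ$-increasing. They therefore form a $\succ$-increasing subsequence of $(a_{\sigma_j})$ of length at least $L_w-(M_w-M_w')$. By the definition of $W_w$ as the maximal such length, $W_w\geq L_w-M_w+M_w'$, which is the claim.

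I do not expect any real obstacle. The only point to check is the convention when $M_w=0$: then $M_w'=0$ and $L_w=W_w=0$. It is also worth confirming that the thinning procedure selects only times in $J$ belonging to $\{\tau_i\}$. This holds by construction, since each $\sigma_{j+1}$ is chosen among the $\tau_i$.
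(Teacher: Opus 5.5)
Your proposal is correct and follows essentially the same argument as the paper: fix a $\succ$-LIS of $(a_{\tau_i})$, delete the at most $M_w-M_w'$ terms not retained by the thinning, and note that what remains is a $\succ$-increasing subsequence of $(a_{\sigma_j})$, giving $W_w\geq L_w-(M_w-M_w')$. Your explicit bookkeeping with the deleted index set $E$ and its cardinality is a slightly more careful version of the paper's remark that the removed terms number $M_w-M_w'$.
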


\begin{proof}
Fix one of the $\succ$-LIS contained in $\{a_{\tau_1},a_{\tau_2},\dots,a_{\tau_{M_w}}\}$, whose length is $L_w$.
Whenever one removes some elements from it, the remaining sequence is also an increasing sequence.
Therefore, $\{a_{\sigma_1},a_{\sigma_2},\dots,a_{\sigma_{M'_w}}\}
$ contains an increasing subsequence of length at least $L_w-(M_w-M'_w)$.
% Since the total number of terms removed in the above thinning procedure is $M_w-M_w'$, the length of this remaining sequence is at most $L_w-(M_w-M_w')$.
Hence, we have $W_w\geq L_w-(M_w-M'_w)$, which implies the desired inequality.
\end{proof}

The binary expansion of the initial point \(x\) is written as
\[
x=0.b_1b_2b_3\ldots \big|
\underbrace{
b_u b_{u+1}\ldots
\overbrace{b_{\sigma_1}b_{\sigma_1+1}\ldots}^{\geq D}
\overbrace{b_{\sigma_2}\ldots}^{\geq D}
\overbrace{b_{\sigma_3}\ldots}^{\geq D}
\ldots
b_{v+\len(w)-2}
}_{}
\big|\ldots .
\]
The underbrace indicates the portion of the binary expansion involved
in determining whether or not \(a_j\in C_w\) for \(j\in J\).
For each \(1\leq i\leq M_w'\), define a binary word \(y_i\) of length
\(D\) by
\[
y_i
:=
b_{\sigma_i}b_{\sigma_i+1}\cdots b_{\sigma_i+D-1}.
\]
Thus \(y_i\) is obtained by taking the \(D\) consecutive digits of the
binary expansion of \(x\) starting at the position \(\sigma_i\).
Since \(a_{\sigma_i}\in C_w\), the first \(\len(w)\) digits of \(y_i\)
coincide with \(w\).
Put
\[
t:=D-\len(w),
\]
and denote the last \(t\) digits of \(y_i\) by \(\eta_i\).
We call \(\eta_i\) the \emph{tail} of \(y_i\).

The number \(\sigma_i\) is determined by the digits inspected while
searching for the first occurrence of \(w\) after the preceding
restart time; in particular, this uses digits only up to
\(\sigma_i+\len(w)-1\).
By construction, the search for \(\sigma_{i+1}\) starts only at
\(\sigma_i+D\).
Hence the digits forming \(\eta_i\), namely those at positions
\(\sigma_i+\len(w),\dots,\sigma_i+D-1\), are used neither to determine
\(\sigma_i\) nor to determine any of
\(\sigma_{i+1},\dots,\sigma_{M_w'}\).
The strong Markov property of the Bernoulli digits therefore shows
that, conditional on \(M_w'\) and on the values of
$
\sigma_1,\sigma_2,\dots,\sigma_{M_w'},
$
the tail words
\[
\eta_1,\eta_2,\dots,\eta_{M_w'}
\]
are independent and uniformly distributed on \(\{0,1\}^t\).

Let
\[
\mathrm{Coll}_w:=\{\exists (i,j):(i\neq j)\wedge (y_i=y_j)\}
\]
denote the event that at least two of $y_1,y_2,\dots,y_{M'_w}$ are equal.
As in Lemma~\ref{lemma:birth}, we have
\[
\PP(\mathrm{Coll}_w\,|\,M_w'=k)\leq \binom{k}{2}2^{-t}.
\]
For the same reason as in Proposition~\ref{prop:near_unif}, under $\mathrm{Coll}_w^c$, the distribution of the length of the $\succ$-LIS in 
\(\{a_{\sigma_1},a_{\sigma_2},\dots,a_{\sigma_{M'_w}}\}
\), which is denoted by $W_w$, coincides with the distribution of the length of the LIS contained in a uniform random permutation of length $M_w'$.
Therefore, by Theorem \ref{thm:known}, for any $\epsilon>0$, there exist constants $c(\epsilon)>0$ and $k_0(\epsilon)$ such that
\[
k\geq k_0(\epsilon)\ \Rightarrow\ 
\mathbb{P}\left(W_w>(2+\epsilon)\sqrt{M_w'}\,\middle|\,\mathrm{Coll}_w^c\wedge (M_w'=k)\right)<\exp(-c(\epsilon)\sqrt{k}).
\]
Hence, we have
\begin{equation*}
\begin{aligned}
\mathbb{P}\left(W_w>(2+\epsilon)\sqrt{M_w'}\,\middle|\,M_w'=k\geq k_0(\epsilon)\right)
&\leq \binom{k}{2}2^{-t}+
\exp(-c(\epsilon)\sqrt{k})\\
&\leq \binom{N}{2}2^{-t}+
\exp(-c(\epsilon)\sqrt{k}),
\end{aligned}
\end{equation*}
where the second inequality follows from $M_w'\leq N$.
% Since  we further obtain
% \begin{equation*}
% \begin{aligned}
% \mathbb{P}\left(W_w>(2+\epsilon)\sqrt{M_w'}\,\middle|\,M_w'=k\geq k_0(\epsilon)\right)
% &\leq \binom{N}{2}2^{-t}+
% \exp(-c(\epsilon)\sqrt{k}).
% \end{aligned}
% \end{equation*}
Moreover, since $\exp(-c(\epsilon)\sqrt{k})$ is a monotone decreasing function of $k$, this can be rewritten as
\begin{equation}\label{eq:ev_L_0}
\begin{aligned}
\mathbb{P}\left(W_w>(2+\epsilon)\sqrt{M_w'}\,\middle|\,M_w'\geq k\geq k_0(\epsilon)\right)
&\leq \binom{N}{2}2^{-t}+
\exp(-c(\epsilon)\sqrt{k}).
\end{aligned}
\end{equation}

In what follows, we will use the following inequality obtained by substituting $k=N^{1/10}$ into \eqref{eq:ev_L_0}:
\begin{equation}\label{eq:ev_W_2}
\begin{aligned}
\mathbb{P}\left(W_w>(2+\epsilon)\sqrt{M_w'}\,\middle|\,M_w'\geq N^{1/10}\geq k_0(\epsilon)\right)
&\leq \binom{N}{2}2^{-t}+
\exp\left(-c(\epsilon)N^{1/20}\right).
\end{aligned}
\end{equation}
As $N\to\infty$, the condition $N^{1/10}\geq k_0(\epsilon)$ is automatically satisfied.
In addition, since $M_w'<N^{1/10}$ implies $W_w<N^{1/10}$, we have
\begin{equation}\label{eq:ev_L_2}
\begin{aligned}
\mathbb{P}\left(W_w>(2+\epsilon)\sqrt{M_w'}+N^{1/10}\right)
&\leq 
\binom{N}{2}2^{-t}+
\exp(-c(\epsilon)N^{1/20}).
\end{aligned}
\end{equation}
% From this, we derive the inequality
% \begin{equation}\label{eq:ev_L}
% \begin{aligned}
% \mathbb{P}\left(W_w>(2+\epsilon)\sqrt{M_w'}+N^{1/10}\right)
% &< 2^{-t}N^3\qquad (N\to \infty).
% \end{aligned}
% \end{equation}

\subsection{Estimates of the random variables $M_w$ and $M_w-M'_w$}

In this subsection, we will give estimates of the two random variables $M_w$ and $M_w-M'_w$.
For this, we use the following 
%\emph{Bernstein inequality}
Bernstein inequality for independent (but not necessarily identically distributed) Bernoulli variables:
\begin{lemma}
%[Bernstein bound for Bernoulli variables]
\label{lem:CB-extract}
Let $\xi_1,\dots,\xi_n$ be independent Bernoulli random variables.
%\marginpar{\tiny YN: ここで略記を色々導入していますが、後で使わないなら消してもいいかも？}
% Then for any $u>0$, we have
% \begin{equation}\label{eq:CB-upper}
% \PP(X-\mu\ge u)
% \le
% \exp\!\left(-\frac{u^2}{2(\mu+u/3)}\right)
% \le
% \exp\!\left(
% -\frac{3}{8}\min\!\left\{\frac{u^2}{\mu},\,u\right\}
% \right).
% \end{equation}
Then, for any $0<\delta<1$ and $s\ge 0$, we have
\begin{equation}\label{eq:CB-shifted}
\PP\left(\sum_{i=1}^n \xi_i>s+ (1+\delta)\sum_{i=1}^n\PP(\xi_i=1) \right)\le \exp(-c_\delta s),
\end{equation}
where $c_\delta:=\frac{3\delta}{2(3+\delta)}$.
\end{lemma}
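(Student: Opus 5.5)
The plan is to deduce \eqref{eq:CB-shifted} from the classical Bernstein (Chernoff-type) upper-tail inequality for sums of independent Bernoulli variables. After that, only an elementary algebraic comparison of exponents remains. Write $X:=\sum_{i=1}^n\xi_i$, $p_i:=\PP(\xi_i=1)$ and $\mu:=\sum_i p_i=\EE[X]$.

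\textbf{Step 1: the standard bound.} The first step is to establish, for every $u>0$,
\[
\PP(X-\mu\geq u)\leq\exp\left(-\frac{u^2}{2(\mu+u/3)}\right).
\]
I will prove this by the exponential Markov inequality. For $\theta\in(0,3)$ independence gives
\[
\EE\left[e^{\theta(X-\mu)}\right]=\prod_i e^{-\theta p_i}\bigl(1+p_i(e^\theta-1)\bigr)\leq\exp\bigl(\mu(e^\theta-1-\theta)\bigr),
\]
where the inequality uses $1+y\leq e^y$. Next I use $e^\theta-1-\theta=\sum_{k\geq2}\theta^k/k!\leq\frac{\theta^2}{2}\sum_{j\geq0}(\theta/3)^j=\frac{\theta^2}{2(1-\theta/3)}$, which holds since $k!\geq 2\cdot3^{k-2}$. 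The choice $\theta=u/(\mu+u/3)$ lies in $(0,3)$ and yields exactly the displayed bound. Alternatively, this is a textbook inequality and could simply be cited.

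\textbf{Step 2: choice of deviation.} Apply Step 1 with $u:=s+\delta\mu$. If $u=0$ then $s=0$ and the claimed bound is trivial, so assume $u>0$. The event in \eqref{eq:CB-shifted} is contained in $\{X-\mu\geq u\}$. It therefore suffices to check
\[
\frac{3u^2}{2(3\mu+u)}\geq c_\delta\, s .
\]

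\textbf{Step 3: comparison of exponents.} From $u=s+\delta\mu\geq\delta\mu$ we get $\mu\leq u/\delta$. Hence $3\mu+u\leq u(3+\delta)/\delta$, and so
\[
\frac{3u^2}{2(3\mu+u)}\geq\frac{3\delta}{2(3+\delta)}\,u=c_\delta u\geq c_\delta s .
\]
This completes the argument.

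The only delicate point is Step 1, specifically the bound on $e^\theta-1-\theta$ and the admissible range $\theta<3$. Everything else is routine, and the constant $c_\delta$ falls out naturally from Step 3. This suggests that the authors intended precisely this route.
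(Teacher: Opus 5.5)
Your proof is correct and follows essentially the same route as the paper. You apply the Bernstein bound $\exp\bigl(-u^2/(2(\mu+u/3))\bigr)$ with $u=s+\delta\mu$ and then compare exponents. Your MGF derivation of that bound replaces the paper's citation, and your use of $\mu\le u/\delta$ replaces the paper's clearing of denominators; it even gives the slightly stronger $\exp(-c_\delta(s+\delta\mu))$. The one point to tidy is the case $\mu=0$ in Step~1: there $\theta=3$ falls outside $(0,3)$, but then $X=0$ almost surely, so the bound holds trivially.
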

\begin{proof}
For independent zero-mean random variables $X_1,X_2,\cdots,X_n$ such that $|X_i|\le M$ almost surely for all $i$,
the classical Bernstein's inequality gives
\[
\PP\left(\sum_{i=1}^nX_i\ge u\right)
\le
\exp\left(
-\frac{\frac12u^2}{\sum_{i=1}^n \EE[X_i^2]+\frac13uM}
\right)
\]
for every \(u>0\).
See, for example,
\cite[Section~2.7, Eq.~(2.10)]{BoucheronLugosiMassart2013}.
Set 
\[
p_i:=\PP(\xi_i=1),\qquad \mu:=\sum_{i=1}^n p_i.
\]
Since \(\xi_i-p_i\le 1\) and
$
\sum_{i=1}^n \EE[(\xi_i-p_i)^2]
=
\sum_{i=1}^n p_i(1-p_i)
\le
%\sum_{i=1}^n p_i=
\mu,
$
by the Bernstein's inequality, for every \(u>0\),
\[
\PP\left(\sum_{i=1}^n(\xi_i-p_i)\ge u\right)
\le
\exp\left(
-\frac{u^2}{2(\mu+u/3)}
\right).
\]
Taking \(u=s+\delta\mu\), we obtain
\[
\PP\left(\sum_{i=1}^n\xi_i>s+(1+\delta)\mu\right)
\le
\exp\left(
-\frac{(s+\delta\mu)^2}
{2(\mu+(s+\delta\mu)/3)}
\right).
\]
It remains only to check
\[
\frac{(s+\delta\mu)^2}
{2(\mu+(s+\delta\mu)/3)}
\ge
\frac{3\delta}{2(3+\delta)}s,
\]
which is followed by clearing denominators using
\[
(3+\delta)(s+\delta\mu)^2
-
\delta s\bigl((3+\delta)\mu+s\bigr)
=
(3+\delta)\delta^2\mu^2
+
(3+\delta)\delta\mu s
+
3s^2
\ge 0.
\]
Thus the desired estimate follows.
%{\color{red} See [] for example.}
%We will give a proof of this lemma in the Appendix. 
\end{proof}

We write $\mathrm{hit}(i)$ for the event that the word $w$ coincides with $b_i b_{i+1}\ldots b_{i+\len(w)-1}$.
Define a random variable $I_i$ by
\[
I_i=
\mathbf{1}_{\mathrm{hit}(i)}=
\begin{cases}
1 & (\text{the word $w$ coincides with } b_i b_{i+1}\ldots b_{i+\len(w)-1}),\\
0 & (\text{otherwise}).
\end{cases}
\]
Since $M_w=\sum_{i=u}^{v-1} I_i$, the expectation value of $M_w$ is
\begin{equation}\label{eq:exp_M}
\mathbb{E}[M_w]
=
\sum_{i=u}^{v-1}\mathbb{E}[I_i]
=
\frac{n}{2^{\len(w)}}.
\end{equation}

\begin{prop}\label{prop:ev_M}
For $0<\delta<1$, there exists a real positive number $c_\delta>0$ such that, for any $f>0$,
\begin{equation}\label{eq:ev_M}
\PP\left(M_w>(1+\delta)\frac{n}{2^{\len(w)}}+f\right)<\ell(w)\exp\left(-c_\delta\frac{f}{\len(w)}\right).
\end{equation}
\end{prop}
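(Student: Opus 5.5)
The indicators $I_u,\dots,I_{v-1}$ are not independent, since $I_i$ and $I_j$ both depend on the digits $b_i,\dots,b_{i+\len(w)-1}$ and $b_j,\dots,b_{j+\len(w)-1}$, which overlap when $|i-j|<\len(w)$. They become independent after splitting the time indices into $\ell:=\len(w)$ residue classes modulo $\ell$. For $0\le s<\ell$, set
\[
M_w^{(s)}:=\sum_{\substack{u\le i<v\\ i\equiv s \ (\mathrm{mod}\ \ell)}} I_i .
\]
Within a fixed class, distinct indices differ by at least $\ell$, so the windows $b_i\cdots b_{i+\ell-1}$ are pairwise disjoint. Since the digits $b_i$ are i.i.d., the indicators in one class are therefore independent Bernoulli variables, each with success probability $2^{-\ell}$. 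Let $n_s$ be the number of indices in class $s$. Then $\sum_s n_s=n$, $M_w=\sum_s M_w^{(s)}$, and $\sum_s (1+\delta)n_s2^{-\ell}=(1+\delta)n2^{-\ell}$.

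If $M_w>(1+\delta)n2^{-\ell}+f$, then at least one class satisfies
\[
M_w^{(s)}>(1+\delta)\frac{n_s}{2^{\ell}}+\frac{f}{\ell}.
\]
Otherwise, summing the reverse inequalities over the $\ell$ classes would give $M_w\le(1+\delta)n2^{-\ell}+f$. For each class, apply Lemma~\ref{lem:CB-extract} with $s=f/\ell$ and mean $\sum\PP(\xi_i=1)=n_s2^{-\ell}$. This bounds the probability of the displayed event by $\exp(-c_\delta f/\ell)$, where $c_\delta=\frac{3\delta}{2(3+\delta)}$ is the constant of that lemma. A union bound over the $\ell$ classes then yields
\[
\PP\left(M_w>(1+\delta)\frac{n}{2^{\ell}}+f\right)\le \ell\exp\left(-c_\delta\frac{f}{\ell}\right),
\]
which is \eqref{eq:ev_M} with the same $c_\delta$, independent of $w$, $n$ and $f$.

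The only points needing care are routine. Empty residue classes contribute nothing and can be dropped. The stated strict inequality ``$<$'' can be handled in either of two ways: by slightly shrinking $c_\delta$, or by noting that the Bernstein bound in Lemma~\ref{lem:CB-extract} is strict for $f>0$. The main conceptual step is the residue-class decomposition, which turns the overlapping, dependent hit indicators into $\ell$ independent Bernoulli families. The cost of this decomposition is the factor $\ell$ in front and the loss $f\mapsto f/\ell$ in the exponent.
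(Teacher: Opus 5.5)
Your proposal is correct and follows the paper's own route. The paper likewise splits $M_w$ into $\len(w)$ residue classes of mutually independent hit indicators, applies Lemma~\ref{lem:CB-extract} to each class with $s=f/\len(w)$, and combines the bounds by a union bound. Your explicit pigeonhole step (some class must exceed $(1+\delta)n_s2^{-\len(w)}+f/\len(w)$) and your remark on strictness just make precise what the paper leaves implicit in ``summing up these inequalities.''
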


\begin{proof}
We decompose $M_w$ as
\[
\begin{aligned}
M_w&=(I_u+I_{u+\len(w)}+I_{u+2\len(w)}+\dots)+(I_{u+1}+I_{u+\len(w)+1}+I_{u+2\len(w)+1}+\dots)\\
&\quad +\cdots+(I_{u+\len(w)-1}+I_{u+2\len(w)-1}+\cdots),
\end{aligned}
\]
where each 
\[
I_{u+i}+I_{u+\len(w)+i}+I_{u+2\len(w)+i}+\dots=\sum_{k=0}^{\lfloor (n-i)/\len(w) \rfloor} I_{u+i+k\len(w)}
\]
consists of mutually independent random variables.
Let $c_\delta$ be the real positive number defined in Lemma \ref{lem:CB-extract}.
Then, by the concentration inequality \eqref{eq:CB-shifted}, we have
\[
\PP\left(
\sum_{k}I_{u+i+k\len(w)} 
>(1+\delta)\sum_k\EE[I_{u+i+k\len(w)}] +\frac{f}{\len(w)}\right)<\exp\left(-c_\delta\frac{f}{\len(w)}\right).
\]
By summing up these inequalities for $i=0,1,\dots,\len(w)-1$, we obtain the desired result.
%from Markov's inequality.
\end{proof}

Next, we consider $M_w-M_w'$, which is the number of elements of the difference set 
\[
\{a_{\tau_1},a_{\tau_2},\dots,a_{\tau_{M_w}}\}
\setminus
\{a_{\sigma_1},a_{\sigma_2},\dots,a_{\sigma_{M'_w}}\}
.
\]
% The thinning condition was that the difference between the indices of two terms be less than $D$.
% Accordingly, 
Define the random variable $V_i$, which means ``$a_i$ is contained in $\{a_{\tau_1},a_{\tau_2},\dots,a_{\tau_{M_w}}\}$, but has a possibility not to be contained in $\{a_{\sigma_1},a_{\sigma_2},\dots,a_{\sigma_{M'_w}}\}
$,'' by
\begin{equation}\label{eq:def_of_V}
V_i:=
\begin{cases}
1 & \mathrm{hit}(i)\wedge (\exists d\in \{1,2,\dots,D-1\}\ \ \text{s.t.}\ \ \mathrm{hit}(i+d)),\\
0 & (\text{otherwise}).
\end{cases}    
\end{equation}
Then, we have the inequality
\begin{equation}\label{eq:ev_M-M}
M_w-M_w'\leq \sum_{i=u}^{v-1}V_i.
\end{equation}

From \eqref{eq:def_of_V}, we have
\begin{equation}\label{eq:ev_of_V}
\EE[V_i]
\leq
\sum_{d=1}^{D-1}
\PP\bigl(\mathrm{hit}(i)\wedge \mathrm{hit}(i+d)\bigr).
\end{equation}
Let \(C\leq \len(w)\) denote the period of \(w\).
For \(1\leq d<C\), the two events $\mathrm{hit}(i)$ and $\mathrm{hit}(i+d)$ cannot occur simultaneously:
\[
\PP\bigl(\mathrm{hit}(i)\wedge \mathrm{hit}(i+d)\bigr)=0.
\]
For \(C\leq d<\len(w)\), if the two events $\mathrm{hit}(i)$ and $\mathrm{hit}(i+d)$ occur simultaneously, they prescribe the digits on the union
\[
\{i,i+1,\dots,i+\len(w)-1\}
\cup
\{i+d,i+d+1,\dots,i+d+\len(w)-1\},
\]
which contains \(\len(w)+d\) numbers.
Therefore, we have
\[
\PP\bigl(\mathrm{hit}(i)\wedge \mathrm{hit}(i+d)\bigr)
\leq
2^{-(\len(w)+d)}.
\]
For \(\ell(w)\leq d<D\), the two events involve disjoint sets of \(\ell(w)\) digits, and hence
\[
\PP\bigl(\mathrm{hit}(i)\wedge \mathrm{hit}(i+d)\bigr)
=
2^{-2\len(w)}.
\]

Consequently, from \eqref{eq:ev_of_V}, we have
\[
\begin{aligned}
\EE[V_i]
&\leq
\sum_{d=C}^{\len(w)-1}2^{-(\len(w)+d)}
+
\sum_{d=\len(w)}^{D-1}2^{-2\len(w)}<
\frac{2}{2^{\len(w)+C}}
+
\frac{D-\len(w)}{4^{\len(w)}}\\
&=
\frac{2}{2^{\len(w)+C}}
+
\frac{t}{4^{\len(w)}}
\leq
\frac{t+2}{2^{\len(w)+C}}.
\end{aligned}
\]
The first sum is understood to be zero when \(C=\len(w)\).

It now follows from \eqref{eq:ev_M-M} that
\begin{equation}\label{eq:upper_MM}
\EE[M_w-M_w']
\leq
\sum_{i=u}^{v-1}\EE[V_i]
\leq
\frac{(t+2)n}{2^{\len(w)+C}}.
\end{equation}
\color{black}

\begin{lemma}\label{lemma:genMM}
For $0<\delta<1$, there exists a real positive number $c_\delta>0$ such that, for any $g>0$,
\[
\mathbb{P}\left(M_w-M_w'>(1+\delta)\frac{(t+2) n}{2^{\len(w)+C}}+g
\right)<
(D+\len(w))
\exp
\left(-c_\delta\frac{g}{D+\len(w)}\right).
\]
\end{lemma}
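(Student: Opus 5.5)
We mirror the proof of Proposition~\ref{prop:ev_M}, replacing the indicators $I_i$ by the indicators $V_i$ from \eqref{eq:def_of_V}. By \eqref{eq:ev_M-M} it suffices to bound the upper tail of $\sum_{i=u}^{v-1}V_i$. The first step is to note that each $V_i$ is a function only of the digits $b_i,b_{i+1},\dots,b_{i+D-1+\len(w)-1}$. These are the digits that determine $\mathrm{hit}(i)$ and all of $\mathrm{hit}(i+d)$ for $1\le d\le D-1$, so $V_i$ depends on a window of $D+\len(w)-1<D+\len(w)$ consecutive digits. Hence, if two indices differ by at least $D+\len(w)$, the corresponding $V$'s are functions of disjoint sets of independent Bernoulli digits and are therefore independent.

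Next, set $h:=D+\len(w)$ and split the sum into $h$ residue classes modulo $h$:
\[
\sum_{i=u}^{v-1}V_i=\sum_{j=0}^{h-1}\ \sum_{k\ge0,\ u+j+kh<v} V_{u+j+kh}.
\]
Within each class the summands are independent Bernoulli variables. We apply Lemma~\ref{lem:CB-extract} to each class with $s=g/h$. This gives
\[
\PP\Bigl(\sum_k V_{u+j+kh}>(1+\delta)\sum_k\EE[V_{u+j+kh}]+\tfrac{g}{h}\Bigr)\le \exp(-c_\delta g/h).
\]
If for every $j$ the $j$-th class sum is at most $(1+\delta)\sum_k\EE[V_{u+j+kh}]+g/h$, then summing over $j$ gives
\[
\sum_i V_i\le(1+\delta)\sum_{i=u}^{v-1}\EE[V_i]+g\le (1+\delta)\frac{(t+2)n}{2^{\len(w)+C}}+g,
\]
where the last inequality is \eqref{eq:upper_MM}. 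A union bound over the $h$ classes therefore yields the stated probability bound $(D+\len(w))\exp(-c_\delta g/(D+\len(w)))$, with the same $c_\delta$ as in Lemma~\ref{lem:CB-extract}. By \eqref{eq:ev_M-M}, the event $M_w-M_w'>\cdots$ is contained in the corresponding event for $\sum V_i$, and this completes the argument.

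The only point that needs care is the dependence-range bookkeeping, namely checking that the window length is at most $D+\len(w)-1$, so that spacing $h=D+\len(w)$ guarantees genuine independence. One should also check that the sum of the expectations within each class is used before inserting the bound \eqref{eq:upper_MM}. Monotonicity handles this, since replacing $\sum_i\EE[V_i]$ by a larger quantity only enlarges the event in question. A minor technicality is that the indices $i+d$ may exceed $v-1$; this is harmless because the digits $b_j$ are defined for all $j$, and those terms are still covered by the window count.
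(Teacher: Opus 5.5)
Your proposal is correct and follows the paper's proof. Like the paper, you split $\sum_{i=u}^{v-1}V_i$ into residue classes modulo $D+\len(w)$, apply Lemma~\ref{lem:CB-extract} to each class with $s=g/(D+\len(w))$ exactly as in Proposition~\ref{prop:ev_M}, take a union bound, and conclude via \eqref{eq:ev_M-M} and \eqref{eq:upper_MM}. Your check that each $V_i$ depends only on the digits $b_i,\dots,b_{i+D+\len(w)-2}$ makes explicit the independence within each class, which the paper asserts without justification.
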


\begin{proof}
The random variable $\sum_{i=u}^{v-1}V_i$ can be decomposed as
\[
\sum_{i=u}^{v-1}V_i
=
(V_u+V_{u+D+\len(w)}+V_{u+2(D+\len(w))}+\dots)+
(V_{u+1}+V_{u+D+\len(w)+1}+V_{u+2(D+\len(w))+1}+\dots)+\cdots,
\]
where 
each 
\[
V_{u+i}+V_{u+(D+\len(w))+i}+V_{u+2(D+\len(w))+i}+\dots
=\sum_{k=0}^{\lfloor (n-i)/(D+\len(w)) \rfloor} V_{u+i+k(D+\len(w))}
\]
consists of mutually independent random variables.
A similar argument as in the proof of Proposition \ref{prop:ev_M} leads to the desired inequality.
\end{proof}

The following two lemmas immediately follow from Lemma \ref{lemma:genMM}.

\begin{lemma}\label{lemma:MM}
If $w\in \mathcal{W}_{\mathrm{regular}}$, we have
\[
\mathbb{P}\left(M_w-M_w'>(1+\delta)\frac{(t+2) n}{4^{P}}+g
\right)<
(2Q+t)
\exp
\left(-c_\delta\frac{g}{2Q+t}\right).
\]
\end{lemma}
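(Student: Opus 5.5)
The plan is to derive Lemma~\ref{lemma:MM} directly from Lemma~\ref{lemma:genMM}, which holds for every \(w\in\mathcal W\). We only have to check two things for a regular word: that its threshold \(\frac{(t+2)n}{2^{\len(w)+C}}\) is at most \(\frac{(t+2)n}{4^P}\), and that its prefactor and exponent in \(D+\len(w)\) are dominated by those with \(2Q+t\). Once both comparisons hold, the event in Lemma~\ref{lemma:MM} is contained in the event of Lemma~\ref{lemma:genMM}, and the latter's right-hand side is bounded by the claimed one.

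\emph{The first comparison.} By Definition~\ref{defi:W}, a regular word has \(\len(w)\geq P\) and no short period. Hence its minimal period \(C=\chi(w)\) satisfies \(C\geq P\), and \(2^{\len(w)+C}\geq 2^{2P}=4^P\). Therefore
\[
(1+\delta)\frac{(t+2)n}{2^{\len(w)+C}}+g\leq(1+\delta)\frac{(t+2)n}{4^{P}}+g .
\]
So the event \(\{M_w-M_w'>(1+\delta)\frac{(t+2)n}{4^P}+g\}\) is contained in the event bounded in Lemma~\ref{lemma:genMM}.

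\emph{The second comparison.} We use \(D=\len(w)+t\), so \(D+\len(w)=2\len(w)+t\leq 2Q+t\) because \(\len(w)\leq Q\). The function \(K\mapsto K\exp(-c_\delta g/K)\) is increasing in \(K>0\), since both factors increase. Hence
\[
(D+\len(w))\exp\!\left(-c_\delta\frac{g}{D+\len(w)}\right)\leq(2Q+t)\exp\!\left(-c_\delta\frac{g}{2Q+t}\right).
\]

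Combining the inclusion of events with this monotone bound gives the claim, with the same \(c_\delta\) as in Lemma~\ref{lemma:genMM}. There is no real obstacle. The only point needing care is that the period \(C\) of a regular word is at least \(P\), which is exactly the "no short period" condition in (i) and (ii). Exceptional words fail this condition, which is why they need a separate lemma.
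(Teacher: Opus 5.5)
Your proposal is correct and follows the paper's own argument. Both deduce the lemma from Lemma~\ref{lemma:genMM} via $P\le C\le\len(w)$, which gives $2^{\len(w)+C}\ge 4^P$, together with $D+\len(w)=2\len(w)+t\le 2Q+t$; you additionally spell out the event inclusion and the monotonicity of $K\mapsto K\exp(-c_\delta g/K)$, both of which the paper leaves implicit.
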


\begin{proof}
Since $D=\len(w)+t$, we have $D+\len(w)=2\len(w)+t
\leq 2Q+t$.
If $w\in \mathcal{W}_{\mathrm{regular}}$, it follows that $P\leq C\leq \len(w)$, which implies the claim.
\end{proof}

\begin{lemma}\label{lemma:MM_ex}
If $w\in \mathcal{W}_{\mathrm{exceptional}}$, we have
\[
\mathbb{P}\left(M_w-M_w'>(1+\delta)\frac{(t+2) n}{2^{Q+1}}+g
\right)<
(2Q+t)
\exp
\left(-c_\delta\frac{g}{2Q+t}\right).
\]
\end{lemma}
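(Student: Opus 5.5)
This is a direct specialization of Lemma~\ref{lemma:genMM}, in the same way that Lemma~\ref{lemma:MM} was. The plan is to bound the two quantities that depend on $w$: the scale $D+\len(w)$ appearing on the right-hand side, and the mean term $\frac{(t+2)n}{2^{\len(w)+C}}$ appearing inside the probability.

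For the scale, we use that every exceptional word has length exactly $Q$ by Definition~\ref{defi:W}(iii). Since $D=\len(w)+t$, this gives
\[
D+\len(w)=2Q+t .
\]
The right-hand side of Lemma~\ref{lemma:genMM} then becomes precisely $(2Q+t)\exp\bigl(-c_\delta g/(2Q+t)\bigr)$.

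For the mean term, we need $2^{\len(w)+C}\ge 2^{Q+1}$, i.e.\ $C\geq 1$, where $C$ is the period of $w$. This holds trivially, because the period of any nonempty word is at least $1$. (An exceptional word has a short period, so $C<P$; that fact is not needed here, only the lower bound.) Hence
\[
\frac{(t+2)n}{2^{\len(w)+C}}\le\frac{(t+2)n}{2^{Q+1}} .
\]
It follows that the event $\{M_w-M_w'>(1+\delta)\frac{(t+2)n}{2^{Q+1}}+g\}$ is contained in the event $\{M_w-M_w'>(1+\delta)\frac{(t+2)n}{2^{\len(w)+C}}+g\}$. Applying Lemma~\ref{lemma:genMM} to the larger event with the same $g$ gives the claim.

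The argument has no real obstacle. The one point to check is the direction of the inequalities: replacing the threshold by a larger one makes the event smaller, so its probability can only decrease.
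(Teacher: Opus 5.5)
Your proposal is correct and is the paper's argument: the paper also specializes Lemma~\ref{lemma:genMM} using $\len(w)=Q$, which gives $D+\len(w)=2Q+t$ and, since $C\geq 1$, $2^{\len(w)+C}\geq 2^{Q+1}$. You have only spelled out the step the paper leaves implicit, namely that raising the threshold shrinks the event.
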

\begin{proof}
If $w\in \mathcal{W}_{\mathrm{exceptional}}$, we have $\len(w)=Q$, which implies the claim.
\end{proof}

% \subsection{$\mathcal{W}_{\mathrm{regular}}$ vs.\ $\mathcal{W}_{\mathrm{exceptional}}$}

% The basic idea of proofs in this paper is to find a subsequence distributed like a uniform random permutation.
% %by ``thinning out'' part of the sequence $I_w\cap\{a_i:i\in J\}$.
% For $w\in \mathcal{W}_{\mathrm{regular}}$, in order for the number of points removed in this thinning procedure (which is $\sim O(n/4^P)$; see Lemma~\ref{lemma:MM}) to be small, it suffices to take the period length of $w$ (that is, the size of $P$) large.
% On the other hand, for $w\in \mathcal{W}_{\mathrm{exceptional}}$, we ensure the smallness of the number of removed points (which is $\sim O(n/2^{Q+1})$; see Lemma~\ref{lemma:MM_ex}) by taking $Q$ large.

In what follows, we assume that
\begin{equation}\label{eq:Q_vs_P}
Q\geq 2P.
\end{equation}
Under this assumption, Lemma~\ref{lemma:MM} and Lemma~\ref{lemma:MM_ex} can be combined as follows:
\begin{prop}\label{prop:M-M'}
Under \eqref{eq:Q_vs_P}, we have
\[
\mathbb{P}\left(M_w-M_w'>(1+\delta)\frac{(t+2) n}{4^{P}}+g
\right)<
(2Q+t)
\exp
\left(-c_\delta\frac{g}{2Q+t}\right)
\]
 for any $w\in \mathcal{W}$.
\end{prop}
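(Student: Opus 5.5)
The plan is to split according to whether $w$ is regular or exceptional, and in each case reduce to the two preceding lemmas. The only thing to check is that the centering term $(1+\delta)\frac{(t+2)n}{4^P}$ dominates the centering term appearing in each lemma. The right-hand sides, $(2Q+t)\exp\bigl(-c_\delta g/(2Q+t)\bigr)$, already agree.

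\emph{Regular case.} If $w\in\mathcal W_{\mathrm{regular}}(P,Q)$, the statement is exactly Lemma~\ref{lemma:MM}, so nothing more is needed.

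\emph{Exceptional case.} If $w\in\mathcal W_{\mathrm{exceptional}}(P,Q)$, Lemma~\ref{lemma:MM_ex} gives the bound with centering $(1+\delta)\frac{(t+2)n}{2^{Q+1}}$. By \eqref{eq:Q_vs_P} we have $Q+1>2P$, hence $2^{Q+1}\geq 4^P$ and
\[
(1+\delta)\frac{(t+2)n}{2^{Q+1}}\leq (1+\delta)\frac{(t+2)n}{4^{P}}.
\]
The event $\{M_w-M_w'>(1+\delta)\frac{(t+2)n}{4^P}+g\}$ is therefore contained in the event $\{M_w-M_w'>(1+\delta)\frac{(t+2)n}{2^{Q+1}}+g\}$. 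Monotonicity of probability then gives the claimed bound, with the same constant $c_\delta$ from Lemma~\ref{lem:CB-extract}.

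I expect no real obstacle here. The one point worth re-checking is the exponent in Lemma~\ref{lemma:MM}: for regular $w$ the period satisfies $C\geq P$ and $\len(w)\geq P$, so $2^{\len(w)+C}\geq 4^P$. This is the same comparison as in the exceptional case, and it is where the assumption $Q\geq 2P$ puts the exceptional words on the same scale as the regular ones.
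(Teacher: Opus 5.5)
Your proposal is correct and takes the same route as the paper, which obtains this proposition by combining Lemma~\ref{lemma:MM} for regular words with Lemma~\ref{lemma:MM_ex} for exceptional words. You have spelled out the step the paper leaves implicit: $Q\geq 2P$ gives $2^{Q+1}\geq 4^P$, so the event with the larger centering term is contained in the one controlled by Lemma~\ref{lemma:MM_ex}.
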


\subsection{Estimate of $\lambda_{\succ,1}^{(N)}$}

In this final subsection, we derive an upper bound for $\EE[\lambda_{\succ,1}^{(N)}]$ by applying Proposition~\ref{prop:decomp}.
For this, we sum all the concentration inequalities obtained in the previous subsections over all pairs consisting of $w\in\mathcal{W}$ and an interval $J=[u,v)$.
In what follows, we rewrite $W_w,L_w,M_w,M_w'$ in the previous subsection as $W_w(J),L_w(J),M_w(J),M_w'(J)$ to clarify the dependency of the choice of $J$.

\begin{prop}\label{prop:ideal}
Let $f,g$ be arbitrary positive parameters.
Assume \eqref{eq:Q_vs_P}.
For sufficiently large $N$, define the function $\mathcal{H}$ by
\[
\mathcal{H}:=RN^2\left(
\binom{N}{2}2^{-t}+
\exp(-c(\epsilon)N^{1/20})
%2^{-t}N^3+
+Q\exp\left(-c_\delta\frac{f}{Q}\right)
+(2Q+t)
\exp
\left(-c_\delta\frac{g}{2Q+t}\right)
\right).
\]
Then, the following events~(i)--(iii) occur simultaneously with probability at least $1-\mathcal{H}$.
\begin{enumerate}
\item[(i)] For every $w\in \mathcal{W}$ and every interval $J\subset [1,N]$,
\[
W_w(J)\leq (2+\epsilon)\sqrt{M'_w(J)}+N^{1/10}
\]
holds.
\item[(ii)] For every $w\in \mathcal{W}$ and every interval $J\subset [1,N]$,
\[
M_w(J)\leq (1+\delta)\frac{\# J}{2^{\len(w)}}+f
\]
holds.
\item[(iii)] For every $w\in \mathcal{W}$ and every interval $J\subset [1,N]$,
\[
M_w(J)-M_w'(J)\leq (1+\delta)\frac{(t+2)\# J}{4^P}+g
\]
holds.
\end{enumerate}
\end{prop}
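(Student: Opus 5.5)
This is a union bound over the complementary events. There are at most $R$ words $w\in\mathcal W$ and at most $N^2$ intervals $J=\{u,\dots,v-1\}\subset[1,N]$, since such an interval is determined by the pair $(u,v)$. So there are at most $RN^2$ pairs $(w,J)$. For each fixed pair I bound the probability that (i), (ii) or (iii) fails by the bracketed quantity in $\mathcal H$. Summing over all pairs then shows that the probability that some event fails is at most $\mathcal H$.

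\emph{Event (i).} For fixed $(w,J)$, \eqref{eq:ev_L_2} gives
\[
\PP\bigl(W_w(J)>(2+\epsilon)\sqrt{M_w'(J)}+N^{1/10}\bigr)\leq\binom N2 2^{-t}+\exp(-c(\epsilon)N^{1/20}).
\]
This holds for all $N$ large enough that $N^{1/10}\geq k_0(\epsilon)$, and this threshold does not depend on $w$ or $J$. That is why the statement says ``for sufficiently large $N$.''

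I will check that \eqref{eq:ev_L_2} follows from \eqref{eq:ev_W_2} uniformly. Split according to whether $M_w'<N^{1/10}$ or $M_w'\geq N^{1/10}$. In the first case the event cannot occur, because $W_w\leq M_w'<N^{1/10}$. In the second case we use the conditional bound \eqref{eq:ev_W_2}. Averaging over the conditioning gives the unconditional bound.

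\emph{Event (ii).} This comes directly from Proposition~\ref{prop:ev_M} with $n=\#J$. The failure probability is at most $\len(w)\exp(-c_\delta f/\len(w))$. Since $\len(w)\leq Q$ and $x\mapsto x e^{-a/x}$ is increasing in $x>0$, this is at most $Q\exp(-c_\delta f/Q)$, uniformly in $w$.

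\emph{Event (iii).} This comes from Proposition~\ref{prop:M-M'}, which uses \eqref{eq:Q_vs_P}. Its failure probability is at most $(2Q+t)\exp(-c_\delta g/(2Q+t))$ for every $w$.

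Adding the three per-pair bounds and multiplying by the number of pairs, at most $RN^2$, gives exactly $\mathcal H$.

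\emph{Main obstacle.} The delicate point is uniformity. The constants $c(\epsilon)$, $k_0(\epsilon)$ and $c_\delta$ must not depend on $w$ or $J$, and the per-pair bounds must hold simultaneously for a single large $N$. The first two constants come from Theorem~\ref{thm:known}, and $c_\delta$ comes from Lemma~\ref{lem:CB-extract}, so none of them depends on $(w,J)$.

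There is also a minor point about the thinning length. Here $D$ depends on $w$ through $D=\len(w)+t$, while $t$ is fixed. The bounds in Lemma~\ref{lemma:MM} and Lemma~\ref{lemma:MM_ex} were already stated in terms of $t$ and $Q$ alone, so this dependence causes no trouble.

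Finally, degenerate intervals need a check. If $J$ is very short or $M_w=0$, every inequality in (i)--(iii) holds trivially, and the bounds above apply verbatim anyway.
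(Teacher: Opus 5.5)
Your proposal is correct and is the paper's argument: a union bound over the at most $RN^2$ pairs $(w,J)$, with the per-pair failure probabilities taken from \eqref{eq:ev_L_2}, Proposition~\ref{prop:ev_M} and Proposition~\ref{prop:M-M'}. Your monotonicity check for $x\mapsto xe^{-a/x}$, used to replace $\len(w)$ by $Q$ in (ii), makes explicit a step the paper leaves implicit. One side remark is slightly off: (ii) does not hold trivially for very short $J$ when $f<1$, but as you note, the general bounds cover that case anyway.
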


\begin{proof}
The estimates for (i)--(iii) come respectively from \eqref{eq:ev_L_2}, Proposition~\ref{prop:ev_M}, and Proposition~\ref{prop:M-M'}.
The function $\mathcal{H}$ is obtained by summing the corresponding right-hand sides. 
Note that the total number of pairs
\[
(w,J)\in \mathcal{W}\times \{J=[u,v):1\leq u<v\leq N+1\}
\]
is at most $RN^2$.
\end{proof}

% We will denote by
% \[
% \mathrm{Ideal}
% \]
% the event on which all of (i)--(iii) in Proposition~\ref{prop:ideal} occur.

Let
\[
(1=t^\ast_0\leq t^\ast_1\leq \dots\leq t^\ast_{R}=N+1)\in \Delta_N^{(R)}
\]
be a partition that gives the maximum on the right-hand side of Proposition~\ref{prop:decomp}.
Arrange the elements of $\mathcal{W}=\mathcal{W}(P,Q)$ as
\[
w_0,w_1,\dots,w_{R-1}
\]
such that $C_{w_0}\prec C_{w_1}\prec \dots \prec C_{w_{R-1}}$.
%Set $I_r:=C_{w_r}$.

For each $w=w_r\in \mathcal{W}$, let $J_{w}^\ast=[t^\ast_r,t^\ast_{r+1})$ be the $r$-th interval.
Below, we abbreviate
\[
L^\ast_w:=L_w(J_w^\ast),\qquad
W^\ast_w:=W_w(J_w^\ast),\qquad
M^\ast_w:=M_w(J_w^\ast),\qquad
{M'_w}^\ast:=M'_w(J_w^\ast).
\]
Then, we have the identity
\begin{equation}\label{eq:decomp_lambda_1}
\lambda_{\succ,1}^{(N)}=\sum_{w\in \mathcal{W}}L^\ast_w
\end{equation}

%\subsubsection{Estimate of $L_w$}

Under the event on which all of (i)--(iii) in Proposition~\ref{prop:ideal} occur, we have
\begin{align*}
\lambda_{\succ,1}^{(N)}
&=\sum_{w\in \mathcal{W}}L^\ast_w\\
&\leq 
\sum_{w\in \mathcal{W}}(W^\ast_w+M^\ast_w-{M'_w}^\ast)\qquad (\text{Lemma \ref{lemma:combin_W_L}})\\
&\leq \sum_{w\in \mathcal{W}}\left((2+\epsilon)\sqrt{{M_w'}^\ast}+N^{1/10}+(1+\delta)\frac{(t+2)\# J^\ast_w}{4^P}+g\right)
\qquad (\text{Prop.~\ref{prop:ideal} (i),(iii)})
\\
% &\leq \sum_{w\in \mathcal{W}}\left((2+\epsilon)\sqrt{{M_w'}^\ast}+N^{1/10}+(1+\delta)\frac{(t+2)\# J^\ast_w}{4^P}+g\right)\\
&=(2+\epsilon)\sum_{w\in \mathcal{W}} \sqrt{{M_w'}^\ast}+(1+\delta)\frac{(t+2)N}{4^P}+R(g+N^{1/10}).
\end{align*}

Here, by the Cauchy--Schwarz inequality, we obtain
\begin{align*}
\sum_{w\in \mathcal{W}}\sqrt{{M_w'}^\ast} 
&\leq
\sqrt{
\left(\sum_{w\in \mathcal{W}} \frac{1}{2^{\len(w)}}\right)
\left(\sum_{w\in \mathcal{W}} 2^{\ell(w)}{M_w'}^\ast\right)
}\\
&=
\sqrt{
\sum_{w\in \mathcal{W}} 2^{\len(w)}{M_w'}^\ast}
\qquad (\text{Lemma \ref{lemma:2_minusL}})
\\
&\leq 
\sqrt{
\sum_{w\in \mathcal{W}} 2^{\len(w)}M^\ast_w}\\
&\leq 
\sqrt{(1+\delta)N+(\sum_{w\in \mathcal{W}} 2^{\len(w)})f}
\qquad(\text{Prop.~\ref{prop:ideal} (ii)})
\\
&\leq \sqrt{(1+\delta)N+2^{P+Q+3}f}
\qquad (\text{Lemma \ref{lemma:sum_of_powers_of_2}}).
\end{align*}
%\marginpar{\tiny WとWregの違い問題 $\to$ Lemma \ref{lemma:sum_of_powers_of_2}を変更, 合わせてここで二箇所$+1$を$+3$に変更しました. 次ページの$R<2^{P+1}(Q-P+1)<N^{1/3}$も}
From these two inequalities, it follows that
\[
\lambda_{\succ,1}^{(N)}\leq \mathcal{F},
\]
where 
\begin{equation}\label{eq:regularW}
\mathcal{F}:=
(2+\epsilon)\sqrt{(1+\delta)N+2^{P+Q+3}f}+(1+\delta)\frac{(t+2)N}{4^P}+R(g+N^{1/10}).
\end{equation}

% From this point, define the right-hand side of \eqref{eq:regularW} by
% \[
% \begin{aligned}
% \mathcal{F}&:=
% (2+\epsilon)\sqrt{(1+\delta)N+2^{P+Q+1}f}+(1+\delta)\frac{(t+2)N}{4^P}+R(g+N^{1/10}).
% \end{aligned}
% \]
% Then, under the event $\mathrm{Ideal}$, we see that
% \[
% \lambda_{\succ,1}^{(N)}\leq \mathcal{F}
% \]
% holds.

In other words, we obtain the following proposition.

\begin{prop}\label{prop:final_ev}
Under the inequality \eqref{eq:Q_vs_P}, we have
\[
\PP\left(\lambda_{\succ,1}^{(N)}>\mathcal{F}\right)<\mathcal{H}.
\]
\end{prop}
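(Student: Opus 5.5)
The inequality is essentially a repackaging of what has just been derived, so I would prove it by containment of events. Let $G$ be the event on which (i)--(iii) of Proposition~\ref{prop:ideal} hold simultaneously. Proposition~\ref{prop:ideal} gives $\PP(G^c)\le\mathcal H$; if any single bound there is only non-strict, I would absorb this by noting that the right-hand sides of \eqref{eq:ev_L_2}, Proposition~\ref{prop:ev_M} and Proposition~\ref{prop:M-M'} are strict. The plan is then to show $\{\lambda_{\succ,1}^{(N)}>\mathcal F\}\subset G^c$, i.e.\ $\lambda_{\succ,1}^{(N)}\le\mathcal F$ pointwise on $G$.

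To do this, fix $x\in G$ and use Proposition~\ref{prop:decomp} with the complete prefix set $\mathcal W(P,Q)$ (Lemma~\ref{lemma:division}). This gives a maximizing partition $(t_r^\ast)$, which depends on $x$, and the identity \eqref{eq:decomp_lambda_1}.

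The key point is that the bounds in $G$ are uniform over all pairs $(w,J)$ with $J\subset[1,N]$. They therefore apply to the random intervals $J_w^\ast$, including empty ones, where all the quantities vanish. This uniformity is precisely why the union bound over the $RN^2$ pairs was built into $\mathcal H$.

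Then, for each $w$, I would bound the terms as follows.
\begin{itemize}
\item Lemma~\ref{lemma:combin_W_L} gives $L_w^\ast\le W_w^\ast+M_w^\ast-{M_w'}^\ast$.
\item Part (i) bounds $W_w^\ast$ by $(2+\epsilon)\sqrt{{M'_w}^\ast}+N^{1/10}$.
\item Part (iii), which relies on \eqref{eq:Q_vs_P} through Proposition~\ref{prop:M-M'}, bounds $M_w^\ast-{M_w'}^\ast$ by $(1+\delta)(t+2)\#J_w^\ast/4^P+g$.
\end{itemize}
Summing over $w$ and using $\sum_w\#J_w^\ast=N$ produces the last two terms of $\mathcal F$.

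For the square-root sum, I would apply Cauchy--Schwarz with weights $2^{-\len(w)}$ and use Lemma~\ref{lemma:2_minusL} to remove the first factor. Next I would use ${M_w'}^\ast\le M_w^\ast$ from \eqref{eq:L_vs_W}, and then part (ii), which gives $\sum_w 2^{\len(w)}M_w^\ast\le(1+\delta)N+f\sum_w2^{\len(w)}$. Finally, Lemma~\ref{lemma:sum_of_powers_of_2} bounds the remaining sum by $2^{P+Q+3}$. Altogether this gives $\lambda_{\succ,1}^{(N)}\le\mathcal F$ on $G$.

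There is no real obstacle here. The only point needing care is that the optimal partition is data-dependent, which is why events (i)--(iii) must be uniform in $J$ rather than stated for a fixed interval. I would also check that the conditioning step \eqref{eq:ev_L_2} is unconditional in its final form, so that it combines with the other bounds in the union bound.
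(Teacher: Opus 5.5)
Your proposal is correct and is essentially the paper's own argument. On the event of Proposition~\ref{prop:ideal} you take the maximizing partition from Proposition~\ref{prop:decomp}, bound each $L^\ast_w$ via Lemma~\ref{lemma:combin_W_L} together with (i) and (iii), and control $\sum_w\sqrt{{M'_w}^\ast}$ by Cauchy--Schwarz with weights $2^{-\len(w)}$, Lemma~\ref{lemma:2_minusL}, (ii) and Lemma~\ref{lemma:sum_of_powers_of_2}, exactly as in the text. Your remark about empty intervals $J^\ast_w$ is a useful detail the paper leaves implicit. One small slip: \eqref{eq:ev_L_2} is only a non-strict bound, but the strict inequalities in Propositions~\ref{prop:ev_M} and~\ref{prop:M-M'} still make the union bound, and hence $\PP(\lambda_{\succ,1}^{(N)}>\mathcal F)<\mathcal H$, strict.
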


% \subsubsection{Estimate of $\EE[\lambda_{\succ,1}^{(N)}]$}

Finally, we complete the estimation of $\EE[\lambda_{\succ,1}^{(N)}]$.
For a sufficiently small constant $\theta>0$,
we choose the parameters as
\[
\begin{gathered}
\delta=\frac{\epsilon}{10},\quad P\sim \left(\frac{1}{4}+\theta\right)\log_2N,\quad
Q\sim \left(\frac{3}{4}-2\theta\right)\log_2N,\\
 t\sim 10\log_2N,\quad 
f\sim N^{\theta/2},\quad
 g\sim N^{1/10}
\end{gathered}
\]
as $N\to \infty$.
Then the inequality \eqref{eq:Q_vs_P} is satisfied, so Proposition~\ref{prop:final_ev} applies.

By Proposition \ref{prop:ev_R}, we have
\[
R<2^{P+1}(Q-P+1)<N^{1/3}\qquad (N\to \infty).
\]
From this, we see that each term appearing in $\mathcal{F}$ is of order at most $N^{1/2}$, and that its leading term is
\[
\mathcal{F}\sim (2+\epsilon)\sqrt{(1+\epsilon/10)N}\qquad (N\to \infty).
\]
On the other hand, since the leading term of $\mathcal{H}$ is 
\[
RN^2\binom{N}{2}2^{-t}, 
\]
it follows that
\[
\mathcal{H}=o(N^{-1}).
\]

As $\lambda_{\succ,1}^{(N)}\leq N$, Proposition \ref{prop:final_ev} implies that
\[
\EE[\lambda_{\succ,1}^{(N)}]\leq (2+\epsilon)\sqrt{(1+\epsilon/10)N}+N\cdot o(N^{-1}).
\]
Therefore,
\[
\EE[\lambda_{\succ,1}^{(N)}]\leq \left(2+\frac{6}{5}\epsilon\right)\sqrt{N}\qquad (N\to \infty)
\]
follows.
Finally, by replacing $\frac{6}{5}\epsilon$ with $\epsilon$, we obtain the desired inequality.

\appendix

\section*{Acknowledgements}
S.~Iwao was supported
by JSPS KAKENHI Grant Numbers 22K03239 and 23K03056.
F.~Nakamura was supported
by JSPS KAKENHI Grant Numbers 23K25785 and 24K16942.
Y.~Nakano was supported by JSPS KAKENHI Grant  Number 23K03188 and JST PRESTO Grant Number JPMJPR25K8.

\section*{Declaration of Generative AI Usage}

In this paper, the authors utilized AI to generate the main ideas for
our proofs, refine the language, and check for errors. Although the
AI-generated proofs were not flawless, they provided the authors with
the inspiration and courage to revisit a problem that had been left
unsolved for a long time. This paper is the result of human
mathematicians creatively reconstructing and expanding upon the ideas
initially provided by the AI. The authors bear full responsibility for
the accuracy and integrity of the content.

\section*{Data availability statement}

No new data was generated or analyzed during this study.

\section*{Declaration of interest}

The authors declare no conflicts of interest associated with this manuscript.

\end{document}

%% file: fig_partition.tex
%% --- Figure: Proposition partition (horizontal layout, ellipsis suggesting many R) ---
\begin{figure}[htbp]
\centering
\begin{tikzpicture}[>=Stealth, scale=0.5, every node/.style={font=\small}]
  %% Layout (TikZ units): 7 columns and 7 rows including a wider middle ellipsis
  %% Column boundaries (x): 0, 3, 6, 9, 13, 16, 19, 22  (cols 0..2 left, ellipsis, cols R-3..R-1 right)
  %% Row boundaries (y):    0, 1, 2, 3, 5,  6,  7,  8

  %% ===== Diagonal blocks fill =====
  \fill[black!10] (0, 0)  rectangle (3, 1);    % r=0
  \fill[black!10] (3, 1)  rectangle (6, 2);    % r=1
  \fill[black!10] (6, 2)  rectangle (9, 3);    % r=2
  \fill[black!10] (13, 5) rectangle (16, 6);   % r=R-3
  \fill[black!10] (16, 6) rectangle (19, 7);   % r=R-2
  \fill[black!10] (19, 7) rectangle (22, 8);   % r=R-1

  %% ===== Grid lines (visible part) =====
  \foreach \x in {0, 3, 6, 9, 13, 16, 19, 22} {
    \draw[thin, black!30] (\x, 0) -- (\x, 8);
  }
  \foreach \y in {0, 1, 2, 3, 5, 6, 7, 8} {
    \draw[thin, black!30] (0, \y) -- (22, \y);
  }
  \draw[very thick] (0, 0) rectangle (22, 8);

  %% ===== Diagonal continuation indicator (in the central ellipsis area) =====
  \node[font=\Huge, fill=white, inner sep=2pt] at (11, 4) {$\ddots$};

  %% ===== Axes =====
  \draw[->, thick] (0, 0) -- (23.0, 0) node[right] {time $i$};
  \draw[->, thick] (0, 0) -- (0, 9.0)  node[above] {value $a_i$};

  %% ===== Block / interval labels =====
  \node[below] at (1.5,  -0.15) {\scriptsize $J_0$};
  \node[below] at (4.5,  -0.15) {\scriptsize $J_1$};
  \node[below] at (7.5,  -0.15) {\scriptsize $J_2$};
  \node[below] at (11,   -0.15) {\scriptsize $\cdots$};
  \node[below] at (14.5, -0.15) {\scriptsize $J_{R-3}$};
  \node[below] at (17.5, -0.15) {\scriptsize $J_{R-2}$};
  \node[below] at (20.5, -0.15) {\scriptsize $J_{R-1}$};

  \node[left] at (-0.15, 0.5) {\scriptsize $I_0$};
  \node[left] at (-0.15, 1.5) {\scriptsize $I_1$};
  \node[left] at (-0.15, 2.5) {\scriptsize $I_2$};
  \node[left] at (-0.15, 4)   {\scriptsize $\vdots$};
  \node[left] at (-0.15, 5.5) {\scriptsize $I_{R-3}$};
  \node[left] at (-0.15, 6.5) {\scriptsize $I_{R-2}$};
  \node[left] at (-0.15, 7.5) {\scriptsize $I_{R-1}$};

  %% ===== Axis tick labels =====
  \node[below] at (0,  -0.7) {\scriptsize $1$};
  \node[below] at (22, -0.7) {\scriptsize $N$};
  \node[left]  at (-0.7, 0)  {\scriptsize $0$};
  \node[left]  at (-0.7, 8)  {\scriptsize $1$};

  %% ===== Black dots: varied counts per block (illustrate L_r values 2, 3, 4) =====
  % r=0: 4 dots, all increasing → L_0 = 4
  \fill (0.4, 0.25) circle (4pt); \fill (1.0, 0.45) circle (4pt);
  \fill (1.6, 0.65) circle (4pt); \fill (2.4, 0.85) circle (4pt);
  % r=1: 3 dots, increasing → L_1 = 3
  \fill (3.5, 1.3) circle (4pt); \fill (4.4, 1.55) circle (4pt); \fill (5.5, 1.75) circle (4pt);
  % r=2: 4 dots with one out-of-order → L_2 = 3
  \fill (6.4, 2.3) circle (4pt); \fill (7.0, 2.6) circle (4pt);
  \fill (7.5, 2.4) circle (4pt); \fill (8.5, 2.85) circle (4pt);
  % r=R-3: 5 dots with one out-of-order → L_{R-3} = 4
  \fill (13.3, 5.2) circle (4pt); \fill (13.9, 5.5) circle (4pt);
  \fill (14.5, 5.4) circle (4pt); \fill (15.1, 5.7) circle (4pt); \fill (15.7, 5.85) circle (4pt);
  % r=R-2: 3 dots, mostly decreasing → L_{R-2} = 2
  \fill (16.5, 6.7) circle (4pt); \fill (17.5, 6.4) circle (4pt); \fill (18.4, 6.55) circle (4pt);
  % r=R-1: 4 dots, all increasing → L_{R-1} = 4
  \fill (19.4, 7.2) circle (4pt); \fill (20.0, 7.4) circle (4pt);
  \fill (20.8, 7.55) circle (4pt); \fill (21.6, 7.85) circle (4pt);

  %% ===== Off-diagonal sparse gray dots =====
  \foreach \x/\y in {
    1.0/3.5, 2.0/6.5, 1.7/7.4,
    4.0/0.4, 5.0/2.5, 4.3/6.7,
    6.5/0.7, 8.0/5.7, 7.0/7.4,
    14.0/0.5, 15.5/2.3, 14.5/3.7, 13.7/6.5,
    16.5/0.6, 17.5/2.7, 18.0/5.4,
    19.5/2.5, 20.5/0.5, 21.0/5.6,
    1.0/2.7, 5.0/6.0, 7.5/0.4
  } { \fill[black!35] (\x, \y) circle (2pt); }

  %% ===== L_r connectors (LIS path within each visible diagonal block) =====
  \draw[thick] (0.4, 0.25) -- (1.0, 0.45) -- (1.6, 0.65) -- (2.4, 0.85);    % L_0=4
  \draw[thick] (3.5, 1.3)  -- (4.4, 1.55) -- (5.5, 1.75);                   % L_1=3
  \draw[thick] (6.4, 2.3)  -- (7.0, 2.6)  -- (8.5, 2.85);                   % L_2=3, skips (7.5, 2.4)
  \draw[thick] (13.3, 5.2) -- (13.9, 5.5) -- (15.1, 5.7) -- (15.7, 5.85);   % L_{R-3}=4, skips (14.5, 5.4)
  \draw[thick] (17.5, 6.4) -- (18.4, 6.55);                                 % L_{R-2}=2
  \draw[thick] (19.4, 7.2) -- (20.0, 7.4) -- (20.8, 7.55) -- (21.6, 7.85);  % L_{R-1}=4

  %% ===== L_r labels (above the path inside each block) =====
  \node[anchor=south, font=\scriptsize] at (1.5,  0.95) {$L_0\!=\!4$};
  \node[anchor=south, font=\scriptsize] at (4.5,  1.85) {$L_1\!=\!3$};
  \node[anchor=south, font=\scriptsize] at (7.5,  2.95) {$L_2\!=\!3$};
  \node[anchor=south, font=\scriptsize] at (14.5, 5.95) {$L_{R\!-\!3}\!=\!4$};
  \node[anchor=south, font=\scriptsize] at (17.5, 6.85) {$L_{R\!-\!2}\!=\!2$};
  \node[anchor=south, font=\scriptsize] at (20.5, 7.95) {$L_{R\!-\!1}\!=\!4$};

\end{tikzpicture}
\caption{Mechanism of the partition from Proposition~\ref{prop:decomp}. The horizontal axis is the time index $i$ (typically very long, since $N$ is large) and the vertical axis is the value $a_i \in [0,1)$. The index range is split into $R$ equal blocks $J_0, \ldots, J_{R-1}$ and the value range into $R$ equal $\succ$-intervals $I_0, \ldots, I_{R-1}$; the $R$ diagonal blocks $J_r \times I_r$ (shaded) are the ones we keep. Black dots mark positions $i$ with $a_i \in I_r$ inside the diagonal block, gray dots mark off-diagonal positions and are discarded. The solid path inside each diagonal block is a $\succ$-LIS of length $L_r$.}
\label{fig:partition}
\end{figure}

%% file: fig_word_partition.tex
%% --- Figure: word partition for P = 2, Q = 4 ---
\begin{figure}[htbp]
\centering
\begin{tikzpicture}[scale=0.65, every node/.style={font=\footnotesize}]

  %% ===== [0, 1) partition bar =====
  \fill[black!28] (0, 0) rectangle (1, 1);
  \fill[black!5]  (1, 0) rectangle (2, 1);
  \fill[black!5] (2, 0) rectangle (4, 1);
  \fill[black!5]  (4, 0) rectangle (8, 1);
  \fill[black!5] (8, 0) rectangle (12, 1);
  \fill[black!5]  (12, 0) rectangle (14, 1);
  \fill[black!5] (14, 0) rectangle (15, 1);
  \fill[black!28] (15, 0) rectangle (16, 1);

  \draw[very thick] (0, 0) rectangle (16, 1);
  \foreach \x in {1, 2, 4, 8, 12, 14, 15} {
    \draw[thick] (\x, 0) -- (\x, 1);
  }

  \node[font=\scriptsize] at (0.5, 0.5)  {$0000$};
  \node[font=\scriptsize] at (1.5, 0.5)  {$0001$};
  \node[font=\scriptsize] at (3, 0.5)    {$001$};
  \node[font=\scriptsize] at (6, 0.5)    {$01$};
  \node[font=\scriptsize] at (10, 0.5)   {$10$};
  \node[font=\scriptsize] at (13, 0.5)   {$110$};
  \node[font=\scriptsize] at (14.5, 0.5) {$1110$};
  \node[font=\scriptsize] at (15.5, 0.5) {$1111$};

  \foreach \x/\l in {0/$0$, 1/{$\tfrac{1}{16}$}, 2/{$\tfrac{1}{8}$}, 4/{$\tfrac{1}{4}$}, 8/{$\tfrac{1}{2}$}, 12/{$\tfrac{3}{4}$}, 14/{$\tfrac{7}{8}$}, 15/{$\tfrac{15}{16}$}, 16/$1$} {
    \node[font=\tiny, below] at (\x, -0.05) {\l};
  }

  %% ===== Width annotations =====
  \draw[<->, thin] (4, -0.7) -- (8, -0.7) node[midway, below, font=\tiny] {$|C_{01}| = 2^{-2}$};
  \draw[<->, thin] (2, -0.7) -- (4, -0.7) node[midway, below, font=\tiny] {$2^{-3}$};
  \draw[<->, thin] (0, -0.7) -- (1, -0.7) node[midway, below, font=\tiny] {$2^{-4}$};

\end{tikzpicture}
\caption{Partition of $[0,1)$ by words $w \in {\mathcal{W}}$ for $P = 2$, $Q = 4$. The interval $[0,1)$ is split into 8 intervals $C_w$, with the width of $C_w$ equal to $2^{-\len(w)}$. Light cells are words in ${\mathcal W}_{\text{regular}}$; the two dark cells $C_{0000}$ and $C_{1111}$ correspond to $\mathcal{W}_{\text{exceptional}}$, the words forced to terminate at length $Q = 4$ because their period-1 pattern persisted.}
\label{fig:word_partition}
\end{figure}

%% file: fig_word_partition2.tex
%% --- Figure: P = 3, Q = 6 partition ---
\begin{figure}[htbp]
\centering
\begin{tikzpicture}[>=Stealth, scale=0.85, every node/.style={font=\footnotesize}]
  \pgfmathsetmacro{\sc}{14/64}

  %% ===== Full [0, 1) bar =====
  \foreach \s/\e/\tp in {
    0/1/1, 1/2/0, 2/4/0, 4/8/0, 8/16/0,
    16/20/0, 20/21/0, 21/22/1, 22/24/0, 24/32/0,
    32/40/0, 40/42/0, 42/43/1, 43/44/0, 44/48/0,
    48/56/0, 56/60/0, 60/62/0, 62/63/0, 63/64/1
  } {
    \pgfmathsetmacro{\xl}{\s * \sc}
    \pgfmathsetmacro{\xr}{\e * \sc}
    \ifnum\tp=1
      \fill[black!30] (\xl, 0) rectangle (\xr, 1);
    \else
      \fill[black!7]  (\xl, 0) rectangle (\xr, 1);
    \fi
  }
  \draw[very thick] (0, 0) rectangle ({64*\sc}, 1);
  \foreach \x in {1,2,4,8,16,20,21,22,24,32,40,42,43,44,48,56,60,62,63} {
    \draw[thin] ({\x*\sc}, 0) -- ({\x*\sc}, 1);
  }

  %% In-bar labels (wide enough cells)
  \node at ({12*\sc}, 0.5) {\scriptsize $001$};
  \node at ({28*\sc}, 0.5) {\scriptsize $011$};
  \node at ({36*\sc}, 0.5) {\scriptsize $100$};
  \node at ({52*\sc}, 0.5) {\scriptsize $110$};
  \node at ({6*\sc},  0.5) {\tiny $0001$};
  \node at ({18*\sc}, 0.5) {\tiny $0100$};
  \node at ({46*\sc}, 0.5) {\tiny $1011$};
  \node at ({58*\sc}, 0.5) {\tiny $1110$};

  %% Tick labels below
  \foreach \x/\l in {0/$0$, 8/{$\tfrac{1}{8}$}, 16/{$\tfrac{1}{4}$}, 24/{$\tfrac{3}{8}$}, 32/{$\tfrac{1}{2}$}, 40/{$\tfrac{5}{8}$}, 48/{$\tfrac{3}{4}$}, 56/{$\tfrac{7}{8}$}, 64/$1$} {
    \node[font=\tiny, below=-1pt] at ({\x*\sc}, 0) {\l};
  }

  %% ===== Combined annotations: orbit/fixed point + corresponding exceptional word =====
  \draw[->, thick] ({0.5*\sc},  1.9) -- ({0.5*\sc},  1.05);
  \node[font=\scriptsize, anchor=south, align=center] at ({0.5*\sc},  1.9)
    {\tiny exc.\ cell $C_{000000}$};

  \draw[->, thick] ({21.33*\sc}, 1.9) -- ({21.5*\sc}, 1.05);
  \node[font=\scriptsize, anchor=south, align=center] at ({21.33*\sc}, 1.9)
    {\tiny exc.\ cell $C_{010101}$};

  \draw[->, thick] ({42.67*\sc}, 1.9) -- ({42.5*\sc}, 1.05);
  \node[font=\scriptsize, anchor=south, align=center] at ({42.67*\sc}, 1.9)
    {\tiny exc.\ cell $C_{101010}$};

  \draw[->, thick] ({63.5*\sc}, 1.9) -- ({63.5*\sc}, 1.05);
  \node[font=\scriptsize, anchor=south, align=center] at ({63.5*\sc}, 1.9)
    {\tiny exc.\ cell $C_{111111}$};
\end{tikzpicture}
\caption{Partition for $P = 3$, $Q = 6$. $[0,1)$ is split into 20 intervals $C_w$. The four dark cells are the exceptional words $\{000000, 010101, 101010, 111111\}$, located at the orbits of the short-period points of the doubling map: the fixed points $0, 1$ and the period-2 orbit $\{\tfrac{1}{3}, \tfrac{2}{3}\}$. The partition becomes finer in their neighbourhoods.}
\label{fig:word_partition2}
\end{figure}